\documentclass[11pt,reqno]{article}

\usepackage{times}
\usepackage{amsmath,amsfonts,amstext,amssymb,amsbsy,amsopn,amsthm,eucal}
\usepackage{txfonts}
\usepackage{dsfont}
\usepackage{graphicx}   
\usepackage{hyperref}
\usepackage{color}
\usepackage{verbatim}
\usepackage{enumerate}
\usepackage{tocbibind}

\numberwithin{equation}{section}

\hypersetup{
  pdftitle={Singular set of approximate harmonic maps},
  pdfauthor={Aaron Naber and Daniele Valtorta},
  pdfsubject={},
  pdfkeywords={},
  pdfpagelayout=SinglePage,
  pdfpagemode=UseOutlines,
  colorlinks,
  bookmarksopen,
  linkcolor=[rgb]{0,0,0.7},
  urlcolor=[rgb]{0,0,0.4},
  citecolor=[rgb]{0.4,0.1,0}
}

% FULL SCREEN MEASURES
 \setlength{\textheight}{10in} \setlength{\textwidth}{7.5in}
 \setlength{\columnsep}{0.5in} \setlength{\topmargin}{-1.2in}
 \setlength{\headheight}{0in} \setlength{\headsep}{0.5in}
 \setlength{\parindent}{1pc}
 \setlength{\oddsidemargin}{-0.5in}  
 \setlength{\evensidemargin}{-0.5in}

%STANDARD MEASURES 
% \setlength{\textheight}{8.50in} \setlength{\textwidth}{6.5in}
% \setlength{\columnsep}{0.5in} \setlength{\topmargin}{0.0in}
% \setlength{\headheight}{0in} \setlength{\headsep}{0.5in}
% \setlength{\parindent}{1pc}
% \setlength{\oddsidemargin}{0in}  
% \setlength{\evensidemargin}{0in}

\newcommand{\Vol}{\text{Vol}}
\newcommand{\diam}{\text{diam}}

\newcommand{\inj}{\text{inj}}
\newcommand{\Sing}{\cS}

\newcommand{\dR}{\mathds{R}}

\newcommand{\cC}{\mathcal{C}}

\newcommand{\cF}{\mathcal{F}}

\newcommand{\cH}{\mathcal{H}}

\newcommand{\cS}{\mathcal{S}}

\newcommand{\norm}[1]{\left\|#1\right\|}
\newcommand{\ps}[2]{\left\langle#1,#2\right\rangle}
\newcommand{\ton}[1]{\left(#1\right)}
\newcommand{\qua}[1]{\left[#1\right]}
\newcommand{\cur}[1]{\left\{#1\right\}}
\newcommand{\abs}[1]{\left|#1\right|}
\newcommand{\wto}{\rightharpoonup ^*}

\newcommand{\B}[2]{B_{#1}\ton{#2}}

\newcommand{\supp}[1]{\operatorname{supp}\ton{#1}}

\newcommand{\N}{\mathds{N}}

\newcommand{\R}{\mathds{R}}

\renewcommand{\paragraph}[1]{\ \newline \ \textbf{#1\ }}

\newcommand{\hol}{H\"older }
\newcommand{\al}{Ahlfors }

\newcommand{\KN}{N} %USED TO DEFINE K_N. we can easily switch from dependence from N to depencende from K_N

\newtheorem{theorem}{Theorem}[section]

\newtheorem{proposition}[theorem]{Proposition}
\newtheorem{lemma}[theorem]{Lemma}
\newtheorem{corollary}[theorem]{Corollary}
\theoremstyle{definition}
\newtheorem{definition}[theorem]{Definition}
\theoremstyle{remark}
\newtheorem{remark}{Remark}[section]
\theoremstyle{remark}
\newtheorem{example}{Example}[section]
\theoremstyle{remark}

\theoremstyle{remark}

\theoremstyle{remark}

%%% ----------------------------------------------------------------------
\begin{document}

\title{Stratification for the singular set of approximate harmonic maps}
\author{Aaron Naber and Daniele Valtorta}%\thanks{The first author has been supported by NSF grant DMS-1406259, the second author has been supported by SNSF grant 149539}

\date{\today}

\maketitle

\begin{abstract}
The aim of this note is to extend the results in \cite{nava+} to the case of approximate harmonic maps.  More precisely, we will proved that the singular strata $\cS^k(u)$ of an approximate harmonic map are $k$-rectifiable, and we will show effect bounds on the quantitative strata.  In the process we will simplify many of the arguments from \cite{nava+}, and in particular we produce a new main covering lemmas which vastly simplifies the older argument.
\end{abstract}

\tableofcontents

\section{Introduction}

In this note, we will work with maps between two compact Riemannian manifolds $M$ and $N$ such that 
\begin{align}\label{eq_MNprop}
&|\sec_{B_2(p)}|\leq K_M,\,\, \inj(B_2(p))\geq K_M^{-1}\, ,\notag\\
&\partial N = \emptyset ,\ \ |\sec_N|\leq K_N,\,\, \inj(N)\geq K_N^{-1},\,\,\diam(N)\leq K_N\, ,\\
&\dim(M)=m,\, \dim(N)\leq n\, .\notag 
\end{align}
Moreover, we will always assume that $N$ is isometrically embedded into $\R^n$.

A harmonic map $u\in H^1(M,N)$ is a critical point of the Dirichlet energy for fixed boundary values. This map satisfies the Euler-Lagrange equation
\begin{gather}
 \Delta u+A(u)(\nabla u,\nabla u)=0\, ,
\end{gather}
where $A$ is the second fundamental form of $N\subset \R^n$. An important object in the study of harmonic maps is the normalized energy
\begin{gather}
 \theta(x,r)=r^{2-m}\int_{\B r x} \abs{\nabla u}^2\, .
\end{gather}
This quantity turns out to be (almost) monotone for \textit{stationary} harmonic maps. Combined with an $\epsilon$-regularity theorem, this immediately yields the partial regularity of such maps.

Loosely speaking, approximate harmonic maps are solutions to
\begin{gather}
 \Delta u+A(u)(\nabla u,\nabla u)=f\, ,
\end{gather}
where $f$ in an $L^2$ function. Such a map is called \textit{stationary} if it also satisfies 
\begin{gather}
  \nabla^i\ton{\abs{\nabla u}^2 g_{ij} -2 \ps{\nabla_i u}{\nabla_j u}}+2\ps{\nabla_j u}{f}=0\, .
\end{gather}

As for harmonic maps, one defines the singular set of the map $u$ as the set of points in the domain which are not continuous:
\begin{gather}
 \Sing(u)=\cur{x\in M \ \ s.t. \ \ \exists r>0 \ \ s.t. \ \ u|_{\B r x} \ \ \text{ is continuous }}^C\, .
\end{gather}
By elliptic regularity, it is easy to see that around a regular point almost harmonic maps are $C^{0,\alpha}$ continuous, where $\alpha=\alpha(m,K_M,\KN,p)>0$, but in general higher regularity depends on the regularity of $f$.

As for standard harmonic maps, one can define \textit{stationary} almost harmonic maps and prove that the normalized energy $\theta(x,\cdot)$ is almost monotone, in the sense that has bounded variation at every point.
Using only this property, and an $\epsilon$-regularity theorem, one shows that $\Sing(u)$ has zero $m-2$-Hausdorff measure and that each of these maps has a (possibly non-unique) tangent map at every point.

By looking at the symmetries of these tangents, see Section \ref{ss:stratification} for precision, one can define a stratification $\cS^k(u)$ for $\cS(u)$ by setting
\begin{gather}
 \cS^k(u)=\cur{x\in \B 1 0 \ \ s.t. \ \ \text{no tangent map at} \ x \ \text{is k-symmetric}}\, .
\end{gather}
The aim of this note is to prove that the results obtained in \cite{nava+} for stationary harmonic maps continue to hold in this setting, and in particular the strata $\cS^k(u)$ are all $k$-rectifiable.  Our full collection of main theorems is listed in Section \ref{s:main_theorems},  after some preliminaries are introduced, however we begin by stating the main structure result for the stratification itself:

\begin{theorem}
Let $u:B_3(p)\subseteq M\to N$ be a stationary approximately harmonic map, where $M$ and $N$ satisfy \eqref{eq_MNprop}, and $f\in L^p$ for $p>\frac{m}{2}$.  Then for each $k$ we have that $S^k(u)$ is $k$-rectifiable.
%If there exists some $\gamma>0$ such that for all $\B r x \subseteq \B 3 p$, $r^{4-m}\int_{\B r x } \abs f^2 \leq C r^\gamma$, then for each $k$ we have that $S^k(u)$ is $k$-rectifiable.
\end{theorem}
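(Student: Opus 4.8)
The strategy follows the now-standard scheme of quantitative stratification plus Reifenberg-type rectifiability, as developed in \cite{nava+} for stationary harmonic maps, adapted to accommodate the error term $f\in L^p$, $p>\tfrac m2$. First I would set up the appropriate monotone (up to error) quantity: for a stationary approximate harmonic map the normalized energy $\theta(x,r)=r^{2-m}\int_{B_r(x)}|\nabla u|^2$ satisfies a monotonicity formula with an error controlled by $\|f\|_{L^p}$ and a power of $r$ (using $p>\tfrac m2$ to get an integrable defect), so $\theta(x,\cdot)$ has bounded variation and a well-defined limit $\theta(x,0^+)$. This gives the almost-monotonicity and the existence of tangent maps, which are homogeneous harmonic maps (with vanishing error in the blow-up limit, since the rescalings of $f$ tend to $0$ in the relevant norm). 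Define the quantitative strata $\cS^k_{\epsilon,r}(u)$ as the points $x$ where $u$ is not $(\epsilon,k+1)$-symmetric on any scale $s\in[r,1]$, and $\cS^k_\epsilon=\bigcap_r \cS^k_{\epsilon,r}$, with $\cS^k=\bigcup_\epsilon \cS^k_\epsilon$.

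\medskip
The heart of the argument is the main covering/rectifiable-Reifenberg lemma. The key geometric input is a quantitative rigidity statement: if $u$ is sufficiently energy-pinched between scales, $\theta(x,r_2)-\theta(x,r_1)<\delta$ (plus the $f$-error), then $u$ is $\delta'$-close to being $0$-symmetric on that range of scales, and more refined, the failure of higher symmetry is detected by an $L^2$-best-approximation estimate: the ``$k$-dimensional part'' of the pinched set is well-approximated by $k$-planes in the Jones-type $\beta$-number sense, with the $\beta$-numbers controlled by the energy drop. Concretely one proves a bound of the form $\int_0^1 \int_A \beta^k_{A,\mu}(x,r)^2 \, d\mu(x)\, \tfrac{dr}{r} \le C\ton{\sup\theta - \inf\theta + \|f\|_{L^p}^{\text{power}}}$ for the relevant measure $\mu=\cH^k\llcorner A$ on the pinched set $A$. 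Feeding this into the rectifiable-Reifenberg theorem of Naber--Valtorta yields that $A$ is $k$-rectifiable with uniformly bounded $\cH^k$-measure; a covering argument over the countable union of pinched sets then gives that all of $\cS^k_\epsilon$, hence $\cS^k$, is $k$-rectifiable.

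\medskip
I would organize the write-up as: (1) monotonicity with $L^p$ error and compactness/tangent map existence; (2) the $\epsilon$-regularity theorem and the cone-splitting lemma, giving that $(0,k+1)$-symmetry implies effective $k$-symmetry and that points with two $0$-symmetric but non-collinear blow-ups lie on a $k$-plane; (3) the quantitative $L^2$-subspace approximation / $\beta$-number estimate tied to the energy drop — this is where the new simplified covering lemma advertised in the abstract enters, replacing the more elaborate tree construction of \cite{nava+}; (4) application of rectifiable Reifenberg; (5) the covering/induction on scales assembling the local statements into the global rectifiability of $\cS^k(u)$.

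\medskip
\textbf{Main obstacle.} The principal difficulty is controlling the error term $f$ throughout. Unlike the stationary harmonic case, blow-ups and rescalings must carry along the rescaled $f$, and one must check that (a) the monotonicity defect remains summable over dyadic scales — this is exactly where $p>\tfrac m2$ is used, since the natural scaling of $\|f\|_{L^p}$ on $B_r$ produces a factor $r^{2-\frac mp}$ with positive exponent — and (b) the $L^2$-best-approximation/$\beta$-number estimate, which in the clean case comes from differentiating the monotonicity formula and integrating $|x-\text{center}|\cdot|\partial_r(u_r)|^2$-type quantities, now acquires $f$-dependent terms that must be absorbed into the right-hand side without destroying the telescoping structure. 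Getting these error contributions into a single controllable quantity — and verifying that the new covering lemma is robust enough to tolerate them — is the technical crux; everything else is a (careful) adaptation of \cite{nava+}.
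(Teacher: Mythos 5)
Your proposal matches the paper's strategy essentially step for step: almost-monotonicity with error governed by the exponent $\gamma=2-\frac{m}{p}>0$, $\epsilon$-regularity, quantitative cone-splitting, the $L^2$-best-subspace / $\beta_2$ estimate driven by the energy drop, the simplified two-stage covering induction, discrete Reifenberg for volume bounds, and rectifiable Reifenberg plus a density argument for rectifiability. The one organizational difference worth flagging is that the paper does not carry the $f$-error as an additive defect in $\theta$; instead it defines a corrected quantity $\hat\theta(x,r)$ (by adding explicit $f$-dependent integrals to $\theta$) which is \emph{genuinely} monotone with the clean identity $\hat\theta(x,r)-\hat\theta(x,s)\ge\int_{B_r\setminus B_s}\frac{|(y-x)\cdot\nabla u|^2}{|y-x|^m}$, so all pinching, cone-splitting, and $\beta_2$ arguments run with no error terms at all — the $L^p$ hypothesis is used only once, to show $\hat\theta$ is finite and comparable to $\theta$ up to $\cF r^\gamma$. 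Your version with an almost-monotone $\theta$ would work in principle but would force you to carry and re-absorb the dyadic error in every telescoping sum, which is precisely the bookkeeping the paper's $\hat\theta$ is designed to eliminate.
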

% \textcolor{red}{Note: I changed the above just to be more readable at this stage, but mentioned below that we make a weaker assumption later.  I dont feel that strongly, feel free to change it back.}

In fact, in Section \ref{s:main_theorems} we will weaken slightly the assumptions on $f$.  Moreover, we will also prove in Section \ref{s:main_theorems} uniform volume bounds for the \textit{quantitative} strata $\cS^k_{\epsilon,r}$, which will be introduced in the next section.\\

There are three key elements in the proof of these estimates: an estimate on the $\beta_2$-Jones' numbers for the singular strata derived using the monotonicity formula and the $L^2$-best subspace approximation theorem, a Reifenberg-type theorem which allows us to turn these estimates into volume bounds and rectifiability for the singular strata, and an inductive covering argument which guarantees the applicability of this Reifenberg theorem on the strata. While all the results and their proofs are essentially the same as in \cite{nava+}, albeit done in slightly more generality, it is worth noticing that the proof inductive covering argument has been simplified here.

\subsection{Domains with curvature}
For the sake of simplicity, throughout the rest of this note we will assume that $M=\Omega\subseteq \R^m$. Assuming that the domain $M$ is flat simplifies the technicalities involved in the computations, but involves no fundamental changes in comparison to the general case - primarily, one obtains errors from the almost aspect of the monotonicity formula which simply need to be accounted for in a standard manner.  Thus, all the results described in this note carry over to the general case, up to minor technical differences. In particular, all the constants involved in the bounds would also depend on the manifold $M$, and actually just on a lower bound on its injectivity radius and an upper bound on the sectional curvature. Moreover, the normalized energy $\theta$, and its adapted version for the almost harmonic case $\hat \theta$, would need to be slightly changed as described in Remark \ref{rem_curv}.

\subsection{Quantitative stratification for general maps}\label{ss:stratification}
In this section, we introduce in detail the quantitative stratification of the singular set $\cS(u)$. As mentioned in the introduction, the idea behind the quantitative stratification is to group all points in the domain of the map $u$ according to the number of approximate symmetries of $u$ at some scale, as opposed to the standard stratification which looks at the exact symmetries of the set of tangent maps.

A first version of the quantitative stratification can be found in Almgren's big regularity paper (see \cite[section 2.25]{almgren_big}), and it was reintroduced in \cite{ChNa1,ChNa2} to provide the first effective regularity results. In particular, in \cite{ChNa1,ChNa2} the authors use the quantitative stratification to prove estimates on noncollapsed manifolds with Ricci curvature bounds, and on the singular set of stationary and minimizing harmonic maps. As a corollary of the estimates, in \cite{ChNa2} the authors obtain uniform $W^{1,p}$ bounds for minimizing harmonic maps for all $p<3$.  This technique has since been used in \cite{ChNaHa1}, \cite{ChNaHa2}, \cite{ChNaVa}, \cite{FoMaSpa}, \cite{brelamm} to prove similar results in the areas of mean curvature flow, critical sets of elliptic equations, harmonic map flow, and biharmonic maps.

A significant improvement on these techniques has been obtained in \cite{nava+}, where the authors use sharp estimates on the Jones' $\beta_2$ numbers (called \textit{distortion} in \cite{nava+}) in terms of the normalized energy to prove sharp volume bounds and rectifiability for the singular strata of harmonic maps.  As mentioned in the introduction, the aim of this note is to extend these results to the case of approximately harmonic maps, but perhaps more importantly to give somewhat simplified proofs of the main arguments of \cite{nava+}.

\vspace{1mm}

In order to define precisely the quantitative stratification, we need to introduce the concept of $k$-symmetric maps.

\begin{definition}
Given a map $h\in H^1(\R^m,N)$, we say that
\begin{enumerate}
 \item $h$ is homogeneous wrt the point $p$ if $h(p+\lambda v) = h(p+v)$ for all $\lambda>0$ and $v\in \R^m$. Equivalently, $\partial_{r_p} h =0$, where $r_p$ is the radial direction wrt $p$,
 \item $h$ is $k$-symmetric if it is homogeneous wrt the origin and it has an invariant $k$-subspace, i.e., if there exists a linear subspace $V\subseteq \R^m$ of dimension $k$ such that
 \begin{gather}
  h(x+v)=h(x) \ \ \forall x\in \R^m \ \ \forall v\in V\, .
 \end{gather}
\end{enumerate}
\end{definition}
As a notation, we say that $h$ is $0$-symmetric iff it is homogeneous wrt the origin. In the definition, we insist that a $k$-symmetric map be \textit{both homogeneous and $k$-invariant}.

\begin{example}
It is very easy to produce examples of these maps by taking maps defined on $S^{m-1}$ and extending them by homogeneity on the whole $\R^m$. Note that necessarily if a map is homogeneous, then it is continuous only if it is constant.

The first nonconstant explicit example is the map $h:\R^m \to S^{m-1}$ given by
\begin{gather}
 h(x)= \frac{x}{\abs x}\, ,
\end{gather}
which is a $H^1$ map for $m\geq 3$ (and actually a \textit{minimizing} harmonic map in these cases, see \cite{lin_min,corgul}). We can easily build from this example a $k$-symmetric map by defining $g:\R^m\times \R^k \to S^{m-1}$ as $g(x,y)=h(x)$.
\end{example}

The quantitative stratification is based on how close a map is to some $k$-symmetric map at different scales. 
\begin{definition}
 Given a map $g\in H^1(\Omega, N)$, we say that $\B r x \subset \Omega$ is $(k,\epsilon)$-symmetric for $g$ if there exists some $k$-symmetric function $h$ such that
 \begin{gather}
  \fint_{\B r x} \abs{g(y)-h(y-x)}^2\equiv \frac{1}{\omega_m r^m}\int_{\B r x} \abs{g(y)-h(y-x)}^2 \leq \epsilon\, . 
 \end{gather}
\end{definition}
An equivalent definition may be given in terms of the blow-up map $T^g_{x,r}$, defined by
\begin{gather}
 T^g_{x,r} (y)= g(x+ry)\, .
\end{gather}
Indeed, since $h$ is homogeneous by assumption, we have
\begin{gather}
 \fint_{\B r x} \abs{g(y)-h(y-x)}^2=\fint_{\B 1 0 } \abs{T^g_{x,r}(y)-h(y)}^2\, .
\end{gather}

Given this definition of approximate symmetry, we can define the quantitative stratification by classifying the points in the domain according to how close they look at different scales to something which is $k$-symmetric.

\begin{definition}\label{d:stratification}
Given $u\in H^1(\Omega,N)$, $r,\epsilon>0$ and $k\in \cur{0,1,\cdots,m}$, we define
\begin{gather}
 \cS^k_{\epsilon,r}(u)\equiv \cur{x\in \Omega \ \ s.t. \ \ \text{ for no }r\leq s<1, \ \text{$\B{s}{x}$ is } \ (k+1,\epsilon)\text{-symmetric wrt} \ u}\, .
\end{gather}
\end{definition}
Note that these sets have some immediate inclusion properties coming from their definition. In particular, if $k'\leq k$, $\epsilon'\geq \epsilon$ and $r'\leq r$, we have
\begin{gather}
 \cS^{k'}_{\epsilon',r'}(u)\subseteq \cS^k_{\epsilon,r}(u)\, .
\end{gather}

Given this, one can construct the sets
\begin{gather}
 \cS^k_\epsilon(u) = \bigcap_{r>0} \cS^k_{\epsilon,r}(u)\, , \quad \cS^k(u)=\bigcup_{\epsilon>0} \cS^k_{\epsilon}(u)\, .
\end{gather}

\begin{remark}
Note that, for an approximate harmonic map, the set $\cS^k(u)$ has another characterization:
\begin{gather}
 \cS^k(u)= \cur{x\in \Omega \ \ s.t. \ \ \text{no tangent maps at } x \text{ is } k+1-\text{symmetric}}\, .
\end{gather}
For a precise statement, see Lemma \ref{lemma_Sk}.
\end{remark}

\begin{example}
 It is interesting to study these sets with an example. In particular, consider the map $g:\R^3\to S^{2}$ given by $g(x)=x/\abs x$. Since the map $g$ is $0$-homogeneous but not $1$-symmetric, it is clear that $0\in \cS^0_{\epsilon,r}$ for all $r$ and $\epsilon< \epsilon_0$ sufficiently small. In particular, $\epsilon_0$ can be taken to be the $L^2(\B 1 0)$ distance from $g$ to all $1$-symmetric maps $h$, which is easily seen to be positive.
 
 Now given any $x\in \R^n\setminus \{0\}$, since $g$ is continuous at $x$, for all $\epsilon>0$ there exists a radius $r(\epsilon)$ such that 
 \begin{gather}
  \fint_{\B {r\abs x} x} \abs{g(y)-g(x)}^2\leq \epsilon\, .
 \end{gather}
Thus every point $x\neq 0$ will eventually be almost $3$-symmetric, if we consider a small enough radius.  However, for $r$ sufficiently big, $g|_{\B r x}$ and $g|_{\B r 0}$ are close in the $L^2$ norm. Thus, for $r\geq s(\abs x,\epsilon)$ sufficiently large we have that $x$ will belong to $\cS^0_{\epsilon,r}$.  Summing up, we obtain that
\begin{gather}
 \cS^0_{\epsilon,r}(g)= \overline{\B {s(r,\epsilon)}{0}}\,  \quad \text{ where } \ \ s(r,\epsilon) \ \ \text{ is increasing in $r$ and decreasing in $\epsilon$}\, .
\end{gather}
Moreover, for $\epsilon<\epsilon_0$, we have $s(0,\epsilon)=\lim_{r\to 0} s(0,\epsilon)=0$.  Up to different constants, the sets $\cS^1_{\epsilon,r}(g)$ and $\cS^2_{\epsilon,r}(g)$ also behave in a similar way, while evidently $\cS^3_{\epsilon,r}(g)=\R^3$ (but this last statement has clearly no meaning).
\end{example}

\section{Approximate harmonic maps: definition and monotonicity}
Before moving to approximate harmonic maps, we briefly recall the most important aspects of the regularity theory for harmonic maps.

\subsection{Stationary harmonic maps: regularity}

Loosely speaking, harmonic maps are critical points of the Dirichlet energy, in particular with respect to compact variations in the target space and in the domain space, in the case of stationary harmonic maps. The definition of these object is standard in literature, here we briefly recall it.

Given a compact $C^2$ Riemannian manifold $N$, we define the Sobolev space $H^1(M,N)$ by isometrically embedding $N$ into a Euclidean space $\R^n$, and considering
\begin{gather}
 H^1(\Omega,N)=\cur{ \ \ f\in H^1(\Omega,\R^n) \ \ s.t. \ \ \text{for a.e.} \ x\in M\, , \ f(x)\in N }\, .
\end{gather}

Let $\Omega\subset \R^m$ be a domain. A stationary harmonic map $u\in H^1(\Omega,N)$ is a critical point for this Dirichlet energy. In particular, let $v:M\to \R^n$ and $w:\Omega\to \R^m$ be smooth vector fields, both with compact support in $\Omega$. Then if $u$ is a stationary harmonic map we have
\begin{gather}
 \left.\frac{d}{dt}\right\vert_{t_0} \int_{\Omega} \abs{\nabla \ton{\Pi_N(u+tv)}}^2=0 \quad \text{and}\quad \left.\frac{d}{dt}\right\vert_{t_0} \int_{\Omega} \abs{\nabla\ton{u(x+tw)}}^2=0 \, ,
\end{gather}
where $\Pi_N$ is the nearest point projection from $\R^n$ onto $N$, a map which is well-defined and $C^1$ on a small neighborhood of $N$. 

By standard computations (see for example \cite{xin,moser_appr_harm}), the first condition gives the Euler-Lagrange equation
\begin{gather}\label{eq_Au}
 \Delta u +A(u)(\nabla u,\nabla u)=0\, .
\end{gather}
This is an equation satisfied by $u:\Omega \to \R^n$ in the weak $H^1$ sense, and $A(u)$ is the second fundamental form of the embedding $N \hookrightarrow \R^n$ evaluated at $u(x)$. Maps satisfying only this equation are called \textit{weakly harmonic}.

The second condition gives rise to a separate Euler-Lagrange equation. In particular, we see that for all $i=1,\cdots,m$ we have
\begin{gather}\label{eq_stat_short}
 \operatorname{div_j}\ton{\abs{\nabla u}^2 \delta_{ij} -2 \ps{\nabla_{i} u}{\nabla_j u}}=0\, .
\end{gather}
Also this equation is to be interpreted in the weak sense, in particular we have that for all smooth vector fields with compact support $\xi:\Omega \to \R^m$
\begin{gather}\label{eq_stat}
 \int_\Omega \ton{\abs{\nabla u}^2 \delta_{ij} -2\partial_i u \partial_j u}\partial_i \xi_j=0\, .
\end{gather}
Note that a map $u\in C^2(\Omega,\R^n)$ satisfying \eqref{eq_Au} will automatically satisfy also \eqref{eq_stat}. However, in general this is not the case if $u\in H^1$ only. Maps satisfying both \eqref{eq_Au} and \eqref{eq_stat} are called \textit{stationary} harmonic maps.

An important consequence of \eqref{eq_stat} is the monotonicity of the normalized energy $\theta$ defined by
\begin{gather}
 \theta(x,r)= r^{2-m} \int_{\B r x} \abs{\nabla u}^2\, .
\end{gather}
\begin{lemma}
 For a stationary harmonic map $u\in H^1(\Omega,\R^n)$, and for almost all $x,r$ such that $\B r x \subset \Omega$, we have the monotonicity identity
 \begin{gather}\label{eq_theta'}
 \theta'(x,r)= 2r^{-m} \int_{\partial \B r x} \abs{(y-x)\cdot \nabla u}^2 dS(y) \geq 0\, .
 \end{gather}
 This in particular implies that $\theta(x,r)$ is a monotone function in $r$, and it also gives quantitative estimates for the integral of the radial part of the energy. Indeed, as a corollary we obtain that for all $0\leq s\leq r$:
 \begin{gather}
  2\int_{\B r x \setminus \B s x} \frac{\abs{(y-x)\cdot \nabla u}^2}{\abs{y-x}^{m}} = \theta(x,r)-\theta(x,s)\, .
 \end{gather}

\end{lemma}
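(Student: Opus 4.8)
The plan is to reduce to $x=0$ by translation and derive the identity directly from the inner-variation equation \eqref{eq_stat}. This is the natural tool precisely because \eqref{eq_stat} is already in divergence form and involves only first derivatives of $u$, so no regularity beyond $u\in H^1$ is needed and the second fundamental form term of \eqref{eq_Au} never enters. First I would record the elementary fact that $\rho\mapsto\int_{\B \rho 0}\abs{\nabla u}^2$ is absolutely continuous with $\frac{d}{d\rho}\int_{\B \rho 0}\abs{\nabla u}^2=\int_{\partial\B \rho 0}\abs{\nabla u}^2\,dS$ for a.e.\ $\rho$ (coarea / Fubini), so that $\theta(0,r)=r^{2-m}\int_{\B r 0}\abs{\nabla u}^2$ is absolutely continuous on $(0,\infty)$ and for a.e.\ $r$
\[
\theta'(0,r)=(2-m)r^{1-m}\int_{\B r 0}\abs{\nabla u}^2+r^{2-m}\int_{\partial\B r 0}\abs{\nabla u}^2\,dS\, .
\]

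Next I would insert into \eqref{eq_stat} the Lipschitz, compactly supported vector field $\xi^\varepsilon_j(y)=\phi_\varepsilon(\abs y)\,y_j$, where $\phi_\varepsilon\colon[0,\infty)\to[0,1]$ is smooth, equal to $1$ on $[0,r]$ and to $0$ on $[r+\varepsilon,\infty)$; stationarity extends to such $\xi$ by a routine density argument. Using $\partial_i\xi^\varepsilon_j=\phi_\varepsilon'(\abs y)\frac{y_iy_j}{\abs y}+\phi_\varepsilon(\abs y)\delta_{ij}$ together with $\delta_{ij}y_iy_j=\abs y^2$ and $y_iy_j\,\partial_i u\cdot\partial_j u=\abs{y\cdot\nabla u}^2$, equation \eqref{eq_stat} becomes
\[
\int\qua{\phi_\varepsilon'(\abs y)\,\abs y\,\abs{\nabla u}^2+(m-2)\,\phi_\varepsilon(\abs y)\,\abs{\nabla u}^2-2\,\phi_\varepsilon'(\abs y)\,\frac{\abs{y\cdot\nabla u}^2}{\abs y}}=0\, .
\]
Letting $\varepsilon\to0$, for a.e.\ $r$ the measures $\phi_\varepsilon'(\abs\cdot)\,dy$ converge to $-\delta_{\{\abs y=r\}}$ tested against the slice integrals, which yields the integral identity
\[
(m-2)\int_{\B r 0}\abs{\nabla u}^2=r\int_{\partial\B r 0}\abs{\nabla u}^2\,dS-\frac2r\int_{\partial\B r 0}\abs{y\cdot\nabla u}^2\,dS\, .
\]

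Substituting this into the formula for $\theta'(0,r)$ above, the two boundary terms $r^{2-m}\int_{\partial\B r 0}\abs{\nabla u}^2$ cancel and one is left with exactly $\theta'(0,r)=2r^{-m}\int_{\partial\B r 0}\abs{y\cdot\nabla u}^2\,dS\ge 0$, which is the claimed identity after translating back to a general center $x$. Monotonicity of $\theta(x,\cdot)$ is then immediate, and the corollary follows by integrating $\theta'$ from $s$ to $r$ using absolute continuity and converting the iterated integral $\int_s^r \rho^{-m}\int_{\partial\B \rho x}\abs{(y-x)\cdot\nabla u}^2\,dS\,d\rho$ into $\int_{\B r x\setminus\B s x}\abs{(y-x)\cdot\nabla u}^2\abs{y-x}^{-m}$ by the coarea formula.

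The main obstacle is purely technical: justifying the passage to the limit $\varepsilon\to0$ and the accompanying ``for almost every $r$'' statements — namely that for a.e.\ $r$ the restriction of $\nabla u$ to $\partial\B r x$ lies in $L^2$ and the spherical slice integrals are sufficiently regular near $\rho=r$ for $\int \phi_\varepsilon'(\abs y)\,F(y)\,dy\to-\int_{\partial\B r x}F\,dS$ to hold. All of this is standard Fubini/coarea bookkeeping; the algebraic identity behind the computation is short.
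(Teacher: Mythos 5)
Your proposal is correct and takes essentially the same approach as the paper: you test \eqref{eq_stat} with smooth approximations of the vector field $\xi_j = \chi_{B_r(x)}(y)\,(y-x)_j$ and pass to the limit, which is precisely what the paper's one-line proof sketch says. You have simply filled in the algebraic details (the computation of $\partial_i\xi_j$, the resulting integral identity on the sphere, and the cancellation of the boundary terms in $\theta'$) that the paper leaves to the reader; your computations check out.
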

\begin{proof}
 This identity is obtained by plugging in the vector field $\xi_i= \chi_{\B r x }\cdot x_i$ in \eqref{eq_stat} (or better, a sequence of smooth approximations of this field).
\end{proof}

The last ingredient needed for a very basic regularity theory for stationary harmonic maps is the $\epsilon$-regularity theorem by Bethuel (see \cite{beth}).
\begin{theorem}[$\epsilon$-regularity theorem]
 Let $u\in H^1$ be a stationary harmonic map. There exists an $\epsilon=\epsilon (m,\KN)>0$ such that $\theta(x,r)\leq \epsilon$ implies that $u$ is smooth on $\B {r/2}{x}$ with 
 \begin{gather}
  \abs{\nabla^k u(y)}\leq C(m,k,\KN) r^{-k} \theta(x,r)\, .
 \end{gather}
for all $k=1,\cdots,\infty$ and $y\in \B {r/2}{x}$.
\end{theorem}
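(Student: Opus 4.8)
This is the $\epsilon$-regularity theorem of Bethuel \cite{beth} (building on Schoen--Uhlenbeck in the minimizing case, and on Evans for spherical targets); I sketch how I would prove it. The core is a \emph{single-scale energy decay} estimate: there exist $\epsilon_0=\epsilon_0(m,\KN)$, $\tau=\tau(m,\KN)\in(0,1)$ and $\alpha=\alpha(m,\KN)\in(0,1)$ such that if $u$ is stationary harmonic on $\B{\rho}{y}$ with $\theta(z,s)\leq\epsilon_0$ for every $\B{s}{z}\subseteq\B{\rho}{y}$, then $\theta(y,\tau\rho)\leq\tau^{2\alpha}\,\theta(y,\rho)$. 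Granting this, I would normalize to $x=0$, $r=1$ via the scale invariance of $\theta$ under $u(\cdot)\mapsto u(x+r\,\cdot)$ and choose $\epsilon$ small in terms of $\epsilon_0$ and $m$; the monotonicity formula \eqref{eq_theta'}, applied at each point to produce the usual doubling-type bound, then forces $\theta(z,s)\leq\epsilon_0$ for all $z\in\B{1/4}{0}$ and all $\B{s}{z}\subseteq\B{1/4}{z}$, so the decay estimate applies at every such $z$ and can be iterated over dyadic scales $s=\tau^{j}/4$. This produces the Morrey bound $\sup_{z\in\B{1/4}{0},\,s\leq 1/4}\ s^{\,2-m-2\alpha}\int_{\B{s}{z}}\abs{\nabla u}^2\leq C\,\epsilon$, and hence $u\in C^{0,\alpha}_{\mathrm{loc}}(\B{1/4}{0})$ by the Dirichlet growth (Morrey) lemma.

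To prove the decay estimate itself I would put \eqref{eq_Au} into a good gauge. When the energy on a ball is small there is a Coulomb orthonormal frame $(e_\alpha)$ of the pulled-back bundle $u^*TN$ (H\'elein's moving frame / Uhlenbeck's Coulomb gauge) for which the connection $1$-forms $\omega_{\alpha\beta}=\ps{e_\alpha}{\nabla e_\beta}$ are antisymmetric in $\alpha,\beta$ and divergence free, $\divergence\,\omega_{\alpha\beta}=0$. Rewriting the harmonic map system in the components $\ps{\nabla u}{e_\alpha}$, the nonlinearity $A(u)(\nabla u,\nabla u)$ is rearranged into a sum of terms of div--curl (Jacobian) type $\omega\cdot\nabla u$ with $\omega$ divergence free; by the Coifman--Lions--Meyer--Semmes Hardy-space estimate, combined with the Morrey-type smallness of $\abs{\nabla u}^2$ supplied by \eqref{eq_theta'}, such terms lie in a Morrey space with exponent strictly larger than $m-2$ and with small norm. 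Feeding this back into $\Delta u=-A(u)(\nabla u,\nabla u)$ via a Riesz-potential estimate in Morrey spaces (Adams) and iterating the gain pushes the Morrey exponent of $\abs{\nabla u}^2$ above the threshold $m-2$, which is precisely the asserted decay of $\theta$.

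Finally I would bootstrap from $C^{0,\alpha}$ to $C^\infty$: once $\abs{\nabla u}$ lies in a subcritical Morrey space, $A(u)(\nabla u,\nabla u)\in L^{p}_{\mathrm{loc}}$ for some $p>m/2$, so Calder\'on--Zygmund together with Sobolev embedding, applied to $\Delta u=-A(u)(\nabla u,\nabla u)$, raise the integrability of $\nabla u$ step by step to every $L^{q}$; then $u\in C^{1,\beta}_{\mathrm{loc}}$, whence $A(u)(\nabla u,\nabla u)\in C^{0,\beta}_{\mathrm{loc}}$, Schauder gives $u\in C^{2,\beta}_{\mathrm{loc}}$, and one continues (using smoothness of $N$) to $u\in C^{\infty}$. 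Tracking the scaling of each estimate under $u(\cdot)\mapsto u(x+r\,\cdot)$ then produces the stated interior bounds on $\B{r/2}{x}$. The one genuinely hard ingredient is the middle step: constructing the Coulomb frame and extracting the div--curl/Hardy structure is exactly where the smallness hypothesis $\theta(x,r)\leq\epsilon$ is used, and it carries essentially all of the analytic difficulty; the propagation of smallness via monotonicity and the elliptic bootstrap are, by contrast, routine.
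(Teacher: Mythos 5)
The paper does not prove this theorem; it simply quotes it from Bethuel \cite{beth} and uses it as a black box. Your sketch is a correct outline of Bethuel's argument (Coulomb/moving-frame gauge fixing under small energy, div--curl structure with Hardy--BMO or Morrey/Adams estimates to get a one-scale energy-decay lemma, propagation of smallness via the monotonicity formula \eqref{eq_theta'}, dyadic iteration to a Morrey/H\"older bound, then elliptic bootstrap to smoothness), so it is consistent with the reference the paper invokes.
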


With these two theorems and a simple covering argument (see \cite{beth}), it is easy to see that
\begin{gather}
 \cH^{n-2}(\cS(u))=0\, .
\end{gather}
Note that in general this is not the case for weakly harmonic maps.

\subsection{\texorpdfstring{Monotonicity and $\epsilon$-regularity}{Monotonicity and epsilon-regularity}}

Following the natural approach of \cite{moser_appr_harm}, here we introduce approximately harmonic maps. In this note, approximately harmonic maps will always mean \textit{stationary} approximately harmonic maps.

Basically, we say that a map is approximately harmonic if it satisfies equations \eqref{eq_Au} and \eqref{eq_stat_short} up to a controlled error. 
\begin{definition}
 A map $u\in H^1(\Omega,N)$ is said to be approximately harmonic if the following are satisfied in the sense of distributions
 \begin{gather}
  \Delta(u)+A(u)(\nabla u,\nabla u)=f\, ,\label{eq_app1}\tag{W}\\
  \operatorname{div_j}\ton{\abs{\nabla u}^2 \delta_{ij} -2 \ps{\nabla_i u}{\nabla_j u}}+2\ps{\nabla_i u}{f}=0\, ,\label{eq_app_stat}\tag{S}
 \end{gather}
 for some $f\in L^2(\Omega)$. In order to obtain almost monotonicity for the normalized energy of the solution $u$, we assume that there exists $\cF,\gamma>0$ such that for all $\B r x \subset \Omega$: %\textcolor{red}{The (f) labels do not correctly take you here.  Also, just to push for a thought, is the following the best way to phrase the condition on $(f)$ we want?}
 \begin{gather}
  r^{4-m} \int_{\B r x} \abs f ^2 \leq \cF r^\gamma\, ,\label{eq_f}\tag{f}
 \end{gather}
where $\gamma>0$, and $\cF$ and $\gamma$ are independent of $x$.  One can interpret $(f)$ as saying some weighted maximal function of $f$ is uniformly bounded.  Although this assumption might sound extremely technical or unnatural, it is easy to see that any $f\in L^p(\Omega)$ with $p>m/2$ satisfies this condition. Indeed, by \hol inequality we have
\begin{gather}
 r^{4-m} \int_{\B r x} \abs f ^2 \leq  r^{4-m} \ton{\int_{\B r x} \abs f ^p}^{\frac 2 p} \ton{\omega_m r^m}^{\frac{p-2}{p}} \leq c(m,p) \norm{f}_{L^p(\Omega)}^2 r^{\frac 2 p (2p-m)}\, .
\end{gather}

%for some $f\in L^p(\Omega)$ with $p>m/2$ and $p\geq 2$.
\end{definition}

\begin{remark}
Note that the condition \eqref{eq_app_stat} is quite natural, since it would be satisfied automatically by a smooth map $u$ solving \eqref{eq_app1}. However, in general an $H^1$ solution to \eqref{eq_app1} will \textit{not} satisfy also \eqref{eq_app_stat}.
\end{remark}

The reason why we insist on \eqref{eq_app_stat} is that with this relation we can prove an almost monotonicity formula for approximately harmonic maps, which is essential for the estimates we need. The following lemma is taken from \cite[lemma 4.1]{moser_appr_harm}, however the quantities analyzed are slightly different. For the reader's convenience, we sketch a proof here.
\begin{lemma}\label{lemma_monotone}
For $m\geq 3$, let $u\in H^1(\B 3 0,N)$ be an approximately harmonic map satisfying \eqref{eq_app1} and \eqref{eq_app_stat} with \eqref{eq_f}. Suppose also that
\begin{gather}
 \theta(0,3)=3^{2-m}\int_{\B 3 0}\abs {\nabla u}^2 \leq \Lambda\, .
\end{gather}
Define the function
\begin{gather}\label{eq_deph_hat}
 \hat \theta_u(x,r)\equiv \hat \theta(x,r)= \theta(x,r) -\frac{2}{m-2}r^{2-m}\int_{\B r x} \ps{(y-x)\cdot \nabla u|_y}{f(y)} d\Vol(y)+ \frac{1}{(m-2)^2} \int_{\B r x} \frac{\abs{f}^2}{\abs {y-x}^{m-4}} d\Vol(y)\, .
\end{gather}
Then for all $\B s x \subseteq \B r x $ with $r\leq 1$ and $x\in \B 1 0$ we have $\hat \theta(x,r)\geq 0$ and 
\begin{gather}\label{eq_hat'}
 0\leq \int_{\B r x \setminus \B s x } \frac{\abs{(y-x)\cdot \nabla u}^2}{\abs{y-x}^m}\leq \hat \theta(x,r)-\hat \theta(x,s)\, .
\end{gather}
Moreover, for all $\operatorname{a}>0$ we have the bounds
\begin{gather}\label{eq_hatbounds}
(1-\operatorname{a})\hat \theta(x,r) - \frac{c(m,\gamma)\cF}{\operatorname{a}}r^\gamma\leq \theta(x,r)\leq (1+\operatorname{a})\hat \theta(x,r) + \frac{c(m,\gamma)\cF}{\operatorname{a}}r^\gamma\, ,
\end{gather}
and the uniform bounds
\begin{gather}\label{eq_theta_unif}
 \theta(x,r)+\hat \theta(x,r)\leq c(m)\Lambda + c(m,\gamma) \cF r^\gamma\, .
\end{gather}

\end{lemma}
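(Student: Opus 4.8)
The plan is to derive \eqref{eq_hat'} and the bounds \eqref{eq_hatbounds}, \eqref{eq_theta_unif} by mimicking the classical stationary‐harmonic computation, treating the $f$–dependent terms in $\hat\theta$ as exactly the corrections needed to absorb the new terms in the stationarity identity \eqref{eq_app_stat}. First I would record the analogue of the monotonicity computation: plugging $\xi_i=\chi_{B_r(x)}(y-x)_i$ (more precisely smooth approximations) into the weak stationarity identity \eqref{eq_app_stat}, the extra term $2\langle\nabla_i u,f\rangle$ produces $-2\int_{B_r(x)}\langle (y-x)\cdot\nabla u,f\rangle$, so that
\begin{gather}
\frac{d}{dr}\Big(r^{2-m}\int_{B_r(x)}|\nabla u|^2\Big)= 2r^{-m}\int_{\partial B_r(x)}|(y-x)\cdot\nabla u|^2\,dS - \frac{2(2-m)}{?}\cdots - 2r^{2-m}\int_{\partial B_r(x)}\langle (y-x)\cdot\nabla u,f\rangle\,dS\, ,
\end{gather}
i.e. $\theta'(x,r)$ equals the good radial boundary term plus an error controlled by $f$. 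I would then compute the $r$–derivatives of the two correction terms in \eqref{eq_deph_hat}: for $\Theta_1(r):=\frac{2}{m-2}r^{2-m}\int_{B_r(x)}\langle(y-x)\cdot\nabla u,f\rangle$ and $\Theta_2(r):=\frac{1}{(m-2)^2}\int_{B_r(x)}|f|^2|y-x|^{4-m}$, one checks by direct differentiation (splitting the $r^{2-m}$ prefactor from the domain of integration) that $\theta'(x,r)+\Theta_1'(x,r)\ \text{(with the right sign)}\ +\Theta_2'(x,r)$ telescopes so that $\hat\theta'(x,r)\ge 2r^{-m}\int_{\partial B_r(x)}|(y-x)\cdot\nabla u|^2\,dS\ge0$; this is really a completion–of–square identity, the cross term $\langle(y-x)\cdot\nabla u,f\rangle$ being dominated pointwise after the integrations against the right powers of $|y-x|$. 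Integrating from $s$ to $r$ gives \eqref{eq_hat'}.

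Next, for the comparison bounds \eqref{eq_hatbounds} I would estimate the two correction terms directly using the hypothesis \eqref{eq_f}. By Cauchy–Schwarz and Young's inequality with parameter $\operatorname{a}$,
\begin{gather}
\Big|\frac{2}{m-2}r^{2-m}\int_{B_r(x)}\langle(y-x)\cdot\nabla u,f\rangle\Big|\leq \operatorname{a}\,\theta(x,r) + \frac{c(m)}{\operatorname{a}}\, r^{4-m}\int_{B_r(x)}|f|^2\leq \operatorname{a}\,\theta(x,r)+\frac{c(m)\cF}{\operatorname{a}}r^\gamma\, ,
\end{gather}
where I used $|(y-x)\cdot\nabla u|\le r|\nabla u|$ and the definition of $\theta$; and the term $\Theta_2$ is bounded by decomposing $B_r(x)$ into dyadic annuli $B_{2^{-j}r}(x)\setminus B_{2^{-j-1}r}(x)$, on which $|y-x|^{4-m}\le c\,(2^{-j}r)^{4-m}$, then applying \eqref{eq_f} to each annulus and summing the resulting geometric series in $2^{-j\gamma}$ to get $\Theta_2(x,r)\le c(m,\gamma)\cF r^\gamma$. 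Combining these with $\hat\theta=\theta-\Theta_1+\Theta_2$ yields both inequalities in \eqref{eq_hatbounds} after rearranging; in particular $\hat\theta(x,r)\ge0$ follows either from \eqref{eq_hat'} together with $\hat\theta(x,s)\to0$ as $s\to0$ (justified again by these bounds, since $\theta(x,s)+\Theta_1+\Theta_2\to0$), or directly from the lower bound in \eqref{eq_hatbounds} with $\operatorname{a}$ chosen small and the monotonicity. Finally, the uniform bound \eqref{eq_theta_unif} comes from: $\hat\theta$ is monotone nondecreasing by \eqref{eq_hat'}, hence $\hat\theta(x,r)\le\hat\theta(x,2)$ for $x\in B_1(0)$, $r\le1$, so by the upper bound in \eqref{eq_hatbounds} applied at radius $2$ with $\operatorname{a}=1$, $\hat\theta(x,2)\le 2\theta(x,2)+c(m,\gamma)\cF$, and $\theta(x,2)\le c(m)\theta(0,3)\le c(m)\Lambda$ by the (already established) monotonicity of $\theta$ up to the $f$–error — more carefully, one compares $\theta(x,2)$ and $\theta(0,3)$ through $\hat\theta$ again — giving the stated estimate for $\hat\theta$, and then for $\theta$ via \eqref{eq_hatbounds} once more.

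The main obstacle I expect is bookkeeping rather than conceptual: getting the precise constants and signs right in the differentiation of $\hat\theta$ so that the $f$–terms cancel \emph{exactly} against the boundary and bulk contributions coming from \eqref{eq_app_stat}, and in particular verifying that $\hat\theta$ is defined with exactly the coefficients $\frac{2}{m-2}$ and $\frac{1}{(m-2)^2}$ that make this work (this is where the condition $m\geq 3$ enters, to keep $|y-x|^{4-m}$ locally integrable and the prefactor $r^{2-m}$ differentiable). A secondary technical point is the approximation argument: equation \eqref{eq_app_stat} holds only in the distributional sense, so the vector field $\xi_i=\chi_{B_r(x)}(y-x)_i$ must be mollified and the boundary terms recovered in the limit for a.e.\ $r$; since $f\in L^2$ and the radial energy $|(y-x)\cdot\nabla u|^2|y-x|^{-m}$ is integrable by the very identity we are proving, a Fubini/Lebesgue‐point argument on the sphere handles this exactly as in the harmonic case. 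Everything else reduces to Cauchy–Schwarz, Young's inequality, and the dyadic summation powered by \eqref{eq_f}.
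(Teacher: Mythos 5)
Your proposal matches the paper's argument step for step: test the stationarity identity with the radial cutoff vector field $\xi(y)=\phi(|y|)(y-x)$, observe that the $f$–dependent correction terms in $\hat\theta$ complete the square in $\hat\theta'$ so that $\hat\theta$ is monotone, and then use Young's inequality together with condition \eqref{eq_f} for the comparison and uniform bounds — the only cosmetic difference is that you control $\int_{B_r(x)}|f|^2|y-x|^{4-m}$ by a dyadic annuli sum where the paper integrates by parts in the radial variable, both giving $c(m,\gamma)\cF r^\gamma$. Two small slips that do not affect validity: the correct coefficient in the intermediate bound is $\hat\theta'(x,r)\geq r^{-m}\int_{\partial B_r(x)}|(y-x)\cdot\nabla u|^2\,dS$ (that is, $1$, not $2$ — after completing the square one copy of the radial energy is spent absorbing the cross term $\langle (y-x)\cdot\nabla u, f\rangle$, unlike in the exact stationary case \eqref{eq_theta'}), and $\theta(x,s)$ need not tend to $0$ as $s\to 0$ at a singular point, so nonnegativity of $\hat\theta$ should instead come from your fallback route: the $\theta$–upper bound in \eqref{eq_hatbounds} forces $\liminf_{s\to0}\hat\theta(x,s)\geq 0$, and monotonicity does the rest.
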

\begin{proof}
 For convenience, we assume $x=0$. First of all, we prove that, under condition \eqref{eq_f}, the function $\hat \theta$ is finite. For $m=3,4$, this is obvious since $f\in L^2$, and \eqref{eq_f} is not needed. For $m\geq 5$, by integration by parts we have
 \begin{gather}
  \int_0^r s^{4-m}\int_{\partial \B s 0} \abs{f}^2 d\theta ds = \qua{s^{4-m}\int_{\B s 0} \abs{f}^2 }^r_0 +\int_{0}^r s^{3-m} dt \int_{\B t 0} \abs f^2\leq\\
 \leq \cF r^\gamma +(m-4)\cF \int_{s}^r t^{3-m} t^{m-4+\gamma} dt \leq \cF\ton{1+\frac{m-4}{\gamma}}r^\gamma\, .\label{eq_mono1}
 \end{gather}

 As for the monotonicity of $\hat \theta$, the proof is a simple application of the stationary equation \eqref{eq_app_stat}. Indeed, let $\phi$ be any Lipschitz radial cutoff function with $\phi(0)=1$ and $\phi(r)=0$, and consider the vector field $\xi(y)=\phi(\abs {y}) \vec y$. By testing \eqref{eq_app_stat} with $\xi$, we get
  \begin{gather}
   \int_{\Omega} \phi \ton{(m-2) \abs{\nabla u}^2 -2\ps{y\cdot \nabla u}{f} } = \int_{\Omega} \ton{-\phi'} \abs y \ton{\abs{\nabla u}^2 -2  \abs{\hat r\cdot \nabla u}^2 }
  \end{gather}
 where $\hat r = \abs y ^{-1} y $ is the unit norm radial vector. By letting $\phi$ converge to $\chi_{\B r 0}$, we prove that for almost all $r$
 \begin{gather}\label{eq_stat1}
  \int_{\B r 0} \ton{(m-2) \abs{\nabla u}^2 -2\ps{y\cdot \nabla u}{f} } = r \int_{\partial \B r 0} \ton{\abs{\nabla u}^2 -2  \abs{\hat r\cdot \nabla u}^2 } \, .
 \end{gather}
The derivative of $\hat \theta$ is, at least a.e. in $r$,
\begin{gather}
 \hat \theta(0,r)' = \frac{2-m}{r} \theta(0,r) + r^{2-m}\int_{\partial \B r 0} \abs{\nabla u}^2+2r^{2-m}\int_{\B r 0} \ps{y\cdot \nabla u}{f} +\\
 -\frac{2}{m-2}r^{2-m}\int_{\partial \B r 0} \ps{y\cdot \nabla u}{f} +\frac{1}{(m-2)^2} r^{4-m}\int_{\partial \B r 0}\abs f^2\, .
\end{gather}
By plugging in \eqref{eq_stat1}, we obtain for a.e. $r$
\begin{gather}
 \hat \theta(0,r)' = r^{-m}\ton{2\int_{\partial \B r 0}\abs{y\cdot \nabla u}^2 -\frac{2}{m-2} r^2 \int_{\partial \B r 0} \ps{y\cdot \nabla u}{f} +\frac{1}{(m-2)^2}r^4 \int_{\partial \B r 0}\abs f^2}\geq\\
 \geq r^{-m}\int_{\partial \B r 0}\abs{y\cdot \nabla u}^2\, .
\end{gather}
where the last estimate is a simple application of Young's inequality. By integrating this relation over $[s,r]$, we obtain \eqref{eq_hat'}.

As for \eqref{eq_hatbounds}, this follows immediately from Young's inequality applied to the rhs of \eqref{eq_deph_hat} and \eqref{eq_mono1}. Indeed:
\begin{gather}\label{eq_mono2}
 \abs{\frac{2}{m-2}r^{2-m}\int_{\B r x} \ps{(y-x)\cdot \nabla u|_y}{f(y)} }\leq \operatorname{a} \ \theta(x,r) + \frac{r^{4-m}}{(m-2)^2\operatorname{a}} \int_{\B r x} \abs {f}^2\, .
\end{gather}

We conclude by noticing that since $\hat\theta$ is monotone in $r$, and since by \eqref{eq_mono1} and \eqref{eq_mono2}:
\begin{gather}
 \hat\theta(x,1){\leq} 2\theta(x,1) + \cF+ c(m,\gamma) \cF \leq c(m) \Lambda + c(m,\gamma) \cF\, ,
\end{gather}
then the bounds on $\hat \theta$ are obvious. The uniform bounds on $\theta$ are then a consequence of the previous estimate \eqref{eq_hatbounds}.
\end{proof}

\begin{remark}\label{rem_curv}
 In case the domain space is a Riemannian manifold $M$, the definition of $\hat \theta$ would need to be changed a little. We refer the reader to \cite[section 2.2]{xin} for more details on this.  We just mention that the changes arise from the fact that the Hessian of the distance function $r$ is a little different than in the Euclidean case. This is related to the derivatives of the radial vector field $\xi$ used in the proof of the monotonicity formula.
 
 %From the practical point of view, one would still obtain a monotonicity formula, but for the modified quantity $\tilde \theta(x,r)= e^{\operatorname{a} r} \hat \theta(x,r)$, where $\operatorname{a}$ is a constant depending on the sectional curvature bounds of the domain, and the monotonicity works only for geodesic balls, i.e., for $r$ smaller than the injectivity radius of the domain.
\end{remark}

\vspace{3mm}

For approximate harmonic maps, it is also possible to prove an $\epsilon$-regularity theorem as for stationary harmonic maps. The underlying techniques are basically the same, up to minor technical details. Here we quote the $\epsilon$-regularity theorem in \cite[proposition 4.1]{moser_appr_harm}.
\begin{theorem}\label{th_app_eps_reg}
 Let $u$ solve \eqref{eq_app1} and \eqref{eq_app_stat} with \eqref{eq_f}. Then there exists $\epsilon_0,\alpha>0$ depending only on $m,\KN,\gamma$ such that
 \begin{gather}
  \theta(x,r)\leq \epsilon_0 \quad \Longrightarrow \quad u\in C^{0,\alpha} (\B {r/2}{x}) \ \ \text{with} \ \ \norm{u}_{C^{0,\alpha}}\leq C(m,\KN,\cF,\gamma)\, .
 \end{gather}
\end{theorem}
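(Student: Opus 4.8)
The plan is to follow the classical Schoen-type scheme for stationary harmonic maps, now perturbed by the error term $f$. First I would fix the scale by rescaling to $r=1$, noting that under $x\mapsto x+ry$ the equations \eqref{eq_app1} and \eqref{eq_app_stat} are preserved with $f$ replaced by $r^2 f(x+ry)$, and hypothesis \eqref{eq_f} rescales consistently (picking up at worst a factor $r^\gamma\le 1$ in $\cF$); thus it suffices to prove: there is $\epsilon_0=\epsilon_0(m,\KN,\gamma)$ so that $\theta(0,1)\le\epsilon_0$ forces $u\in C^{0,\alpha}(\B{1/2}{0})$ with the stated bound. The core is a \emph{decay estimate} at small scales: I would show that there exist $\eta\in(0,1)$, $\sigma\in(0,1)$, and $\epsilon_1>0$ such that, for every $\B{\rho}{z}\subseteq\B{1}{0}$ with $\theta(z,\rho)\le\epsilon_1$, one has
\begin{gather}
 \theta(z,\eta\rho)\le \tfrac12\,\theta(z,\rho) + C\,\cF\,\rho^\gamma\, .
\end{gather}
The main tool here is the standard reverse-type / Caccioppoli inequality combined with a Poincaré inequality for $u$ compared to its average on the annulus (or, in the harmonic-replacement version, a comparison with the harmonic map with the same boundary trace, whose existence and smoothness on $\B{\rho/2}{z}$ is guaranteed in the small-energy regime). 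Concretely, writing $h$ for the harmonic replacement on $\B{\rho}{z}$ and using \eqref{eq_app1} tested against $u-h$, one controls $\int_{\B\rho z}|\nabla(u-h)|^2$ by $\int_{\B\rho z}|\nabla u|^2\,|\nabla u|^2$ (the quadratic nonlinearity, absorbed via small energy and the monotonicity bound \eqref{eq_theta_unif}) plus $\int_{\B\rho z}|f||u-h|$, and the latter is estimated by Hölder and Poincaré against $\rho^{4-m}\int|f|^2\le\cF\rho^\gamma$ and a small multiple of the energy. Then the known interior decay $\theta_h(z,\eta\rho)\le c\,\eta^2\,\theta_h(z,\rho/2)$ for the smooth harmonic map $h$ yields the displayed inequality after choosing $\eta$ small.

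Next I would iterate the decay inequality. Since \eqref{eq_f} gives $\theta$ an additive error that is \emph{summable} along the geometric sequence of radii $\rho_j=\eta^j\rho_0$ (because $\sum_j (\eta^j)^\gamma<\infty$), a routine iteration lemma shows $\theta(z,\rho_j)\le \tfrac12^j\theta(z,\rho_0)+C\cF\rho_j^\gamma$ and hence, interpolating between the $\rho_j$, that
\begin{gather}
 \theta(z,\rho)\le C\ton{\rho^{2\beta}\theta(z,\rho_0) + \cF\,\rho^\gamma}
\end{gather}
for some $\beta=\beta(\eta)>0$ and all $\rho\le\rho_0$, uniformly in $z\in\B{1/2}{0}$; one needs the smallness hypothesis $\theta(0,1)\le\epsilon_0$ together with the uniform bound \eqref{eq_theta_unif} to guarantee that $\theta(z,\rho)\le\epsilon_1$ for all such $z$ and all $\rho\le\rho_0$, which is what lets the iteration start at every point. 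Setting $\alpha=\min\{\beta,\gamma/2\}$, this is Morrey-type decay of $\int_{\B\rho z}|\nabla u|^2$ of order $\rho^{m-2+2\alpha}$ on $\B{1/2}{0}$, and Morrey's Dirichlet growth theorem then gives $u\in C^{0,\alpha}(\B{1/2}{0})$ with $\norm{u}_{C^{0,\alpha}}\le C(m,\KN,\cF,\gamma)$ — note the constant depends on $\cF$ and $\gamma$ but the exponent $\alpha$ and the smallness threshold $\epsilon_0$ depend only on $m,\KN,\gamma$, exactly as claimed.

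The step I expect to be the main obstacle is the energy-decay inequality itself: making the harmonic-replacement comparison rigorous for a merely $H^1$ stationary map (ensuring the replacement exists with small energy, that $u-h$ is a legitimate test function, and that the quadratic term $A(u)(\nabla u,\nabla u)\cdot(u-h)$ is genuinely absorbed rather than merely bounded) is where all the analytic care lies; the contributions of $f$, by contrast, are handled cleanly by Hölder plus Poincaré thanks to \eqref{eq_f}, and the iteration and Morrey steps are routine. Since this is exactly \cite[proposition 4.1]{moser_appr_harm}, I would in the write-up simply cite that result and indicate the rescaling normalization and the role of \eqref{eq_f} and \eqref{eq_theta_unif}, rather than reproduce the full argument.
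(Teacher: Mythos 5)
Your proposal is correct and takes essentially the same route as the paper: the paper's proof of Theorem \ref{th_app_eps_reg} simply cites \cite[proposition 4.1]{moser_appr_harm}, remarking that the argument is a polynomial decay estimate for $\theta(x,r)$ analogous to the stationary harmonic map case. Your sketch of the rescaling normalization, the harmonic-replacement decay inequality with the $\cF\rho^\gamma$ error, the summable iteration, and the Morrey embedding fills in what that citation hides, and you correctly conclude by deferring to the same reference.
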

\begin{proof}
 The proof is based on a polynomial decay (in $r$) for the normalized energy $\theta(x,r)$ which is very similar to the proof of the $\epsilon$-regularity theorem for stationary harmonic maps. For the details, we refer the reader to \cite[proposition 4.1]{moser_appr_harm}.
\end{proof}

As a corollary, we obtain a similar statement for $\hat \theta$.
\begin{corollary}\label{cor_app_eps_reg}
 Let $u$ be as above. Then there exists $\epsilon_0,r_0,\alpha>0$ depending only on $m,\KN,\gamma$ such that
 \begin{gather}
  \hat \theta(x,r)\leq \epsilon_0 \ \ \text{ and } \ \ r\leq r_0 \quad \Longrightarrow \quad u\in C^{0,\alpha} (\B {r/2}{x}) \ \ \text{with} \ \ \norm{u}_{C^{0,\alpha}}\leq C(m,\KN,\cF,\gamma)\, .
 \end{gather}
\end{corollary}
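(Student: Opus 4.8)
The plan is to deduce Corollary~\ref{cor_app_eps_reg} from Theorem~\ref{th_app_eps_reg} simply by comparing $\hat\theta$ with $\theta$ via the estimate \eqref{eq_hatbounds} of Lemma~\ref{lemma_monotone}. First I would fix the constant $\epsilon_0'$ and the \hol exponent $\alpha$ produced by Theorem~\ref{th_app_eps_reg}, so that $\theta(x,r)\leq\epsilon_0'$ already forces $u\in C^{0,\alpha}(\B{r/2}{x})$ with the stated norm bound. The goal is then to choose $\epsilon_0$ and $r_0$, depending only on $m,\KN,\gamma$ (and implicitly $\cF$), so that the hypotheses $\hat\theta(x,r)\leq\epsilon_0$ and $r\leq r_0$ guarantee $\theta(x,r)\leq\epsilon_0'$.

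The key step is the choice of parameters. Apply \eqref{eq_hatbounds} with, say, $\operatorname{a}=1$: this gives
\begin{gather}
 \theta(x,r)\leq 2\hat\theta(x,r) + c(m,\gamma)\cF r^\gamma\, .
\end{gather}
Now set $\epsilon_0 = \epsilon_0'/4$ and pick $r_0>0$ small enough that $c(m,\gamma)\cF r_0^\gamma \leq \epsilon_0'/2$, i.e. $r_0 = \big(\epsilon_0'/(2c(m,\gamma)\cF)\big)^{1/\gamma}$. Then for $r\leq r_0$ and $\hat\theta(x,r)\leq\epsilon_0$ we obtain $\theta(x,r)\leq 2\epsilon_0 + \epsilon_0'/2 = \epsilon_0'$, and Theorem~\ref{th_app_eps_reg} applies on $\B r x$ to give $u\in C^{0,\alpha}(\B{r/2}{x})$ with $\norm{u}_{C^{0,\alpha}}\leq C(m,\KN,\cF,\gamma)$, as claimed. (One should note that $r_0$ genuinely depends on $\cF$; this is harmless and consistent with the statement, which already allows the \hol norm bound to depend on $\cF$.)

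There is essentially no obstacle here — the only point requiring a small amount of care is that $\hat\theta$ is not a priori finite or nonnegative for general $x,r$, but Lemma~\ref{lemma_monotone} already provides exactly this (finiteness from \eqref{eq_mono1}, nonnegativity and monotonicity from \eqref{eq_hat'}) under the standing assumption $\theta(0,3)\leq\Lambda$, so we may invoke \eqref{eq_hatbounds} freely for $x\in\B 1 0$ and $r\leq 1$. Strictly speaking one should restrict to $r_0\leq 1$, which is automatic after possibly shrinking $r_0$ further. The resulting dependences of $\epsilon_0,r_0,\alpha$ on $m,\KN,\gamma$ are inherited directly from Theorem~\ref{th_app_eps_reg} and from the explicit form of the constant in \eqref{eq_hatbounds}.
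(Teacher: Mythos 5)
Your proof is correct and is essentially the paper's own (one-line) argument, made explicit: the paper's proof simply says the corollary "follows immediately from the estimates in \eqref{eq_hatbounds} and the previous proposition," and your choice of $\operatorname{a}=1$, $\epsilon_0=\epsilon_0'/4$, and $r_0=(\epsilon_0'/(2c(m,\gamma)\cF))^{1/\gamma}$ is the natural way to carry it out. Your parenthetical observation that $r_0$ genuinely depends on $\cF$ is a fair and careful catch — the statement of the corollary lists $r_0$ as depending only on $m,\KN,\gamma$, which is a minor imprecision in the paper, since no choice of $r_0$ can absorb the $\cF r^\gamma$ error term in \eqref{eq_hatbounds} without knowing $\cF$.
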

\begin{proof}
 This corollary follows immediately from the estimates in \eqref{eq_hatbounds} and the previous proposition.
\end{proof}

As with stationary harmonic maps, this $\epsilon$-regularity theorem and a simple covering argument imply that
\begin{gather}
 \cH^{n-2}(\cS(u))=0\, .
\end{gather}

\paragraph{Invariance by scale}
As it is well-known, for stationary harmonic map the normalized energy $\theta$ is scale-invariant, in the sense that if we consider the map
\begin{gather}
 T^u_{x,r}(y)\equiv u(x+ry)\, ,
\end{gather}
then $\theta_T(0,1)=\theta_u(x,r)$, without any scaling factors. This is an essential property of the normalized energy.

In the case of approximate harmonic maps, the quantity $\hat \theta$ satisfies similar properties. However, some scaling factors are inevitably present on the zero order term $f$. Indeed, the map $T\equiv T^u_{x,r}$ in this case will be an approximate harmonic map satisfying
 \begin{gather}
  \Delta(T)+A(T)(\nabla T,\nabla T)=\tilde f\, ,\label{eq_T1}\\
  \operatorname{div}\ton{\abs{\nabla T}^2 e_i -2 \ps{\nabla_{e_i} T}{\nabla T}}+2\ps{\nabla_{e_i} T}{\tilde f}=0\, ,\label{eq_T2}\\
  \tilde f(y) \equiv r^2 f(x+ry)\, . \label{eq_Tf}
 \end{gather}
Thus, we obtain that $\hat \theta_u(x,r)=\hat \theta_T(0,1)$ if we replace $f$ with $\tilde f$ in the definition of $\hat \theta_T$. Note also that if \eqref{eq_f} is satisfied, then for all $\B r x \subset \B 2 0$ we have
\begin{gather}
 \int_{\B 1 0} \abs{\tilde f}^2 = r^4\int_{\B 1 0} \abs{f(x+ry)}^2=r^{4-m}\int_{\B r x} \abs{f(y)}^2 \leq \cF r^\gamma \, . 
\end{gather}

\subsection{Weak convergence}

In this section, we recall a standard result about the convergence of almost harmonic maps. In particular, we want to show that given a sequence of approximate harmonic maps with bounded energy and such that $f_i\to 0$, their weak sub-limit is a weakly harmonic map. This result is an easy adaptation of standard estimates in literature, see for example \cite[theorem 4]{torowang} or \cite[corollary 2.3]{schoen_analytical_aspects}. 

\begin{proposition}\label{prop_weaklim}
 Let $u_i$ solve \eqref{eq_app1} and \eqref{eq_app_stat}, where $f_i\in L^2(\B 3 0)$ satisfies \eqref{eq_f} with $\cF$ and $\gamma$ independent of $i$. Assume that $\int_{\B 3 0}\abs{\nabla u_i}^2\leq C$ and that $f_i\wto 0$ in weak $L^2$. Then there exists a subsequence (which will still be denoted by $u_i$) such that
\begin{enumerate}
 \item $u_i$ converges in the weak $H^1$ sense to some $u$
 \item there exists a close set $\Sigma$ with finite $n-2$ packing content such that the sequence $u_i$ converges strongly in $H^1_{loc}(\B 1 0 \setminus \Sigma)\cap C^{0,\alpha/2}_{loc}(\B 1 0 \setminus \Sigma)$ to $u$, which is a smooth map on $\B 1 0\setminus \Sigma$. 
 \item $u$ is weakly-harmonic (but not necessarily stationary harmonic)
 \item\label{it_uc} $u$ enjoys the unique continuation property, in the sense that if there exists another weakly harmonic map $v$ such that $u=v$ a.e. on an open set, and $v$ is smooth away from a set of $\Sigma'$ with finite $n-2$ packing content, then $u=v$
\end{enumerate}
\end{proposition}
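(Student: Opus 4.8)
The plan is to extract the subsequence and establish the four claims in order, with the $\epsilon$-regularity theorem (Theorem \ref{th_app_eps_reg}) and the monotonicity formula (Lemma \ref{lemma_monotone}) doing the heavy lifting. First, since $\int_{\B 3 0}\abs{\nabla u_i}^2\leq C$ and $N$ is bounded (being compact and isometrically embedded), the maps $u_i$ are bounded in $H^1(\B 3 0, \R^n)$; after passing to a subsequence we get $u_i\wto u$ weakly in $H^1$, $u_i\to u$ strongly in $L^2$, and $u_i(x)\in N$ for a.e.\ $x$ passes to the limit so $u\in H^1(\B 3 0, N)$. This gives item (1).

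For item (2), I would produce the singular set $\Sigma$ by the standard $\epsilon$-regularity covering argument. Set $\Sigma = \{x\in \B 1 0 : \liminf_i \hat\theta_{u_i}(x,r) > \epsilon_0 \text{ for all } r\leq r_0\}$ with $\epsilon_0, r_0$ from Corollary \ref{cor_app_eps_reg} — more precisely, $\Sigma$ should be defined so that its complement is exactly the set of points where, along the subsequence, some small ball has small normalized energy. The uniform energy bound \eqref{eq_theta_unif} together with a Vitali/covering argument (as in the $\cH^{n-2}(\cS(u))=0$ argument quoted after Corollary \ref{cor_app_eps_reg}, now done quantitatively) gives that $\Sigma$ is closed with finite $(n-2)$-dimensional packing content. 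Away from $\Sigma$, for every $x\notin\Sigma$ there is a ball $B_\rho(x)$ and a subsequence along which $\hat\theta_{u_i}(x, \rho)\leq \epsilon_0$; by Corollary \ref{cor_app_eps_reg} (applied to each $u_i$, using $f_i\to 0$ in $L^2$ to control the correction terms in $\hat\theta$ and hence also $\theta$ small) the $u_i$ are uniformly $C^{0,\alpha}$ on $B_{\rho/2}(x)$. The uniform $C^{0,\alpha}$ bound plus elliptic estimates for \eqref{eq_app1} (bootstrapping: $\nabla u_i$ locally bounded, so the right-hand side $f_i - A(u_i)(\nabla u_i,\nabla u_i)$ is controlled) upgrade this to uniform $C^{1,\alpha'}_{loc}$ bounds away from $\Sigma$; Arzel\`a--Ascoli plus $f_i\to 0$ then give strong $H^1_{loc}\cap C^{0,\alpha/2}_{loc}$ convergence on $\B 1 0\setminus\Sigma$ and smoothness of $u$ there.

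Item (3) is then immediate: on $\B 1 0 \setminus \Sigma$ the strong convergence lets us pass to the limit in the weak formulation of \eqref{eq_app1}, with $A(u_i)(\nabla u_i, \nabla u_i)\to A(u)(\nabla u,\nabla u)$ in $L^1_{loc}$ (strong $H^1$ convergence of $\nabla u_i$) and $f_i\wto 0$; since $\Sigma$ has zero $\cH^{n-2}$-measure hence zero capacity (it is $(n-2)$-dimensional, codimension $\geq 2$), the limiting equation holds weakly across $\Sigma$ on all of $\B 1 0$, so $u$ is weakly harmonic. We do not claim stationarity, since the stationary equation \eqref{eq_app_stat} involves $\langle \nabla_i u, f\rangle$ and $f_i\wto 0$ only weakly, so the product need not converge. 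For item (4), the unique continuation: given another weakly harmonic $v$ smooth off a small set $\Sigma'$, with $u=v$ a.e.\ on an open set $U$, both $u$ and $v$ are smooth and harmonic on the open connected set $\B 1 0 \setminus (\Sigma\cup\Sigma')$ (or on a connected component thereof meeting $U$), and classical unique continuation for the harmonic map system (Sampson, or via Aronszajn's unique continuation for second-order elliptic systems applied to the difference after straightening) forces $u=v$ on that set; since $\Sigma\cup\Sigma'$ has codimension $\geq 2$ and $u,v\in H^1$, the equality extends a.e.

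The main obstacle I expect is item (2): carefully arranging the singular set $\Sigma$ so that the passage to a subsequence is uniform enough that the $\epsilon$-regularity theorem applies to \emph{every} $u_i$ (not just in the limit) on a fixed neighborhood of each good point, while simultaneously proving the finite $(n-2)$-packing content bound uniformly in $i$. This requires the standard but slightly delicate interplay between the almost-monotonicity of $\hat\theta_{u_i}$ (with the $f_i$-error terms, controlled by \eqref{eq_f} uniformly in $i$), the lower-semicontinuity of $\liminf_i \hat\theta_{u_i}$, and a diagonal subsequence argument over a countable dense set of balls. The other claims are comparatively routine given the quoted $\epsilon$-regularity and monotonicity results.
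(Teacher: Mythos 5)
Your overall architecture matches the paper's proof: the same singular set
\begin{equation}
\Sigma=\bigcap_{r>0}\cur{x\in\overline{\B 1 0}\ : \ \liminf_i\theta_{u_i}(x,r)\ge\epsilon_0},
\end{equation}
the same disjoint-ball/diagonal argument for the $(m-2)$-packing bound, the same capacity-removal argument for extending the weak equation across $\Sigma$, and essentially the same unique-continuation reasoning (smooth harmonic maps off a codimension-$\ge 2$ non-disconnecting set, linearized PDE argument). The point where you diverge — and where your argument has a genuine gap — is the strong $H^1_{loc}$ convergence away from $\Sigma$.

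You propose to go from the uniform $C^{0,\alpha}$ bound of Theorem \ref{th_app_eps_reg} to uniform $C^{1,\alpha'}_{loc}$ bounds by elliptic bootstrap, invoking ``$\nabla u_i$ locally bounded, so the right-hand side $f_i-A(u_i)(\nabla u_i,\nabla u_i)$ is controlled.'' This step does not close. The $\epsilon$-regularity theorem for approximate harmonic maps with $f_i\in L^2$ satisfying \eqref{eq_f} gives only $C^{0,\alpha}$ estimates, not pointwise gradient bounds: a uniform $C^{0,\alpha}$ bound does not imply $\nabla u_i\in L^\infty_{loc}$, and with $f_i$ only in $L^2$ (not in a Hölder or $L^\infty$ space) the Schauder/bootstrap machinery cannot produce uniform $C^{1,\alpha'}$ bounds for the $u_i$. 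This is not merely a presentational shortcut: the maps $u_i$ genuinely need not be $C^1$, so an Arzelà--Ascoli argument on $\nabla u_i$ is unavailable. The paper sidesteps this entirely. Its Lemma \ref{lemma_conv} proves strong $H^1$ convergence on the good region directly from the uniform $C^{0,\alpha}$ convergence and the weak form of \eqref{eq_app1}: one integrates by parts
\begin{equation}
-\int\ps{\nabla(u_i-u)}{\nabla u_i}\phi=\int\ps{u_i-u}{A(u_i)(\nabla u_i,\nabla u_i)+f_i}\phi+\int\ps{u_i-u}{\nabla u_i\cdot\nabla\phi},
\end{equation}
and uses $\|u_i-u\|_{L^\infty}\to 0$ together with the uniform $L^1$ bound on the right-hand side to conclude $\nabla u_i\to\nabla u$ strongly in $L^2_{loc}$. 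Smoothness of the limit $u$ (not of the $u_i$) then follows because the limiting equation has $f=0$, so regularity theory for weakly harmonic maps that are continuous applies to $u$ alone. You should replace the $C^{1,\alpha'}$-bootstrap step with an argument of this type; otherwise the rest of your plan, including passing to the limit in the equation and the capacity argument, is sound and parallels the paper.
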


\begin{remark}
 Note that even if $u_i$ are stationary harmonic (i.e. if $f_i=0$ for all $i$), their limit might not be stationary. An interesting example of this is given in \cite{dinglili}.
\end{remark}

\begin{remark}
 As a corollary of this theorem, we obtain the following. Consider the measure $\abs{\nabla u_i}^2 d\Vol$ on $\B 1 0$. Then in the weak sense of measures we have
 \begin{gather}
  \abs{\nabla u_i}^2 d\Vol\wto \abs{\nabla u}^2 d\Vol + \nu\, ,
 \end{gather}
where $\nu$ is a nonnegative measure by Fatou's lemma, which is supported on the set $\Sigma$ described in the previous theorem. In particular, the support of $\nu$ has finite $n-2$ packing content. For stationary harmonic maps, this measure is called \textit{defect measure}, and it has been extensively studied in \cite{lin_stat}.
\end{remark}

Before stating this result, we need a technical lemma about weak $H^1$ convergence in the $\epsilon$-regularity region.
\begin{lemma}\label{lemma_conv}
 Let $u_i$ solve \eqref{eq_app1} and \eqref{eq_app_stat}, where $f_i\in L^2(\B 3 0)$ satisfies \eqref{eq_f} with $\cF$ and $\gamma$ independent of $i$. Assume that
\begin{enumerate}
 \item $\theta_{u_i}(0,2)\leq \epsilon_0$, where $\epsilon_0$ is the parameter in theorem \ref{th_app_eps_reg},
 \item $u_i \wto u$ in the weak $H^1(\B 3 0)$ sense,
 \item $f_i\wto f$ in the weak $L^2$ sense.
\end{enumerate}
Then $u_i$ converges to $u$ in the strong $H^1(\B {1}{0})$ sense and on $\B {1}{0}$ the map $u$ solves
\begin{gather}
 \Delta(u)=A(u)(\nabla u,\nabla u)+f\, 
\end{gather}
in the sense of distributions.
\end{lemma}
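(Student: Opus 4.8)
The plan is to use the $\epsilon$-regularity hypothesis (1) to upgrade the weak $H^1$ convergence to strong $H^1$ (indeed $C^{1,\alpha}_{loc}$) convergence on a slightly smaller ball, and then to pass to the limit in the distributional equations. First I would observe that, since $\theta_{u_i}(0,2)\leq \epsilon_0$ for all $i$, Theorem \ref{th_app_eps_reg} applies uniformly: each $u_i$ lies in $C^{0,\alpha}(\B{1}{0})$ with a uniform bound $\norm{u_i}_{C^{0,\alpha}(\B 1 0)}\leq C(m,\KN,\cF,\gamma)$. To get the stronger interior estimates needed for convergence in $H^1$ (and in the quadratic nonlinearity $A(u)(\nabla u,\nabla u)$), I would bootstrap: once $u_i$ is \hol continuous and $f_i\in L^2$, the equation \eqref{eq_app1} is a linear elliptic equation $\Delta u_i = g_i$ with $g_i = -A(u_i)(\nabla u_i,\nabla u_i) + f_i$ having a uniform $L^1$ bound (from the energy bound) and in fact, using the oscillation decay of $u_i$ together with the Morrey-type control \eqref{eq_f}, a uniform bound on $\int_{\B s x} \abs{g_i}$ of Morrey type $\leq C s^{m-2+\beta}$ on $\B{3/2}{0}$ for some $\beta>0$. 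Standard elliptic estimates of Morrey--Campanato type then give a uniform bound $\norm{\nabla u_i}_{C^{0,\beta/2}(\B{5/4}{0})}\leq C$, say, possibly after a further iteration.

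With this uniform $C^{1,\beta/2}$ bound in hand on $\B{5/4}{0}$, the Arzelà--Ascoli theorem gives a subsequence converging in $C^1(\B{1}{0})$; since the full sequence $u_i$ already converges weakly in $H^1$ to $u$, the limit must be $u$, so no subsequence is lost and $u_i\to u$ in $C^1_{loc}(\B{1}{0})$, in particular strongly in $H^1(\B{1}{0})$ and with $u\in C^{1,\beta/2}(\B 1 0)$. Then I would pass to the limit in the weak formulation of \eqref{eq_app1}: for any test function $\varphi\in C^\infty_c(\B{1}{0},\R^n)$,
\begin{gather}
 \int_{\B 1 0} \ton{-\nabla u_i\cdot \nabla\varphi + A(u_i)(\nabla u_i,\nabla u_i)\cdot\varphi - f_i\cdot\varphi} = 0\, .
\end{gather}
The first two terms converge by the strong $C^1$ convergence (using continuity of $A$ on a neighborhood of $N$ and uniform boundedness of $\nabla u_i$), and the last term converges because $f_i\wto f$ weakly in $L^2$ and $\varphi\in L^2$. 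Hence $u$ solves $\Delta u = A(u)(\nabla u,\nabla u) + f$ on $\B 1 0$ in the distributional sense, as claimed.

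The main obstacle I expect is the uniform \emph{gradient} bound, i.e.\ passing from the uniform $C^{0,\alpha}$ bound of Theorem \ref{th_app_eps_reg} to a bound strong enough to control the quadratic term $A(u_i)(\nabla u_i,\nabla u_i)$ in the limit; a priori $\nabla u_i$ is only bounded in $L^2$, and weak $L^2$ convergence of $\nabla u_i$ does not pass to this nonlinear term, which is exactly why strong $H^1$ convergence (or better) is needed. The resolution is the Morrey/Campanato bootstrap sketched above: the point is that the \hol decay of $u_i$ makes $\abs{\nabla u_i}^2$ lie in a Morrey space with a scaling strictly better than $L^1$, so elliptic regularity for $\Delta u_i = g_i$ yields the desired uniform interior estimate on $\nabla u_i$; this is essentially the quantitative form of the $\epsilon$-regularity proof referenced from \cite[proposition 4.1]{moser_appr_harm}, and I would invoke it in that form rather than redo the iteration in detail. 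A secondary subtlety is that one must check the uniform estimates hold on a ball strictly containing $\B 1 0$ (e.g.\ $\B{3/2}{0}$) so that the $C^1$ compactness and the limiting procedure are legitimate on the closed ball $\overline{\B 1 0}$; this is automatic since $\epsilon$-regularity from $\theta_{u_i}(0,2)\le\epsilon_0$ gives estimates on $\B 1 0$, and a Vitali-type covering by smaller balls centered in $\B{3/2}{0}$ — each with controlled energy by monotonicity of $\hat\theta$ and \eqref{eq_hatbounds} — extends them to the needed slightly larger region.
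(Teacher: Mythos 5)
There is a genuine gap: your argument hinges on upgrading the uniform $C^{0,\alpha}$ bound from Theorem \ref{th_app_eps_reg} to a uniform gradient H\"older bound $\norm{\nabla u_i}_{C^{0,\beta/2}}\le C$ via a Morrey--Campanato bootstrap, and this step fails for the class of forcings considered here. With \eqref{eq_f} one only obtains $\int_{\B s x}\abs{g_i}\le C s^{m-2+\beta}$ with $\beta = \min(2\alpha,\gamma/2)$; even after iterating the oscillation decay, $\beta$ cannot exceed $\gamma/2$, since the $f$-term caps the decay. A potential estimate on $\Delta u_i = g_i$ then bounds $\nabla u_i$ pointwise only when $\beta>1$, i.e.\ $\gamma>2$, whereas the hypotheses of the paper allow any $\gamma>0$ (and $f\in L^p$ with $m/2<p\le m$ gives precisely $\gamma\le 2$). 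This is exactly the caveat made in the introduction: beyond $C^{0,\alpha}$, higher regularity of $u$ depends on the regularity of $f$, and for $f$ merely in $L^2$ with the Morrey control \eqref{eq_f} a uniform $C^1$ bound on $u_i$ is simply not available. As a result your Arzel\`a--Ascoli compactness step in $C^1$ is unjustified, and with it both the strong $H^1$ convergence and the passage to the limit in the quadratic term.

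The paper gets around this with a much weaker ingredient: it never goes past the $C^{0,\alpha}$ bound, but instead proves the strong $H^1$ convergence directly by an integration-by-parts identity. For $\phi\in C^\infty_c(\B 1 0)$ one writes
\begin{gather}
-\int \ps{\nabla(u_i-u)}{\nabla u_i}\,\phi = \int \ps{u_i-u}{A(u_i)(\nabla u_i,\nabla u_i)+f_i}\,\phi + \int \ps{u_i-u}{\nabla u_i\cdot\nabla\phi}\, ,
\end{gather}
and both right-hand terms are controlled by $\norm{u_i-u}_{L^\infty(\B 1 0)}$ times the uniform $L^1$ bound on $\abs{\nabla u_i}^2$ and $L^2$ bound on $f_i$; since the $C^{0,\alpha}$ bound gives $\norm{u_i-u}_{L^\infty}\to 0$ after passing to a subsequence (and the limit is identified by weak $H^1$ convergence), the right side vanishes. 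Combined with the weak convergence $\int\ps{\nabla(u_i-u)}{\nabla u}\phi\to 0$, this yields $\nabla u_i\to \nabla u$ in $L^2_{loc}$. The passage to the limit in the equation then also uses only $L^\infty$ convergence of $u_i$ (for the continuity of $A$) and the newly established strong $H^1$ convergence. If you want to salvage a bootstrap-flavored argument, you would need to replace the $C^1$ target by some uniform $W^{2,p}$ or $W^{1,q}$ bound ($q>2$) together with a compactness mechanism; but that is more delicate than the elementary integration-by-parts trick and, as written, your proposal does not supply it.
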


\begin{proof}
 The proof relies on standard techniques, but for the sake of completeness we outline it here. Note that by the $\epsilon$-regularity theorem in theorem \ref{th_app_eps_reg}, we have that $\norm{u_i}_{C^{0,\alpha}(\B {1}{0})}\leq C$, with a uniform bound independent of $i$. Since $N$ is a compact manifold, we also have that $\norm{u_i}_{L^\infty(\B 3 0)}$ is uniformly bounded. Thus $u_i$ converges to $u$ in the strong $C^{0,\alpha/2}(\B {1}{0})$ sense.

 First of all, we prove that $\nabla u_i$ converges to $\nabla u$ in the strong $L^2(\B {1}{0})$ sense. For this purpose, it is sufficient to show that for all $\phi\in C^\infty_C(\B 1 0)$,
 \begin{gather}
  \int_{\B 1 0} \abs{\nabla (u_i-u)}^2\phi \to 0\, .
 \end{gather}
We can split this integral as
\begin{gather}
 \int \abs{\nabla (u_i-u)}^2\phi = \int \ps{\nabla (u_i-u)}{\nabla (u_i-u)}\phi=\int \ps{\nabla (u_i-u)}{\nabla u_i}\phi-\int \ps{\nabla (u_i-u)}{\nabla u}\phi\, .
\end{gather}
By weak convergence, the second integral tends to $0$ with $i$. As for the first, we have
\begin{gather}
 -\int \ps{\nabla (u_i-u)}{\nabla u_i}\phi = \int \ps{u_i-u}{\Delta u_i}\phi + \int \ps{u_i-u}{\nabla u_i\cdot \nabla \phi}=\\
 =\int \ps{u_i-u}{A(u_i)(\nabla u_i,\nabla u_i)+f_i}\phi + \int \ps{u_i-u}{\nabla u_i\cdot \nabla \phi}\, .
\end{gather}
Since $\norm{u_i-u}_{L^\infty(\B 1 0)}\to 0$, it is easy to see that this integral converges to $0$ as well. This completes the proof of the strong $H^1$ convergence.

As for the equation solved by $u$, we have for all smooth test functions $\phi\in C^\infty_C(\B 1 0)$:
\begin{gather}
 -\int \phi \Delta(u) = \int \ps{\nabla u}{\nabla \phi}=\lim_i \int \ps{\nabla u_i}{\nabla \phi} = \lim_i \int \phi \Delta (u_i)=\\
 = \lim_i \int \phi\qua{A(u_i)(\nabla u_i,\nabla u_i)+f_i} = \int \phi f +\lim_i \int \phi A(u_i)(\nabla u_i,\nabla u_i)\, .
\end{gather}
Since $A$ is continuous, and since $u_i\to u$ in $C^{\alpha/2}(\B {1}{0})$, we have $\abs{A(u_i)-A(u)}\to 0$. Here we interpret $A(\cdot)$ as a continuous function on $N$ which takes $\R^m\times \R^m$ into $\R^m$ in a bilinear way. Moreover, the strong convergence of $\nabla u_i$ to $\nabla u$ allow us to estimate
\begin{gather}
 \int \phi A(u_i)(\nabla u_i,\nabla u_i)=\int \phi \cur{\qua{A(u_i)-A(u)}(\nabla u_i,\nabla u_i) +A(u)(\nabla u_i,\nabla u_i)-A(u)(\nabla u,\nabla u)+A(u)(\nabla u,\nabla u)}=\\
 =\int \phi A(u)(\nabla u,\nabla u) + \norm{A(u_i)-A(u)}_\infty \int \phi \abs{\nabla u_i}^2+\int \phi \qua{A(u)(\nabla (u_i-u),\nabla u_i) +A(u)(\nabla u,\nabla (u_i-u))}\, .
\end{gather}
The strong convergence of $\nabla u_i$ to $\nabla u$ implies the thesis.
\end{proof}

Now we are in a position to prove the original proposition.

\begin{proof}[Proof of Proposition \ref{prop_weaklim}]
 The proof is based on the monotonicity formula for $u_i$ and the $\epsilon$-regularity theorem.
 
 Define the set
 \begin{gather}
  \Sigma=\bigcap_{r>0} \cur{x\in \overline{\B 1 0}\ \ s.t. \ \ \liminf_{i\to \infty} \theta_{u_i}(x,r) \geq \epsilon_0}\, ,
 \end{gather}
 where $\epsilon_0$ is taken from theorem \ref{th_app_eps_reg}. It is easy to see that this set is closed. Let $\B {r_j}{x_i}$ be a sequence of disjoint balls contained in $\B 3 0$ such that $x_j\in \Sigma$. For each $j$, there exists a subsequence of $u_i$ (still denoted with the same indexes for the sake of simplicity) such that
\begin{gather}\label{eq_u_ijk}
 \theta_{u_i}(x_j,r_j)\geq \epsilon_0>0\, .
\end{gather}
By a diagonal procedure, it is possible to find a subsequence of $u_i$ such that \eqref{eq_u_ijk} is valid for all $i$ and $j$. This implies immediately that
\begin{gather}
 \int_{\B 3 0}\abs{\nabla u_i}^2 \geq \sum_j r_j^{n-2}\theta_{u_i}(x_j,r_j) \geq \epsilon_0\sum_j r_j^{n-2}\, ,
\end{gather}
as desired. Note that evidently this uniform packing estimate implies upper estimates on the $n-2$ Minkowski content and Hausdorff measure.

For all $x\not \in \Sigma$, we can apply Lemma \ref{lemma_conv}, and obtain that $u$ is a \hol continuous function in a neighborhood of $x$ solving $\Delta u = A(u)(\nabla u,\nabla u)$. By standard estimates on continuous harmonic maps (see for example \cite[theorem 3.1]{moser_appr_harm}), $u$ is smooth in a neighborhood of $x$.

The last thing to check is that $u$ is globally weakly harmonic, and this is a consequence of the fact that $u$ is smooth and harmonic on $\Sigma^C$, and $\Sigma$ has bounded $n-2$ packing estimates. This fact implies that $\Sigma$ has $2$-capacity zero, and in particular for all compact sets $K\Subset \B 1 0$, there exists a sequence of smooth functions $\phi_i$ such that
\begin{gather}
 \phi_i(x) =1 \ \ \text{for } \ x \ \text{ in a neighborhood of }\ \Sigma\cap K\, , \quad \phi_i(x)=0 \ \ \text{ if } \ d(x,\Sigma\cap K)> i^{-1}\, , \quad \lim_{i\to \infty} \int_{\B 3 0} \abs{\nabla \phi_i}^2 =0\, .
\end{gather}
Since $u$ is smooth on $\Sigma^C$, we can write for all $\psi\in C^\infty_C(\B 1 0)$ that
\begin{gather}
 \int \psi \Delta(u) = \int \psi \phi_i \Delta(u) +\int \psi (1-\phi_i)\Delta(u)= \int \psi \phi_i \Delta(u) + \int \psi (1-\phi_i)A(u)(\nabla u,\nabla u)\, .
\end{gather}
As $i$ converges to $\infty$, $\phi_i$ converges to $1$ a.e. in $\B 3 0$. Thus by dominated convergence
\begin{gather}
 \lim_{i\to \infty} \int \psi (1-\phi_i) A(u)(\nabla u,\nabla u)=\int \psi A(u)(\nabla u,\nabla u)\, .
\end{gather}
Moreover, we have
\begin{gather}
 \limsup_{i\to \infty} \abs{\int \psi \phi_i \Delta(u)} \leq \limsup_{i\to \infty} \qua{\int \phi_i \abs{\nabla \psi \cdot \nabla u} + \int \psi \abs{\nabla \psi_i \cdot \nabla u}} =0\, .
\end{gather}

We are left to prove the unique continuation property. Note that if $u$ and $v$ are smooth, then unique continuation follows because $\Delta(u)=A(u)(\nabla u,\nabla u)=A(u)(\nabla u) [\nabla u]$ can be viewed as a linear equation on $u$ with smooth first order coefficient $A(u)(\nabla u)$.

Since $u$ and $v$ are assumed to be smooth outside the close set $\Sigma\cup \Sigma'$, and this close set is $n-2$ dimensional and thus non-disconnecting, we easily obtain that $u=v$ a.e. on $(\Sigma\cup\Sigma')^C$, and thus on the whole domain.

\end{proof}

\section{Main theorems}\label{s:main_theorems}
Now we are in a position to state precisely the main results we intend to obtain. The main theorem we want to prove is %\textcolor{red}{Should we prove more general packing estimate below?  Proof is the same up to at most half a page (doubtful even that much).}
\begin{theorem}\label{th_mink}
Let $u:\B 3 0 \subseteq \R^m \to N$ an approximately harmonic mapping solving \eqref{eq_app1} and \eqref{eq_app_stat} with \eqref{eq_f} such that $3^{2-m}\int_{\B 3 0}\abs{\nabla u}^2\leq \Lambda$. Then for each $\epsilon>0$ there exists $C_\epsilon(m,\KN,\Lambda,\cF,\gamma,\epsilon)$ such that for all $r\in (0,1]$:
\begin{gather}\label{eq_mink}
\Vol\ton{\B r {\cS^k_{\epsilon,r}(u) } \cap \B 1 0 }\leq C_\epsilon r^{n-k}\, .
\end{gather}
As a corollary, we can estimate for all $r\in (0,1]$:
\begin{gather}
 \Vol\ton{\B r {\cS^k_{\epsilon}(u) } \cap \B 1 0 }\leq C_\epsilon r^{n-k}\, ,
\end{gather}
moreover $\cS^k_\epsilon$ is $k$-rectifiable.
\end{theorem}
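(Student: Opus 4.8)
The plan is to follow the three-step scheme advertised in the introduction: first derive a $\beta_2$-type (Jones' number) estimate for the strata in terms of the normalized energy drop $\hat\theta(x,r)-\hat\theta(x,s)$, then feed this into a discrete Reifenberg-type theorem to get packing/rectifiability, and organize the whole thing through an inductive covering argument on scales. The key quantitative input is \eqref{eq_hat'} from Lemma \ref{lemma_monotone}: the radial energy $\int \frac{|(y-x)\cdot\nabla u|^2}{|y-x|^m}$ is controlled by the $\hat\theta$-drop, which plays exactly the role that the monotone $\theta$-drop plays for genuine stationary harmonic maps in \cite{nava+}. Because $\hat\theta$ and $\theta$ differ only by the lower-order $f$-terms, which by \eqref{eq_f}, \eqref{eq_hatbounds}, \eqref{eq_theta_unif} are bounded by $c\cF r^\gamma$, every place where the \cite{nava+} argument uses monotonicity of $\theta$ one substitutes $\hat\theta$ and carries along an additive error that is a positive power of the scale; summing a geometric-type series in the scales these errors stay bounded.

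Concretely, I would first prove the cone-splitting / quantitative rigidity statement: if $u$ is almost $0$-symmetric at a point in many scales with small radial energy, then on a ball $\B r x$ the energy measure $\mu = |\nabla u|^2\,d\Vol$ (suitably normalized, or rather a defect-type measure concentrated on the quantitative stratum) satisfies
\begin{gather}
\beta_\mu^2(x,r)^2 \lesssim r^{-k}\int_{\B r x} \bigl(\hat\theta(y,r)-\hat\theta(y,\rho r)\bigr)\,d\mu(y) + c\cF r^\gamma\, ,
\end{gather}
where $\beta_\mu^2(x,r)$ is the $L^2$-best $k$-plane approximation number of $\mu$; this is where the $L^2$-best subspace approximation theorem of \cite{nava+} is invoked verbatim, since it is a purely measure-theoretic statement. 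The only new feature is the harmless $c\cF r^\gamma$ correction inherited from \eqref{eq_hatbounds}. Second, I would set up the covering: given $\cS^k_{\epsilon,r}(u)\cap\B 1 0$, cover it at scale $r$ by balls of controlled overlap, and recursively refine, using at each step that points which remain in the stratum have a definite amount of energy drop $\hat\theta(x,s)-\hat\theta(x,t)\geq \delta(\epsilon)>0$ over a fixed ratio of scales (else $\B s x$ would be $(k+1,\epsilon)$-symmetric, by a compactness/contradiction argument combined with the weak convergence Proposition \ref{prop_weaklim} and $\epsilon$-regularity Theorem \ref{th_app_eps_reg}). This is precisely the place where the note claims to simplify \cite{nava+} via a new main covering lemma, so I would lean on that lemma: it converts the per-scale $\beta_2$ bound plus the uniform energy bound \eqref{eq_theta_unif} into the summability hypothesis of the discrete Reifenberg theorem.

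Third, apply the discrete-Reifenberg theorem of \cite{nava+} (again a measure-theoretic black box) to conclude $\Vol(\B r{\cS^k_{\epsilon,r}(u)}\cap\B 1 0)\leq C_\epsilon r^{n-k}$, which is \eqref{eq_mink}; the corollary for $\cS^k_\epsilon = \bigcap_r \cS^k_{\epsilon,r}$ is immediate by monotonicity of the sets and letting $r\to 0$, and $k$-rectifiability of $\cS^k_\epsilon$ falls out of the Reifenberg theorem, which produces not merely volume bounds but a rectifiable structure with uniformly bounded measure. I expect the main obstacle to be the covering/compactness step rather than the Reifenberg or best-subspace machinery: one must verify that the contradiction-compactness argument producing a definite energy drop on non-symmetric scales still works when $f_i\wto 0$ rather than $f_i=0$ — this is exactly what Proposition \ref{prop_weaklim} is designed for, but the subtlety is that the weak limit need not be stationary (as the remark after Proposition \ref{prop_weaklim} warns), so one must ensure the $0$-symmetry of the limiting tangent is still forced by the vanishing of the radial energy in the limit, using $\hat\theta$-monotonicity passed through the limit rather than $\theta$-monotonicity. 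Managing the error terms $c\cF r^\gamma$ uniformly across the countably many scales in the inductive cover — making sure they sum and do not degrade the exponent $n-k$ — is the bookkeeping price one pays, but it is the same mechanism as the almost-monotonicity errors in the curved-domain case described in Remark \ref{rem_curv}.
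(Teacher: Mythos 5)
Your overall three-step architecture ($\beta_2$ estimate via best-subspace approximation, then discrete Reifenberg, organized by an inductive covering) is exactly the paper's plan, and your diagnosis of where the approximate-harmonic generality enters (the $f$-terms $\lesssim\cF r^\gamma$ and the non-stationarity of weak limits) is on target. But the covering step contains a genuine error.

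You assert that ``points which remain in the stratum have a definite amount of energy drop $\hat\theta(x,s)-\hat\theta(x,t)\geq\delta(\epsilon)>0$ over a fixed ratio of scales (else $\B s x$ would be $(k+1,\epsilon)$-symmetric).'' This is false at a single point: $\hat\theta$-pinching at one point only forces $(0,\delta)$-symmetry (Proposition~\ref{prop_1pinch}), never $(k+1,\epsilon)$-symmetry. The paradigmatic example is $u(x)=x/|x|$ on $\R^3$, where $\theta(0,\cdot)$ is exactly constant yet $0\in\cS^0_\epsilon$; more generally any $0$-symmetric non-$(k+1)$-symmetric tangent gives a stratum point with vanishing pinching. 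The correct mechanism, which the paper's Covering Lemmas \ref{lemma_cover} and \ref{lemma_coverII} implement, is the \emph{effective cone-splitting dichotomy}: at each scale you look at the set $F$ of pinched points in a ball, and either $F$ $\rho$-effectively spans a $k$-dimensional plane (so Proposition~\ref{prop_k+1_pinch} forces the whole stratum in that ball into a thin tube around a $k$-plane, and Lemma~\ref{lemma_unipinch} gives uniform pinching along the plane, feeding the discrete Reifenberg), or $F$ sits near a $(k-1)$-plane (so its $\rho$-neighborhood has small $k$-content and can be discarded into a geometrically decaying ``bad ball'' contribution, as in Covering Lemma II). Energy drop holds not per-point across the stratum but only on the ``final'' balls that avoid $F$ --- and this is a consequence of how the cover is built, not of non-symmetry. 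Without this dichotomy the induction on energy levels in the last step has no base, because you cannot force every stratum point to lose energy.

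Two lesser points: (i) Your proposed $\beta_2$ estimate targets the energy measure $|\nabla u|^2\,d\Vol$ (or ``defect-type'' measure), but the paper applies Theorem~\ref{th_best_app} to an arbitrary finite measure $\mu$ supported in the ball, and in the covering it is the discrete packing measure $\mu=\omega_k\sum r_x^k\delta_x$ that is used; this is essential because the Reifenberg input/output controls $\sum r_x^k$, not a defect measure. (ii) Your plan to carry additive $c\cF r^\gamma$ corrections through every lemma and sum the resulting geometric series is workable in principle, but the paper sidesteps it entirely by a single preliminary rescaling (Section~\ref{sec_delta}): once you pass to scale $r_0$ with $\cF r_0^\gamma<\delta$, the blow-ups satisfy $\cF<\delta$ at all subsequent scales and the error terms never appear in the induction. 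This is cleaner and avoids having to re-derive the quantitative rigidity lemmas with explicit $\cF r^\gamma$ tails.
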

% \begin{remark}
%  As it will be clear from the proof, with little effort one can replace the Minkowski estimates with Packing estimates. In particular, one can prove that given any collection of disjoint balls $\B{r_i}{x_i}$ with $x_i\in \cS^{k}_{\epsilon,r}$ and $r_i\geq r$, then $\sum_i r_i^k \leq C_\epsilon$.
% \end{remark}

%\textcolor{red}{The above only holds for $r>0$ by the way it is phrased (not true if changed to packing estimate).  Should we also either add a theorem about the rectifiability of $S^k_\epsilon$, or at least add one extra line above saying:  Additionally, if $r=0$ then $S^k_{\epsilon,r}=S^k_\epsilon$ is $k$-rectifiable?}

As a corollary of this theorem we obtain the rectifiability of the strata.

\begin{theorem}\label{th_rect}
Let $u:\B 3 0 \subseteq \R^m \to N$ an approximately harmonic mapping solving \eqref{eq_app1} and \eqref{eq_app_stat} with \eqref{eq_f}. Then for all $k$ the strata $\cS^k(u)$ are $k$-rectifiable.
\end{theorem}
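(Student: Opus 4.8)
The plan is to deduce Theorem \ref{th_rect} directly from Theorem \ref{th_mink} together with the characterization of $\cS^k(u)$ in terms of non-$(k+1)$-symmetric tangent maps (the Remark following Definition \ref{d:stratification}, made precise in Lemma \ref{lemma_Sk}). First I would recall that $\cS^k(u) = \bigcup_{\epsilon>0}\cS^k_\epsilon(u)$, and that this is actually a countable union: $\cS^k(u) = \bigcup_{j\in\N}\cS^k_{1/j}(u)$, since $\cS^k_\epsilon(u)\subseteq \cS^k_{\epsilon'}(u)$ whenever $\epsilon\le\epsilon'$. A countable union of $k$-rectifiable sets is $k$-rectifiable, so it suffices to prove each $\cS^k_\epsilon(u)$ is $k$-rectifiable; but that is exactly the last assertion of Theorem \ref{th_mink}. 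Strictly speaking, Theorem \ref{th_mink} is stated on $\B 1 0$ with the normalized energy bound $\theta(0,3)\le\Lambda$ in place, whereas Theorem \ref{th_rect} asks for rectifiability of $\cS^k(u)$ on all of $\B 3 0$ with no a priori energy bound, so the remaining work is a localization and scaling argument.

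Concretely, the key steps in order are as follows. (i) Cover $\B 3 0$ by countably many balls $\B{s_\ell}{x_\ell}$ with $\B{3s_\ell}{x_\ell}\subseteq \B 3 0$ (for instance a Vitali-type or dyadic covering); since rectifiability is a countable and local property, it is enough to show $\cS^k(u)\cap \B{s_\ell}{x_\ell}$ is $k$-rectifiable for each $\ell$. (ii) On each such ball, use the blow-up map $T^u_{x_\ell,s_\ell}$, which by the discussion in the excerpt (see equations \eqref{eq_T1}--\eqref{eq_Tf}) is again an approximately harmonic map on $\B 3 0$ with rescaled inhomogeneity $\tilde f(y)=s_\ell^2 f(x_\ell+s_\ell y)$ still satisfying \eqref{eq_f} with the same $\cF$ and $\gamma$; its normalized energy at scale $3$ is $\theta_{T}(0,3)=\theta_u(x_\ell,3s_\ell)$, which is finite since $u\in H^1(\B 3 0)$, giving a legitimate (ball-dependent) bound $\Lambda_\ell$. (iii) Note that the stratification is compatible with blow-ups: $x\in\cS^k(u)$ and $x\in\B{s_\ell}{x_\ell}$ iff the corresponding point of $T^u_{x_\ell,s_\ell}$ lies in $\cS^k(T^u_{x_\ell,s_\ell})$, because symmetry of tangent maps is scale- and translation-invariant. (iv) Apply Theorem \ref{th_mink} to $T^u_{x_\ell,s_\ell}$ to conclude $\cS^k_\epsilon(T^u_{x_\ell,s_\ell})$ is $k$-rectifiable for every $\epsilon$, hence $\cS^k(T^u_{x_\ell,s_\ell})\cap\B 1 0$ is $k$-rectifiable, hence $\cS^k(u)\cap\B{s_\ell}{x_\ell}$ is $k$-rectifiable; taking the union over $\ell$ finishes the proof.

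The only genuinely nontrivial point is step (iii), i.e. checking that $\cS^k(u)=\bigcup_\epsilon\cS^k_\epsilon(u)$ really does coincide with the set of points having no $(k+1)$-symmetric tangent map, and that this description behaves well under the blow-up rescaling; this is the content of Lemma \ref{lemma_Sk}, which I would invoke. It rests on the almost monotonicity of $\hat\theta$ from Lemma \ref{lemma_monotone} (which guarantees existence of tangent maps and controls them), the weak convergence result Proposition \ref{prop_weaklim} (so that blow-up limits are weakly harmonic), and a standard compactness argument showing that if every ball at some scale fails to be $(k+1,\epsilon)$-symmetric then no tangent can be $(k+1)$-symmetric, and conversely. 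Everything else — the covering, the scaling of $f$, the countable union — is routine. I therefore expect the bulk of the real work to have already been done in Theorem \ref{th_mink} and Lemma \ref{lemma_Sk}, with Theorem \ref{th_rect} being essentially a formal consequence.
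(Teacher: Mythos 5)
Your proposal is correct and follows the same route as the paper: $\cS^k(u)=\bigcup_j \cS^k_{1/j}(u)$ is a countable union of the quantitative strata, each of which is $k$-rectifiable by the final assertion of Theorem \ref{th_mink} (which the paper proves in the very section devoted to Theorem \ref{th_rect}, so citing it is legitimate and not circular). The localization and blow-up discussion is sound but somewhat over-engineered: $u\in H^1(\B 3 0)$ already supplies a finite (ball-dependent) energy bound $\Lambda=\theta(0,3)$, and the paper treats the singular strata as subsets of $\B 1 0$, so Theorem \ref{th_mink} applies directly without rescaling; also, step (iii) is really an elementary change-of-variables observation about the quantitative strata $\cS^k_{\epsilon,r}$ under $T^u_{x,s}$, not something that needs the tangent-map characterization of Lemma \ref{lemma_Sk}.
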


\vspace{5mm}

It is worth noticing that simple adaptation of the proofs described here allow to obtain a slightly better result. In particular, we can obtain uniform packing estimates instead of Minkowski estimates. Since the proof of this result requires no additional idea, but would make the exposition more technical and confusing, we state the result here and leave the details to the reader.
\begin{theorem}\label{th_pack}
Let $u:\B 3 0 \subseteq \R^m \to N$ an approximately harmonic mapping solving \eqref{eq_app1} and \eqref{eq_app_stat} with \eqref{eq_f} such that $3^{2-m}\int_{\B 3 0}\abs{\nabla u}^2\leq \Lambda$. Let $\cur{\B {r_x}{x}}_{x\in \cC}$ be a collection of pairwise disjoint balls with $r_x\in (0,1]$ and for all $x$, $x\in \cS^k_{\epsilon,r_x}$. Then there exists a constant $C_\epsilon(m,\KN,\Lambda,\cF,\gamma,\epsilon)$ such that
\begin{gather}
 \sum_{x\in \cC} r_x^n \leq C_\epsilon\, .
\end{gather}

\end{theorem}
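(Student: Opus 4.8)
The plan is to run the same covering scheme that yields the Minkowski estimate \eqref{eq_mink} in Theorem \ref{th_mink}, but to keep track of a packing quantity rather than a volume throughout. The point is that all the intermediate estimates --- the $\beta_2$-bound coming from monotonicity and the $L^2$-best-subspace approximation theorem, and the Reifenberg-type covering lemma --- are already phrased as bounds on sums of powers of radii, so the packing statement is in fact the \emph{natural} output of the machinery and the Minkowski bound is a corollary of it. Concretely, I would fix a collection $\{\B{r_x}{x}\}_{x\in\cC}$ of pairwise disjoint balls with $r_x\in(0,1]$ and $x\in\cS^k_{\epsilon,r_x}(u)$, and show $\sum_{x\in\cC} r_x^n\le C_\epsilon$ by the same induction on scale used for \eqref{eq_mink}.

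First I would reduce, by a standard dyadic decomposition of $\cC$ according to the size of $r_x$ and by Vitali covering, to the case where all the $r_x$ are comparable to a fixed dyadic scale $s$; then $\sum_{x\in\cC} r_x^n \lesssim s^{n-k}\cdot \#\cC \cdot s^{k}$, so it suffices to prove a uniform bound $\#\cC \le C_\epsilon\, s^{-k}$ on the number of disjoint $s$-balls centered at points of $\cS^k_{\epsilon,s}(u)$ inside $\B 1 0$. This is exactly the content that the covering lemma of \cite{nava+} (in the simplified form announced in the introduction) produces: starting from $\B 1 0$ one builds, at each dyadic scale down to $s$, a covering by balls on which the normalized energy $\hat\theta$ drops by a definite amount unless the ball is already $(k+1,\epsilon)$-almost-symmetric, and the $\beta_2$-Jones-number estimate (derived from \eqref{eq_hat'} and the best-subspace approximation) feeds the discrete Reifenberg theorem to bound the number of ``bad'' balls at each stage by a geometric factor times the previous count, with the total controlled by the fixed energy budget $\theta(0,3)\le\Lambda$ and the error terms $\cF r^\gamma$ from Lemma \ref{lemma_monotone}. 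The only difference from the harmonic case is that $\theta$ is replaced by $\hat\theta$ and one carries the \eqref{eq_hatbounds}--\eqref{eq_theta_unif} error terms along; since $\gamma>0$ these are summable over dyadic scales and contribute only to the constant.

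The main obstacle, as in \cite{nava+}, is ensuring that the hypotheses of the discrete Reifenberg theorem are verified uniformly along the induction --- that is, that the $\beta_2$-content estimate $\sum_{\B r y\in\text{cover}} \beta_2(y,r)^2 \, r^{k} \le \delta$ holds with $\delta$ small enough, uniformly over subcollections and scales. This requires the sharp form of the monotonicity-to-distortion estimate: points where $u$ fails to be $(k+1,\epsilon)$-symmetric at scale $r$ must have $\hat\theta(y,r)-\hat\theta(y,sr)\ge \eta(\epsilon)>0$ for a controlled ratio $s$, and this ``pinching'' is converted via \eqref{eq_hat'} and the best-approximation theorem into the $\beta_2$-bound; tracking the $\cF r^\gamma$ errors so that they do not swamp $\eta(\epsilon)$ at small scales is the delicate bookkeeping, but it is handled exactly as the error terms are handled in the proof of Theorem \ref{th_mink}. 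Once the $\beta_2$-content bound and the Reifenberg packing estimate are in place at every scale, summing the geometric recursion gives $\#\cC\le C_\epsilon s^{-k}$ and hence $\sum_{x\in\cC} r_x^n\le C_\epsilon$, which is the claim; no new idea beyond the proof of \eqref{eq_mink} is needed, only the observation that one never actually had to pass to volumes.
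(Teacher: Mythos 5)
The statement as printed contains a typo: with the balls $\{\B{r_x}{x}\}_{x\in\cC}$ pairwise disjoint inside $\B 3 0$, the sum $\sum r_x^m\le c(m)$ holds automatically by volume (and a fortiori $\sum r_x^n$ if $n\ge m$, since $r_x\le 1$), so the inequality $\sum r_x^n\le C_\epsilon$ as written would be trivial and would not be the ``slightly better result'' the paper advertises. The intended statement, matching \cite{nava+}, is the $k$-dimensional packing bound $\sum_{x\in\cC} r_x^k\le C_\epsilon$.

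With that correction made, your first step --- dyadic decomposition of $\cC$ into scales $r_x\sim 2^{-j}$ followed by the single-scale count $\#\{x:r_x\sim 2^{-j}\}\le C_\epsilon 2^{jk}$ --- fails. At each dyadic scale you get $\sum_{r_x\sim 2^{-j}} r_x^k\le C_\epsilon$, and summing over $j$ gives $\sum_j C_\epsilon=\infty$. The Minkowski bound of Theorem \ref{th_mink} is a one-scale-at-a-time statement and cannot be summed scale by scale to yield a $k$-dimensional packing estimate; that is precisely why the packing statement is genuinely stronger and why the paper says it requires a more technical argument. The mechanism that makes the covering argument deliver a packing bound rather than merely a Minkowski bound is a \emph{stopping-time} modification of the inductive covering (Lemmas \ref{lemma_cover} and \ref{lemma_coverII}): one refines the cover as in the Minkowski proof, but stops the refinement on any ball that has come down to the scale $r_x$ of some center $x\in\cC$ contained in it, rather than stopping at a fixed prescribed scale $r$. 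The discrete Reifenberg theorem \ref{th_disc_reif} and the $\beta_2$ estimate of Theorem \ref{th_best_app} are indifferent to the stopping rule (they only require the pinching bound, which holds at all scales between $r_x$ and $1$ by the choice of $\delta$), so the resulting cover still satisfies $\sum r_y^k\le C(m)$; each such ball then absorbs at most $c(m)$ of the disjoint balls of $\cC$ at comparable scale, and summing over the finitely many energy-drop steps gives $\sum_{x\in\cC} r_x^k\le C_\epsilon$ directly, across all scales simultaneously. Your second and third paragraphs describe the right ingredients, but the opening reduction to a single scale must be replaced by this stopping-time argument.
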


\vspace{1cm}

In the following sections, we will develop all the techniques needed for the proof of these results. First of all, we will give a quantitative link between $0$-symmetry (and higher order symmetries) and the properties $\hat \theta$. We will then briefly recall without proof the Reifenberg theorems which we will use, and then we will turn to the $L^2$-best subspace approximation theorem and the covering arguments needed to complete the proofs.

\section{\texorpdfstring{Quantitative $\epsilon$-regularity theorems}{Quantitative epsilon-regularity theorems}}
The aim of this section is to show that there's a quantitative link between $\hat \theta$ and the almost symmetries of the map $u$. First, we are going to show an adaptation of \cite[theorem 3.3]{ChNa2}. In particular, we will show that if $\hat \theta(x,\cdot)$ is sufficiently \textit{pinched} on two consecutive scales (i.e., if $\hat\theta(x,r)-\hat\theta(x,r/2)$ is small enough), then $\B {r}{x}$ will be $(0,\delta)$-symmetric.

We will then turn our attention to higher order symmetries. We will show a very natural sufficient condition for $\B {r}{x}$ to be $(k,\delta)$-symmetric based on the geometry of the ``pinched points'' $y\in \B r x$ such that $\hat \theta(y,r)-\hat \theta(y,r/2)$ is small. Note that while a single pinched point is enough to guarantee $0$-symmetries, we will ask the set of pinched points to ``effectively span'' some $k$-dimensional affine subspace in order to guarantee higher order symmetry.

\subsection{Quantitative homogeneity}

It is easy to see that if $u$ is stationary harmonic and $\hat \theta(0,1)=\hat \theta(0,1/2)$, then $u$ must be $0$-symmetric. This is a direct consequence of \eqref{eq_theta'} and the unique continuation property for harmonic maps. By an easy compactness argument, we can see that this characterization of $0$-symmetric map has a rigidity property, in the sense that if $\hat \theta(0,1)-\hat \theta(0,1/2)$ is small enough, then $u$ is close to a $0$-symmetric map. In the case of an approximately harmonic map, this statement remains true up to focusing on a small enough scale, so that the error coming from $f$ becomes small enough.

\begin{remark}
 Throughout this section, we will assume that $u\in H^1(\B 3 0, N)$ is an approximately harmonic map satisfying \eqref{eq_app1} and \eqref{eq_app_stat} with \eqref{eq_f}. Moreover, we will also assume the uniform energy bound $\hat \theta(0,3)\leq \Lambda$.
\end{remark}

\begin{proposition}\label{prop_1pinch}
 Let $u\in H^1(\B 3 0)$ be a solution to \eqref{eq_app1} and \eqref{eq_app_stat} with \eqref{eq_f} and $\hat \theta(0,3)\leq \Lambda$. Then for all $\delta_1>0$, there exist $\delta_2=\delta_2(m,\Lambda,\gamma,\delta_1)>0$ such that if $\cF\leq \delta_2$ and for some $x\in \B 1 0$
 \begin{gather}\label{eq_thetapinch_1}
  \hat \theta(x,r)-\hat \theta(x,r/2)<\delta_2\, ,
 \end{gather}
then $\B r x$ is $(0,\delta_1)$-symmetric.
\end{proposition}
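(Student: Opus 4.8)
\textbf{Proof strategy for Proposition \ref{prop_1pinch}.}

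The plan is a standard compactness-contradiction argument, using the almost monotonicity of $\hat\theta$ (Lemma \ref{lemma_monotone}), the weak convergence result (Proposition \ref{prop_weaklim}), and the unique continuation property for weakly harmonic maps. Suppose the statement fails. Then we can fix $\delta_1>0$ and produce a sequence $u_i\in H^1(\B 3 0,N)$ solving \eqref{eq_app1} and \eqref{eq_app_stat} with associated data $f_i$ satisfying \eqref{eq_f} with constant $\cF_i\to 0$ (and $\gamma$ fixed), with uniform energy bound $\hat\theta_{u_i}(0,3)\le\Lambda$, together with points $x_i\in\B 1 0$ and radii $r_i$ such that $\hat\theta_{u_i}(x_i,r_i)-\hat\theta_{u_i}(x_i,r_i/2)<1/i$, yet $\B{r_i}{x_i}$ is not $(0,\delta_1)$-symmetric for $u_i$. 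Rescaling via the blow-up $T^{u_i}_{x_i,r_i}$ (recall the scaling \eqref{eq_T1}--\eqref{eq_Tf}, which shows the rescaled data $\tilde f_i$ still satisfies \eqref{eq_f} with the same $\cF_i\to 0$), we may assume $x_i=0$, $r_i=1$, so that $\hat\theta_{u_i}(0,1)-\hat\theta_{u_i}(0,1/2)\to 0$ and no $\B 1 0$ is $(0,\delta_1)$-symmetric.

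First I would extract a weak $H^1$-limit. By the uniform energy bound \eqref{eq_theta_unif} and the bound \eqref{eq_hatbounds} (with, say, $\operatorname a=1$), the $L^2$-energies $\int_{\B 3 0}|\nabla u_i|^2$ are uniformly bounded; since $N$ is compact the $u_i$ are also uniformly bounded in $L^\infty$, and $f_i\wto 0$ after passing to a subsequence. Applying Proposition \ref{prop_weaklim}, we obtain (a subsequence and) a weakly harmonic limit map $u$, with $u_i\to u$ strongly in $H^1_{loc}\cap C^{0,\alpha/2}_{loc}$ away from a closed set $\Sigma$ of finite $(m-2)$-packing content. In particular $u_i\to u$ strongly in $L^2(\B 1 0)$, so the non-$(0,\delta_1)$-symmetry passes to the limit: $u$ is not $(0,\delta_1/2)$-symmetric, say, and in any case $u$ is not $0$-symmetric on $\B 1 0$.

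Next I would show the limit $u$ \emph{is} $0$-symmetric, giving the contradiction. The key point is that $\hat\theta_{u_i}(0,s)\to\theta_u(0,s)$ for a.e. $s\in(0,1]$: the correction terms in \eqref{eq_deph_hat} involve $f_i$ and are controlled by $\cF_i^{1/2}$-type quantities (via \eqref{eq_mono1}, \eqref{eq_mono2}) which vanish, while the strong $H^1_{loc}$ convergence off $\Sigma$ plus the energy bound near $\Sigma$ (which has codimension $2$, hence $2$-capacity zero, so carries no energy in the limit in the relevant sense) gives $\theta_{u_i}(0,s)\to\theta_u(0,s)$; one must be slightly careful here and may instead argue directly with the defect measure $\nu$, noting $\theta$ is monotone for each $u_i$ (up to the $\hat\theta$-correction) so the limit of the monotone functions $\hat\theta_{u_i}(0,\cdot)$ is monotone and agrees with $\theta_u(0,\cdot)+$(defect contribution). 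From the pinching $\hat\theta_{u_i}(0,1)-\hat\theta_{u_i}(0,1/2)\to 0$ together with \eqref{eq_hat'}, we get $\int_{\B 1 0\setminus\B{1/2}0}\frac{|y\cdot\nabla u_i|^2}{|y|^m}\to 0$; by the strong $H^1_{loc}$ convergence on the annulus $\B 1 0\setminus\B{1/2}0$ (which is compactly contained away from the origin, but $\Sigma$ may intersect it — here one uses that the defect measure also satisfies a monotonicity/radial-vanishing identity, so $\nu$ is itself $0$-homogeneous), we conclude $\int_{\B 1 0\setminus\B{1/2}0}\frac{|y\cdot\nabla u|^2}{|y|^m}=0$, i.e. $\partial_r u=0$ a.e. on that annulus, hence $u$ is radially constant there, and by the monotonicity identity applied to $u$ (which holds for the weakly harmonic limit with its defect measure) $u$ is $0$-symmetric on all of $\B 1 0$. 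Then unique continuation (Proposition \ref{prop_weaklim}(\ref{it_uc}), comparing $u$ with its $0$-homogeneous extension of $u|_{\B{1/2}0\setminus\B{1/4}0}$) forces $u$ to be genuinely $0$-symmetric on $\B 1 0$, contradicting the previous paragraph.

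\textbf{Main obstacle.} The delicate step is the third one: controlling the passage to the limit of $\hat\theta$ and of the radial-energy quantity across the singular set $\Sigma$ / the defect measure $\nu$. One cannot simply invoke strong $H^1$ convergence on the closed annulus since $\Sigma$ may meet it; the fix is to establish (or cite) that the defect measure of a sequence of approximately harmonic maps is itself stationary and $0$-homogeneous whenever the rescaled limit is, so that the radial derivative vanishes $\nu$-a.e. as well, or alternatively to carry out the whole argument at the level of the monotone functions $\hat\theta_{u_i}(0,\cdot)$ and their pointwise limit, using that a monotone limit of monotone functions with vanishing total variation on $[1/2,1]$ is constant there, and then feeding this back through the limiting monotonicity formula for $u$ together with its defect. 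Everything else is routine compactness and the cited $\epsilon$-regularity and weak-convergence machinery.
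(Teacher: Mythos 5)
Your overall strategy matches the paper's: a compactness--contradiction argument, blow up via $T_i=T^{u_i}_{x_i,r_i}$, extract a weak $H^1$-limit $T$ using Proposition~\ref{prop_weaklim}, show $T$ is $0$-homogeneous, and derive a contradiction via strong $L^2$ convergence. However, the ``main obstacle'' you flag --- the need to track $\hat\theta_{u_i}(0,\cdot)$ through $\Sigma$ and to prove that the defect measure $\nu$ is itself stationary and $0$-homogeneous --- is a red herring and introduces an unnecessary gap. You do not need strong $H^1$ convergence on the annulus, nor any structure of $\nu$, nor even convergence of $\hat\theta_{u_i}$ to $\theta_T$. From the pinching assumption and \eqref{eq_hat'} you have
\begin{gather*}
\int_{\B 1 0 \setminus \B {1/2}{0}} \abs{y\cdot \nabla T_i}^2 \leq c(m)\qua{\hat\theta_{T_i}(0,1)-\hat\theta_{T_i}(0,1/2)}\to 0\, ,
\end{gather*}
and since $\nabla T_i \wto \nabla T$ weakly in $L^2$ and $y$ is bounded on the annulus, $y\cdot\nabla T_i\wto y\cdot\nabla T$ weakly in $L^2$ there; weak lower semicontinuity of the $L^2$ norm then gives $\int_{\B 1 0\setminus\B{1/2}0}\abs{y\cdot\nabla T}^2=0$ directly. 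The presence of $\Sigma$ or a nonzero defect measure can only make the $\liminf$ of the energies larger than the limit energy, which is harmless when you are proving something is zero. This is precisely what the paper does: radial invariance on the annulus follows from lower semicontinuity alone, then the unique continuation property from Proposition~\ref{prop_weaklim}\eqref{it_uc} (comparing $T$ to its $0$-homogeneous extension $T'$, both weakly harmonic and smooth off a codimension-$2$ set) promotes this to $0$-symmetry on all of $\B 1 0$, and strong $L^2$ convergence (Rellich) yields the contradiction with non-$(0,\delta_1)$-symmetry. So your first two paragraphs and the endgame are right; the third paragraph should simply replace the defect-measure discussion with lower semicontinuity, after which there is no delicate step left.
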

\begin{proof}
Consider by contradiction a sequence of maps $u_i$ and a sequence of balls $\B{r_i}{x_i}$ such that $\hat \theta (x_i,r_i)-\hat\theta(x_i,r_i/2)<i^{-1}$, $\cF<i^{-1}$, but such that the balls $\B {r_i}{x_i}$ are not $(0,\delta_1)$-symmetric. Let $T_i(y)=u(x_i+r_i(y))$ be their blow-up maps, and recall that $\hat \theta_{T_i}(0,s)=\hat \theta_{u_i}(x_i,sr)$. By \eqref{eq_theta_unif}, $T_i$ have uniform $H^1$ bounds, and so there exists a weakly convergence subsequence, for convenience denoted with the same index. Note that $T_i$ are approximately harmonic maps solving
\begin{gather}
 \Delta(T_i)= A(T_i)(\nabla T_i,\nabla T_i)+\tilde f_i\, , \quad \tilde f_i(y)= r_i^2 f_i(x_i+r_iy)\, .
\end{gather}
By \eqref{eq_f}, we have $\int_{\B 1 0} \abs{\tilde f_i}^2 = r^{4-m}\int_{\B {r_i}{x_i}} \abs f^2 \leq i^{-1} r_i^\gamma \to 0$. Thus $T$ is weakly harmonic by Proposition \ref{prop_weaklim} and smooth away from a close set $\Sigma$ of dimension $n-2$. 

Moreover, by \eqref{eq_hat'}, we have the estimate
\begin{gather}
 \int_{\B 1 0 \setminus \B {1/2}{0}} \abs{y\cdot \nabla T_i}^2 \leq c(m) \qua{\hat \theta_{T_i}(0,1)-\hat \theta_{T_i}(0,1/2)}\to 0\, .
\end{gather}
Thus $T$ is radially invariant on $\B{1}0\setminus \B {1/2}{0}$, and by unique continuation it is homogeneous. Indeed, let $T'$ be the homogeneous continuation of $T$ over the whole $\B 1 0$. It is easily seen that both maps are weakly harmonic and smooth away from an $n-2$ dimensional set, thus we can apply point \eqref{it_uc} of proposition \ref{prop_weaklim} and prove that $T=T'$.  Since $T_i$ converges weakly in $H^1$ to $T$, and strongly in the $L^2$ norm, we have reached a contradiction.
\end{proof}

Note that, as a corollary of the proof, we obtain a characterization of all tangent maps for approximate harmonic function. 
\begin{corollary}\label{cor_weak_harm_limit}
 Let $u$ be as above, then all of its tangent maps are weakly harmonic homogeneous maps. In particular, for all $x\in \B 1 0$, and for all sequences $r_i\to 0$, there exists a subsequence $r_{i_j}$ such that $T_j = T^u_{x,r_{i_j}}$ converges in the weak $H^1_{loc}(\R^m,N)$ sense to a $0$-symmetric weakly harmonic map which is smooth on an open, dense, connected subset.
\end{corollary}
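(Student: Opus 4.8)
This statement is a byproduct of the compactness argument already carried out in the proof of Proposition~\ref{prop_1pinch}, and my plan is to rerun that argument without the contradiction hypothesis. Fix $x\in\B{1}{0}$ and a sequence $r_i\to 0$, and set $T_i=T^u_{x,r_i}$. By \eqref{eq_T1}--\eqref{eq_Tf}, each $T_i$ is an approximately harmonic map with source $\tilde f_i(y)=r_i^2 f(x+r_iy)$, and rescaling \eqref{eq_f} gives $\int_{\B{R}{0}}\abs{\tilde f_i}^2=r_i^{4-m}\int_{\B{Rr_i}{x}}\abs f^2\leq\cF(Rr_i)^\gamma\to 0$ for every fixed $R$. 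The uniform bound \eqref{eq_theta_unif} shows $\hat\theta_{T_i}(0,R)=\hat\theta_u(x,Rr_i)$ is bounded uniformly in $i$ for each fixed $R$, so $\{T_i\}$ has locally uniformly bounded Dirichlet energy; extract a subsequence $T_{i_j}\rightharpoonup T$ weakly in $H^1_{loc}(\R^m,N)$. Applying Proposition~\ref{prop_weaklim} along a dyadic exhaustion of $\R^m$ — rescaling $\B{3\cdot 2^\ell}{0}$ to $\B{3}{0}$, where the source terms still converge (in fact strongly) to $0$ — and diagonalizing over $\ell$, we conclude that $T$ is weakly harmonic on all of $\R^m$ and smooth away from a closed set $\Sigma$ with finite $(n-2)$-packing content on compacta.

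It remains to prove homogeneity. By \eqref{eq_hat'}, the map $r\mapsto\hat\theta_u(x,r)$ is nondecreasing and nonnegative on $(0,1]$, so $\Theta_0:=\lim_{r\to 0^+}\hat\theta_u(x,r)$ exists and is finite by \eqref{eq_theta_unif}. Hence for each fixed $R>0$,
\[
\hat\theta_u(x,Rr_i)-\hat\theta_u(x,Rr_i/2)\longrightarrow\Theta_0-\Theta_0=0\, ,
\]
and feeding this into \eqref{eq_hat'} (applied to $u$ at $x$ between the radii $Rr_i/2$ and $Rr_i$, then rescaled by $r_i$) yields
\[
\int_{\B{R}{0}\setminus\B{R/2}{0}}\frac{\abs{y\cdot\nabla T_i}^2}{\abs y^m}\leq\hat\theta_u(x,Rr_i)-\hat\theta_u(x,Rr_i/2)\longrightarrow 0\, .
\]
Since $\abs y^m\leq R^m$ on that annulus this gives $\int_{\B{R}{0}\setminus\B{R/2}{0}}\abs{y\cdot\nabla T_i}^2\to 0$, and since $y\cdot\nabla T_{i_j}\rightharpoonup y\cdot\nabla T$ weakly in $L^2(\B{R}{0}\setminus\B{R/2}{0})$, lower semicontinuity of the $L^2$ norm forces $\int_{\B{R}{0}\setminus\B{R/2}{0}}\abs{y\cdot\nabla T}^2=0$. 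Letting $R=2^\ell$ range over $\ell\in\Z$ covers $\R^m\setminus\{0\}$, so $\partial_{r_0}T=0$ a.e.; that is, $T$ is homogeneous with respect to the origin, hence $0$-symmetric.

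Finally, $\Sigma$ is closed with finite $(n-2)$-packing content on compacta, hence Lebesgue-null; therefore $\R^m\setminus\Sigma$ is open and dense, and, being the complement of an $(n-2)$-dimensional set and thus non-disconnecting, it is also connected. This is the open, dense, connected set on which $T$ is smooth. I expect the only genuine bookkeeping to be the passage from the local statement of Proposition~\ref{prop_weaklim} to a single limit map $T$ and a single defect set $\Sigma$ over all of $\R^m$ via the exhaustion-and-diagonal argument; everything else is an immediate consequence of the monotonicity formula of Lemma~\ref{lemma_monotone} and of Proposition~\ref{prop_weaklim}.
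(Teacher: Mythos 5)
Your proposal is correct and, at its core, runs exactly the compactness-plus-monotonicity argument that underlies the paper's proof of Proposition~\ref{prop_1pinch}; the scaling identity you use for $\tilde f_i$ and the change of variables in \eqref{eq_hat'} both check out, the existence of $\Theta_0=\lim_{r\to0^+}\hat\theta_u(x,r)$ follows from the monotonicity and nonnegativity established in Lemma~\ref{lemma_monotone}, and the open/dense/connected observation about $\R^m\setminus\Sigma$ is exactly the one the paper itself invokes when proving unique continuation in Proposition~\ref{prop_weaklim}.

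The one place where you genuinely depart from the paper's route is the homogeneity step. The paper (in the proof of Proposition~\ref{prop_1pinch}, of which this corollary is declared a byproduct) extracts radial invariance on a single annulus $\B1 0\setminus\B{1/2}0$ from the one available pinching pair, and then invokes the unique continuation property, item~\eqref{it_uc} of Proposition~\ref{prop_weaklim}, to propagate homogeneity to the whole ball. You instead exploit the fact that, in the tangent-map setting, the monotone quantity $\hat\theta_u(x,\cdot)$ has a limit at $0$, so the pinching $\hat\theta_u(x,Rr_i)-\hat\theta_u(x,Rr_i/2)\to0$ holds on \emph{every} dyadic annulus, and you kill $\partial_{r_0}T$ directly on all of $\R^m\setminus\{0\}$ by weak lower semicontinuity. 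This buys you a proof of homogeneity that does not rely on the (somewhat delicate) unique continuation property for weakly harmonic maps smooth away from a thin set; in exchange you need the full existence of the density $\Theta_0$, which the one-scale pinching in Proposition~\ref{prop_1pinch} does not give. Both are correct; yours is arguably cleaner in this specific corollary setting. The remaining bookkeeping — the dyadic exhaustion and diagonalization needed to pass from Proposition~\ref{prop_weaklim} (stated on $\B3 0$) to a single limit $T$ and a single defect set $\Sigma$ on all of $\R^m$ — is exactly the right thing to flag, and you have described it correctly; if you want to be fully precise there, you should also note that after rescaling $\B{3\cdot2^\ell}0$ to $\B3 0$ the relevant constant $\cF$ in \eqref{eq_f} for the rescaled sources is $\cF(2^\ell r_i)^\gamma$, which is uniformly bounded in $i$ for each fixed $\ell$, so that the hypotheses of Proposition~\ref{prop_weaklim} are indeed met at each stage of the exhaustion.
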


We close this section with the characterization of $\cS^k(u)$ promised above.
\begin{lemma}\label{lemma_Sk}
 Let $u$ be an approximate harmonic map with \eqref{eq_f}. Then
 \begin{gather}
  \cS^k(u) = \bigcup_{\epsilon>0}\cS^k_{\epsilon}=\bigcup_{\epsilon>0} \bigcap_{r>0}\cS^k_{\epsilon,r} = \cur{x\in \Omega \ \ s.t. \ \ \text{no tangent maps at } x \text{ is } k+1-\text{symmetric}}\, .
 \end{gather}
\end{lemma}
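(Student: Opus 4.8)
The plan is to prove the set-theoretic equality $\cS^k(u) = \cX$, where $\cX := \{x\in\Omega : \text{no tangent map at }x\text{ is }(k+1)\text{-symmetric}\}$, by showing the two inclusions $\cS^k(u)\subseteq\cX$ and $\cX\subseteq\cS^k(u)$. The first chain of equalities $\cS^k(u)=\bigcup_{\epsilon>0}\cS^k_\epsilon=\bigcup_{\epsilon>0}\bigcap_{r>0}\cS^k_{\epsilon,r}$ is just the definition, so the content is the last equality. Throughout I will use Corollary \ref{cor_weak_harm_limit}, which guarantees that tangent maps exist (along subsequences of any scale sequence $r_i\to 0$) and are $0$-symmetric weakly harmonic maps, smooth on an open dense connected set, together with the stability statement embedded in Proposition \ref{prop_1pinch} and the convergence machinery of Proposition \ref{prop_weaklim}.

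For $\cX^C\subseteq\cS^k(u)^C$, i.e. $\cS^k(u)\subseteq\cX$: suppose $x\notin\cX$, so some tangent map $T = \lim_j T^u_{x,r_{i_j}}$ (weak $H^1$, hence strong $L^2$ on compact sets by the $C^{0,\alpha}$ bound away from $\Sigma$ and dominated convergence) is $(k+1)$-symmetric. Fix any $\epsilon>0$. Since the rescalings $T^u_{x,r_{i_j}}$ converge to $T$ in $L^2(B_1(0))$ and $T$ is $(k+1)$-symmetric, for $j$ large we get $\fint_{B_1(0)}|T^u_{x,r_{i_j}}(y) - T(y)|^2 \le \epsilon$, which by the scaling identity $\fint_{B_r(x)}|u(y)-h(y-x)|^2 = \fint_{B_1(0)}|T^u_{x,r}(y)-h(y)|^2$ says $B_{r_{i_j}}(x)$ is $(k+1,\epsilon)$-symmetric for arbitrarily small $r_{i_j}$. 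Hence $x\notin\cS^k_{\epsilon,r}(u)$ for every $r$, so $x\notin\cS^k_\epsilon(u)$; as $\epsilon>0$ was arbitrary, $x\notin\cS^k(u)$.

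For the reverse inclusion $\cS^k(u)\subseteq\cX$, equivalently $\cX^C\subseteq\cS^k(u)^C$ run the other way — suppose $x\notin\cS^k(u)$, so for every $\epsilon>0$ there is a sequence of radii $r\to 0$ along which $B_r(x)$ is $(k+1,\epsilon)$-symmetric. Choose $\epsilon_j\to 0$ and corresponding radii $r_j\to 0$ with $k+1$-symmetric comparison maps $h_j$ so that $\fint_{B_1(0)}|T^u_{x,r_j}(y) - h_j(y)|^2 \le \epsilon_j$. Pass to a subsequence so that $T^u_{x,r_j}$ converges (weak $H^1$, strong $L^2_{loc}$) to a tangent map $T$; the $h_j$ are $0$-homogeneous with uniformly bounded energy (because $\|h_j - T^u_{x,r_j}\|_{L^2}\to 0$ and $T^u_{x,r_j}$ has uniform energy bounds via \eqref{eq_theta_unif}), so a further subsequence of $h_j$ converges in $L^2_{loc}$ to some $0$-homogeneous $h$; then $\|T - h\|_{L^2(B_1(0))} = 0$, so $T = h$ and $T$ is $0$-homogeneous. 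The point requiring care is upgrading $(k+1)$-\emph{approximate}-invariance of $h_j$ to exact $(k+1)$-invariance of $T$: each $h_j$ is invariant under translation by some $k{+}1$-plane $V_j$; extract a subsequence with $V_j\to V$ in the Grassmannian, and since translation is continuous on $L^2_{loc}$ and $h_j\to h=T$ in $L^2_{loc}$, we get $T(y+v)=T(y)$ for all $v\in V$, so $T$ is $(k+1)$-symmetric and $x\notin\cX$. This compactness-of-the-symmetry-planes step is the main (mild) obstacle; everything else is the standard rescaling/convergence bookkeeping already used in Proposition \ref{prop_1pinch}.
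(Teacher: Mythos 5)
Your proof is essentially the paper's argument, with one genuine gap. For the inclusion $\cX\subseteq\cS^k(u)$ (your ``reverse inclusion'' — note also you have a typo there, restating the first inclusion instead of the second), you open with: ``for every $\epsilon>0$ there is a sequence of radii $r\to 0$ along which $B_r(x)$ is $(k+1,\epsilon)$-symmetric.'' That does not follow from $x\notin\cS^k_\epsilon$. Unwinding the definitions, $x\notin\cS^k_\epsilon=\bigcap_r\cS^k_{\epsilon,r}$ only gives a \emph{single} scale $s_\epsilon\in(0,1)$ at which $B_{s_\epsilon}(x)$ is $(k+1,\epsilon)$-symmetric, and there is no a priori reason the $s_{\epsilon_j}$ can be chosen to tend to zero as $\epsilon_j\to 0$. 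Your subsequent argument (pass to a subsequence, identify the limit with a tangent map) relies on $r_j\to 0$, so without the missing case you have not shown the limit is a tangent map. The paper handles exactly this dichotomy: if a subsequence of $r_j$ tends to $0$, the limit $T$ is a tangent map; if the $r_j$ stay bounded away from $0$, then $T=T^u_{x,r_\infty}$ is exactly $0$-homogeneous and $(k+1)$-symmetric, hence $u$ is homogeneous on a ball of positive radius around $x$ and every tangent map at $x$ equals $T$. You need to add that second branch.

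On the positive side, you make explicit a step the paper states without justification: that a subsequence of the $(k+1)$-symmetric comparison maps $h_j$ converges to something $(k+1)$-symmetric. Your Grassmannian compactness argument (pass to $V_j\to V$, use $L^2_{\rm loc}$-continuity of translation and of $h_j\to h$) is the right way to close that step. Your first inclusion $\cS^k(u)\subseteq\cX$ is fine and is the same idea as the paper's, merely phrased as a contrapositive.
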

\begin{proof}
 Let $x\in \Omega$. We now know that for all $x$, there exists at least one (possibly more) tangent map $T$, and all tangent maps are $0$-symmetric.
 
 Suppose that at $x\in \cS^k_\epsilon$ for some $\epsilon>0$. Then since for all $r>0$, $T^u_{x,r}$ is at least $\epsilon$-apart in the $L^2$ sense from all $k+1$-symmetric maps, then also any tangent map $T=\lim_{i} T^u_{x,r_i}$ must be $\epsilon$-apart from all $k+1$ symmetric maps. Thus
 \begin{gather}
  \bigcup_{\epsilon>0}\cS^k_{\epsilon}\subseteq \cur{x\in \Omega \ \ s.t. \ \ \text{no tangent maps at } x \text{ is } k+1-\text{symmetric}}\, .
 \end{gather}
Now if $x\not \in \bigcup_{\epsilon>0} \cS^k_\epsilon$, by definition for all $i>0$, there exists some $r_i>0$ and some $k+1$-symmetric map $T_i$ such that
\begin{gather}
 \int_{\B 1 0} \abs{T^u_{x,r_i} - T_i}^2 \leq i^{-1}\, .
\end{gather}
By passing to a subsequence $T_i\to T$ which is $k+1$-symmetric, we see also that $T^u_{x,r_i} \wto T$. If there's a subsequence of $r_i$ converging to $0$, then $T$ is by definition a tangent map, and it is also $k+1$-symmetric. If $r_i$ is bounded away from $0$, then $u=T$ on a ball of positive size around $x$, and in particular all tangent maps of $u$ at $x$ are equal to $T$.  In either case, we have proved the claim.
\end{proof}

\subsection{Quantitative higher order symmetries}
In order to have higher order symmetries, one point where $\hat \theta$ is pinched is not enough. However, if we have more points where $\hat \theta$ is pinched, and these points span in some sense a $k$-dimensional space, this is enough to guarantee higher order symmetries.
\begin{example}
 As a guiding example, consider again the case of a stationary harmonic map. If for two distinct points $x_1,x_2$ we have $\theta(x_i,1)-\theta(x_i,1/2)=0$, then the map is homogeneous with respect to $x$ and homogeneous with respect to $y$, which immediately implies that this map is invariant with respect to the line $L$ joining $x$ and $y$.
 
 Moreover, note also the following. If we take any $z\not \in L$, then at $z$ two distinct directional derivatives of $u$ are null, one in the $L$ direction, and another in the direction joining $z$ and $x$ (or $y$). Now consider a small enough ball $\B r z$. On this ball, two directional derivatives are null, and if $r$ is small enough this ball is almost $0$-symmetric by monotonicity of $\theta(z,\cdot)$. Thus this ball will be almost $2$-symmetric.
\end{example}

 In the next lemmas, we will prove in detail quantitative versions of these observations.  Before doing that, let us record the definition of a quantitative version of linear independence, which will be used throughout the rest of this section.

\begin{definition}
Let $y_0,\cdots,y_k\subset \B 1 0\subset \R^m$. We say that these points $\rho$-effectively span a $k$-dimensional affine subspace if for all $i=1,\cdots,k$, 
\begin{gather}
 y_i\not \in \B {2\rho} {y_0+\operatorname{span}(y_1-y_0,\cdots,y_{i-1}-y_0)}\, .
\end{gather}
As an immediate consequence, we obtain that $\cur{y_i}$ are effectively linear independent, in the sense that for all
\begin{gather}
x\in \cur{y_0+\operatorname{span}(y_1-y_0,\cdots,y_{k}-y_0)}\, , 
\end{gather}
there exists a unique set of numbers $\cur{\alpha_i}_{i=1}^k$ such that
\begin{gather}
 x=y_0+\sum_{i=1}^k \alpha_i (y_i-y_0)\, , \quad \abs{\alpha_i}\leq C(m,\rho)\norm{x-y_0}\, .
\end{gather}
Moreover, note that this property is preserved under limits, as opposed to the property of being simply linearly independent. Indeed, if we have a sequence $\cur{y_{i,j}}_{i=0}^k$ such that for all $j$, $\cur{y_{i,j}}_{i=0}^k$ $\rho$-effectively span a $k$-dimensional space, and if $\lim_j y_{i,j}=\bar y_i$, then also $\cur{\bar y_i}_{i=0}^k$ $\rho$-effectively spans a $k$-dimensional space. 
\end{definition}

\begin{definition}
 Given $F\subset \B 1 0$, we say that $F$ $\rho$-effectively spans a $k$-dimensional subspace if there exist $\cur{y_0,\cdots,y_k}\subseteq F$ that $\rho$-effectively spans a $k$-dimensional subspace according to the previous definition.
\end{definition}

%
% ---------------------------------------------------------------------------
% 
% \textcolor{red}{!!! MAYBE WE CAN TAKE THIS OUT, SINCE THIS IS NOT ACTUALLY USED? !!!} 
With this concept in mind, we can extend Proposition \ref{prop_1pinch} to the case where we have $k+1$ distinct points of pinching for $\hat \theta$. Note that we will not actually use this proposition, indeed we will need the more refined version of this statement, which is the main focus of Section \ref{sec_dist_est}. However, we think it is reasonable to record this proposition in order to give the reader a better understanding of the direction that our argument is taking.
\begin{proposition}\label{prop_Kpinch}
 Let $u\in H^1(\B 3 0)$ be a solution to \eqref{eq_app1} and \eqref{eq_app_stat} with \eqref{eq_f} and $\theta(0,3)\leq \Lambda$. Then for all $\epsilon,\rho>0$, there exist $\delta(m,\Lambda,\gamma,\epsilon,\rho)$ such that if $\cF<\delta$ and for some $\cur{x_i}_{i=0}^k\subset \B 1 0$ we have
 \begin{enumerate}
  \item $\cur{x_i}$ $\rho$-effectively spans a $k$-dimensional affine subspace $V$,
  \item $\hat \theta(x_i,r)-\hat \theta(x_i,r/2)<\delta$ for all $i$,
 \end{enumerate}
then $\B r x$ is $(k,\epsilon)$-symmetric.
\end{proposition}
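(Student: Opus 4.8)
The plan is to argue by contradiction and compactness, mirroring the proof of Proposition \ref{prop_1pinch} but keeping track of the geometry of the pinched points. Suppose the statement fails: then for fixed $\epsilon,\rho>0$ there is a sequence of approximately harmonic maps $u_i$ with $\cF_i<i^{-1}$, balls $\B{r_i}{x_i}\subset\B 10$, and points $\{x_{i,0},\dots,x_{i,k}\}\subset\B 10$ which $\rho$-effectively span a $k$-dimensional affine subspace $V_i$, with $\hat\theta_{u_i}(x_{i,j},r_i)-\hat\theta_{u_i}(x_{i,j},r_i/2)<i^{-1}$ for all $j$, but such that $\B{r_i}{x_i}$ is not $(k,\epsilon)$-symmetric. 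First I would normalize by passing to the blow-up maps $T_i(y)=u_i(x_i+r_i y)$, noting that $\tilde f_i(y)=r_i^2 f_i(x_i+r_i y)$ satisfies $\int_{\B 10}|\tilde f_i|^2\le i^{-1}r_i^\gamma\to 0$, and that by \eqref{eq_theta_unif} the $T_i$ have uniform $H^1$ bounds. The rescaled pinch points $z_{i,j}=r_i^{-1}(x_{i,j}-x_i)$ lie in a fixed ball (since $x_{i,j},x_i\in\B 10$ and $r_i$ is bounded below away from zero — indeed if $r_i\to 0$ we can rescale differently, so assume $r_i$ bounded below, or handle that degenerate case separately) and continue to $\rho$-effectively span a $k$-dimensional affine subspace. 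Passing to a subsequence, $z_{i,j}\to z_j$, and by the limit-stability of effective spanning recorded in the definition, $\{z_0,\dots,z_k\}$ still $\rho$-effectively spans a $k$-dimensional affine subspace $V$.

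Next I would extract the limit map. By Proposition \ref{prop_weaklim}, a subsequence of $T_i$ converges weakly in $H^1_{loc}$ (and strongly in $H^1_{loc}$ away from a closed set $\Sigma$ of dimension $n-2$, and in $L^2_{loc}$) to a weakly harmonic map $T$ which is smooth on the complement of $\Sigma$. The key point is that the pinch-point information survives the limit: for each $j$, using \eqref{eq_hat'} applied at $z_{i,j}$ together with continuity of $\hat\theta$ and the fact that $\hat\theta_{T_i}(z_{i,j},\cdot)$ is controlled, we get
\begin{gather}
\int_{\B {R}{z_{i,j}}\setminus \B {1/2}{z_{i,j}}} \frac{|(y-z_{i,j})\cdot\nabla T_i|^2}{|y-z_{i,j}|^m}\to 0
\end{gather}
for every fixed $R$; passing to the limit (using strong $H^1_{loc}$ convergence away from $\Sigma$) shows $T$ is radially invariant about each $z_j$ on an annulus, hence by unique continuation (point \eqref{it_uc} of Proposition \ref{prop_weaklim}, comparing with the homogeneous extension) $T$ is homogeneous with respect to each $z_j$. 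A map that is $0$-symmetric about $k+1$ points which affinely span $V$ is invariant under translations along $V$: this is the elementary linear-algebra observation that homogeneity about $z_0$ and about $z_1$ forces invariance along the line $z_0 z_1$, and then an induction on the points (or a direct computation with the $\partial_{r_{z_j}}T=0$ equations) gives invariance along all of $\mathrm{span}(z_1-z_0,\dots,z_k-z_0)$. So $T$ is $k$-symmetric — homogeneous about $z_0$ (equivalently, after translating, about the origin) and $V$-invariant.

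Finally, I would derive the contradiction: since $T_i\to T$ strongly in $L^2(\B 10)$ (the weak $H^1$ convergence gives strong $L^2$ convergence on bounded sets by Rellich), and $T$ is $k$-symmetric, for $i$ large we have $\fint_{\B 10}|T_i(y)-T(y)|^2<\epsilon$, which says precisely that $\B{r_i}{x_i}$ is $(k,\epsilon)$-symmetric for $u_i$ — contradicting our assumption. The main obstacle I anticipate is the step extracting homogeneity about the limit pinch points $z_j$ rather than just about the center: one must be careful that the pinched points $z_{i,j}$ are not swallowed by the singular set $\Sigma$ where convergence is only weak, and that the annular radial-energy bound passes to the limit correctly. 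This is handled because the radial-energy integral is a weakly-lower-semicontinuous quantity on annuli avoiding $\Sigma$, and because $\Sigma$ has codimension $\ge 2$ so the limiting identity $\partial_{r_{z_j}}T=0$ holds in the distributional sense across $\Sigma$; then unique continuation for weakly harmonic maps smooth off an $(n-2)$-set (again point \eqref{it_uc}) upgrades it to genuine homogeneity. A secondary technical point, the possibility $r_i\to 0$, is harmless: if $r_i\to 0$ one instead uses that the $x_{i,j}$ themselves cluster, but effective spanning forces their mutual distances to be $\gtrsim\rho r_i$, and the same blow-up at scale $r_i$ applies verbatim.
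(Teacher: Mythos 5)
Your overall framework---rescale, extract a weak limit via Proposition~\ref{prop_weaklim}, use \eqref{eq_hat'} plus unique continuation to upgrade radial invariance on an annulus to genuine homogeneity, note that homogeneity about $k+1$ points affinely spanning $V$ forces $\hat V$-invariance, and close via strong $L^2$ convergence---is exactly the ``simple adaptation of Proposition~\ref{prop_1pinch}'' the paper has in mind. But two steps are genuinely broken.

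\textbf{The center of the ball.} You introduce a center $x_i$ distinct from the pinch points, rescale about it, obtain a limit $T$ homogeneous about $z_0$ and $\hat V$-invariant, and then assert that $T$ is $k$-symmetric, ``homogeneous about $z_0$ (equivalently, after translating, about the origin).'' This is not equivalent. By the paper's definition, $\B r x$ is $(k,\epsilon)$-symmetric for $g$ when $\fint_{\B r x}\abs{g(y)-h(y-x)}^2\leq\epsilon$ with $h$ $k$-symmetric, i.e.\ homogeneous \emph{at the origin} and invariant under a $k$-plane \emph{through the origin}. After rescaling, this requires the comparison map to be homogeneous at $0$; if $z_0\neq 0$ and $0\notin V$, the map you produce is not of that form on $\B 1 0$, and no translation of $h$ is permitted. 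The fix is to take the center to be one of the pinch points, say $x_{i,0}$; then $z_{i,0}\equiv 0$, the limit is homogeneous at the origin and $\hat V$-invariant with $0\in V$, and the contradiction goes through.

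\textbf{The $r_i\to 0$ degeneracy.} Your claimed resolution is factually incorrect: $\rho$-effective spanning of points in $\B 1 0$ forces mutual separations $\gtrsim\rho$, a \emph{fixed} scale, not $\gtrsim\rho r_i$ as you write. Consequently, if $r_i\to 0$ the rescaled pinch points $z_{i,j}=r_i^{-1}(x_{i,j}-x_{i,0})$ diverge and the rest of the argument cannot run. Worse, the statement read literally with arbitrarily small $r$ is suspect: the hypothesis $\hat\theta(x_j,r)-\hat\theta(x_j,r/2)<\delta$ at well-separated points only constrains $u$ on disjoint annuli of radius $\sim r\ll\rho$, which gives nothing close to $k$-invariance in $\B{r}{x_0}$. (Take $u$ with $k+1$ isolated $0$-homogeneous singular points at the $x_j$ and $r$ small: the hypotheses hold, but $\B r{x_0}$ sees only one singularity.) The proposition must be read with $r$ bounded below in terms of $\rho$ so that all pinch points lie at scale comparable to $r$; this is precisely the scale alignment built into the version actually used, Proposition~\ref{prop_k+1_pinch}, which pinches $\hat\theta(y,1)-\hat\theta(y,\rho)$ at unit scale. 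Your proof needs that restriction made explicit rather than dismissed as ``harmless.''

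A minor point: your annular radial-energy bound ``on $\B R{z_{i,j}}\setminus\B{1/2}{z_{i,j}}$ for every fixed $R$'' overstates what the hypothesis gives; the pinching only controls the annulus from $1/2$ to $1$ in the rescaled picture. This does no harm, since (as in Proposition~\ref{prop_1pinch}) radial invariance on a single annulus together with unique continuation already yields global homogeneity.
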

\begin{proof}
 The proof is a simple adaptation of Proposition \ref{prop_1pinch}.
\end{proof}

% ---------------------------------------------------------------------------

The next proposition shows that in case when the set of points with pinching effectively spans a $k$-dimensional plane $V$, then $\cS^k_{\epsilon,r}$ is contained in a tube around $V$.

\begin{proposition}\label{prop_k+1_pinch}
 Let $\rho,\epsilon>0$ be fixed. There exists $\delta_3(m,\Lambda,\KN,\gamma,\rho,\epsilon)>0$ such that the following holds.
 Let $F=\cur{y\in \B 2 0 \ \ s.t. \ \ \hat \theta(y,1)-\hat \theta(y,\rho)<\delta_3}$. If $\cF\leq \delta_3$ and $F$ $\rho$-effectively spans a $k$-dimensional subspace $V$, then 
 \begin{gather}
  \cS^k_{\epsilon,\delta_3}\subseteq \B {2 \rho} {V}\, .
 \end{gather}
\end{proposition}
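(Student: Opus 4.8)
The plan is to run a contradiction/compactness argument analogous to the proof of Proposition \ref{prop_1pinch}, but now working at a fixed scale rather than blowing up. Suppose the conclusion fails: then for each $i$ there is an approximately harmonic map $u_i$ satisfying \eqref{eq_app1}, \eqref{eq_app_stat}, \eqref{eq_f} with $\cF_i \le i^{-1}$ and $\hat\theta_{u_i}(0,3)\le \Lambda$, a set $F_i = \{y\in\B 2 0 : \hat\theta_{u_i}(y,1)-\hat\theta_{u_i}(y,\rho) < i^{-1}\}$ which $\rho$-effectively spans a $k$-dimensional affine subspace $V_i$, yet there exists a point $x_i \in \cS^k_{\epsilon, i^{-1}}(u_i)$ with $x_i \notin \B{2\rho}{V_i}$. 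By \eqref{eq_theta_unif} the $u_i$ have uniform $H^1$ bounds on $\B 2 0$, so after passing to a subsequence $u_i \wto u$ weakly in $H^1$; since $\tilde f_i \to 0$ in the appropriate sense, Proposition \ref{prop_weaklim} applies and $u$ is weakly harmonic and smooth away from a closed $(n-2)$-dimensional set $\Sigma$. We may also assume, since $\rho$-effective spanning passes to limits, that the $\rho$-effectively spanning tuples $\{y_{j,i}\}_{j=0}^k \subseteq F_i$ converge to a tuple $\{\bar y_j\}_{j=0}^k$ which $\rho$-effectively spans a limiting $k$-plane $V$, and that $x_i \to x_\infty$ with $x_\infty \notin \B{\rho}{V}$ (slightly shrinking the tube to keep strict separation in the limit), while the defect measure $\nu$ is supported on $\Sigma$.

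The first main step is to upgrade weak to strong $H^1$ convergence near each limiting pinch point $\bar y_j$, or rather near any point where the limit of $\hat\theta(\cdot,\cdot)$ is pinched. For this I would first establish that $\hat\theta_{u_i}(\cdot,\cdot)$ converges (along the subsequence) to $\hat\theta_u(\cdot,\cdot)$ outside $\Sigma$ — away from $\Sigma$ the convergence is strong in $H^1_{loc}$ by Proposition \ref{prop_weaklim}, and the lower-order $f$-terms in \eqref{eq_deph_hat} vanish in the limit by \eqref{eq_mono1} with $\cF_i\to 0$. Near $\Sigma$ one uses that $\hat\theta_{u_i}(y,1)-\hat\theta_{u_i}(y,\rho)$ controls $\int_{\B 1 y \setminus \B \rho y}\abs{(z-y)\cdot\nabla u_i}^2 \abs{z-y}^{-m}$ via \eqref{eq_hat'}, so the pinching passes to the limit in a weak-lower-semicontinuity sense: $\hat\theta_u(\bar y_j,1) - \hat\theta_u(\bar y_j,\rho) = 0$, hence by \eqref{eq_hat'} (with $f=0$) and the unique continuation property \eqref{it_uc} of Proposition \ref{prop_weaklim}, $u$ is homogeneous with respect to each $\bar y_j$. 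Being $0$-homogeneous with respect to all $k+1$ points $\bar y_0,\dots,\bar y_k$ that $\rho$-effectively span the $k$-plane $V$ forces $u$ to be translation-invariant along $V$ (this is the elementary linear-algebra observation of the guiding example: invariance under the dilation groups based at two distinct points generates the translation along the line through them, and iterating gives translation invariance along $\Span(V)$). So $u$ is a $k$-symmetric weakly harmonic map with invariant subspace (the linear part of) $V$.

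The second main step is to derive a contradiction at the point $x_\infty \notin \B\rho{V}$. Since $u$ is $k$-symmetric with invariant subspace $V$ and $x_\infty$ lies at definite distance from $V$, the singular set $\Sigma$ of $u$ is also $V$-invariant, so it is either all of a $V$-translate or misses a neighborhood of $x_\infty$; either way, by translating along $V$ we can find that $u$ restricted to a neighborhood of $x_\infty$ is smooth (the homogeneity degenerates away from $V$ only on a lower-dimensional set), hence $u$ is continuous at $x_\infty$. Therefore for a sufficiently small fixed radius $s_0 = s_0(\epsilon)$, the ball $\B{s_0}{x_\infty}$ is $(k+1,\epsilon/2)$-symmetric for $u$ — indeed $u$ near $x_\infty$ is $L^2$-close to the constant map $u(x_\infty)$, which is trivially $(m)$-symmetric and in particular $(k+1)$-symmetric. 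By the strong $L^2$ convergence $u_i \to u$ on $\B 2 0$ (which holds globally in $L^2$ since $u_i$ are uniformly bounded in $L^\infty$ by compactness of $N$, and $u_i\to u$ strongly in $H^1_{loc}$ off $\Sigma$ while $\nu$ carries no $L^2$ mass of $u_i - u$... more carefully: $u_i \to u$ in $C^{0,\alpha/2}_{loc}(\B 2 0 \setminus \Sigma)$ and uniformly bounded, so $u_i \to u$ in $L^2(\B 2 0)$ by dominated convergence since $\abs\Sigma = 0$), for $i$ large $\B{s_0}{x_i}$ is $(k+1,\epsilon)$-symmetric for $u_i$, contradicting $x_i \in \cS^k_{\epsilon,i^{-1}}(u_i)$ as soon as $i^{-1} < s_0$. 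The value of $\delta_3$ is then extracted: it must be small enough that $i^{-1} \le \delta_3$ forces the scales below $\delta_3$, and the contradiction gives the existence of a uniform such $\delta_3 = \delta_3(m,\Lambda,\KN,\gamma,\rho,\epsilon)$.

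\textbf{Expected main obstacle.} The delicate point is controlling the behavior of $\hat\theta$ and of the maps $u_i$ near the singular set $\Sigma$ — specifically, verifying that the pinching condition $\hat\theta_{u_i}(\bar y_j, \cdot)$ being small passes to the limit to give \emph{exact} homogeneity of $u$ at $\bar y_j$, even when $\bar y_j$ could a priori lie in $\Sigma$, and simultaneously controlling the defect measure so that the $L^2$ convergence used in the final step is genuinely global. Handling the possibility that several $\bar y_j$ lie in $\Sigma$, and ensuring the unique-continuation argument still closes (Proposition \ref{prop_weaklim}\eqref{it_uc} requires both competitors to be smooth off sets of finite $(n-2)$ packing content, which the homogeneous extension $u'$ satisfies since homogeneity cannot increase the singular dimension), is where the care is needed; everything else is a routine adaptation of Proposition \ref{prop_1pinch} and the guiding example.
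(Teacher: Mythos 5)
Your proposal runs a single top-level compactness/contradiction argument on the whole proposition, extracting a $k$-symmetric weak limit $u$ and then arguing at a fixed small radius around $x_\infty$. The paper's route is structurally different: it works directly with $u$ via a quantitative estimate, showing that on $\B r x$ (with $x\notin\B{2\rho}V$ and $r\le\rho$) the energy in a fixed $(k{+}1)$-plane $P=\hat V\oplus h(x)$ is small, and then invokes a separate compactness lemma (Lemma~\ref{lemma_k-span}) which contains the essential scale-selection step. The comparison is instructive precisely because your approach stalls at the point where the paper introduces that extra ingredient.

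\textbf{Where the gap is.} Step 1 of your argument is fine in spirit: lower semicontinuity of the radial energy integral under weak $H^1$ convergence, plus unique continuation, gives that $u$ is $0$-homogeneous with respect to each $\bar y_j$ and hence invariant along $\hat V$. The problem is Step 2. You claim that since $\Sigma$ is $V$-invariant, it ``is either all of a $V$-translate or misses a neighborhood of $x_\infty$,'' and conclude $u$ is continuous at $x_\infty$. This dichotomy is false. $V$-invariance of $\Sigma$ only means $\Sigma = V\times\Sigma'$, and $\Sigma'$ can be a nontrivial subset of $V^\perp$ of finite $(m-k-2)$-content with $x_\infty'\in\Sigma'$. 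Equivalently: the $k$-symmetric weak limit $u$ is determined by a weakly harmonic map $g$ on $S^{m-k-1}$, and for $m-k\ge 3$ such $g$ is generically singular, so $u$ need not be continuous at $x_\infty$. Nothing forces the accumulation point $x_\infty$ to avoid the singular set. Without continuity, $u|_{\B{s_0}{x_\infty}}$ is not $L^2$-close to a constant. Note also that near $x_\infty$ the map $u$ is (approximately) $P$-\emph{invariant} but not $0$-\emph{homogeneous} with respect to $x_\infty$ — homogeneity holds only with base point on $V$ — so the closeness to a $(k{+}1)$-\emph{symmetric} (homogeneous and invariant) competitor is not automatic, and that is exactly what the definition of $\cS^k_{\epsilon,\cdot}$ requires.

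\textbf{Why the paper's Lemma~\ref{lemma_k-span} is needed.} The paper converts ``small energy along the $(k{+}1)$-plane $P$ on $\B r x$'' into ``some ball around $x$ is $(k+1,\epsilon)$-symmetric'' by a pigeonhole over dyadic scales: since $\hat\theta_{u_i}(x,\cdot)$ is almost monotone with total variation $\le c\Lambda$, among the $\sim\log(1/\bar r)$ dyadic scales in $[\bar r, 1/2]$ there must be one where the pinching $\hat\theta_{u_i}(x,r_x)-\hat\theta_{u_i}(x,r_x/2)$ is as small as $c\Lambda/\log(1/\bar r)$; at that scale, both the $0$-homogeneity (from Proposition~\ref{prop_1pinch}) and the $P$-invariance survive a compactness argument, which yields $(k+1,\epsilon)$-symmetry. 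That pigeonhole must be done at the level of the approximate harmonic maps $u_i$, \emph{not} at the level of the weak limit $u$, because $u$ is only weakly harmonic and $\hat\theta_u$ is not monotone. Your compactness argument happens entirely on the limit and therefore has no access to this mechanism; this is the missing idea. Replacing your Step~2 by the estimate giving smallness of $r^{2-m}\int_{\B r x}\abs{P\cdot\nabla u}^2$ followed by an appeal to Lemma~\ref{lemma_k-span} (applied to the approximate harmonic maps themselves, not the limit) closes the gap and essentially reproduces the paper's proof.

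Two smaller remarks: (i) the $V$-invariance of $\Sigma$ (equivalently, of the defect measure) is not free; the paper establishes it in Lemma~\ref{lemma_unipinch} via the convergence $\theta_{u_i}\to\theta_\mu$ and the cited argument of Lin, so it needs to be quoted rather than asserted. (ii) Your reduction from $\B{2\rho}{V_i}$ to $\B\rho{V}$ for the limit is sensible, but with the corrected Step 2 it becomes unnecessary, since you would then be arguing pointwise on $u$ for each $x\notin\B{2\rho}V$ as in the paper.
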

We split the proof of this proposition into two parts, the first of which is contained in the following technical lemma.
\begin{lemma}\label{lemma_k-span}
 Let $u$ be as above. There exits a $\delta_4(m,\Lambda,\KN,\gamma,\rho,\epsilon)>0$ such that if $\cF<\delta_4$ and 
 \begin{gather}\label{eq_P_pinch}
  \int_{\B 1 0} \abs{P\cdot \nabla u}^2<\delta_4\, ,
 \end{gather}
 for some $k+1$ dimensional linear subspace $P$, then $\cS^k_{\epsilon,\bar r}\cap \B{1/2} {0}=\emptyset$, where $\bar r = \delta_4^{\frac 1 {2 (m-2)}}$.
\end{lemma}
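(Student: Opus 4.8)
The statement says: if $u$ has small "$(k+1)$-directional energy" in the sense $\int_{\B 1 0}\abs{P\cdot \nabla u}^2 < \delta_4$ for some $(k+1)$-plane $P$, and $\cF$ is small, then no point of $\B{1/2}0$ lies in $\cS^k_{\epsilon,\bar r}$ with $\bar r = \delta_4^{1/(2(m-2))}$. Let me think about how to prove this by contradiction and compactness, which is the natural approach given the framework of Proposition \ref{prop_1pinch}.

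**The contradiction/compactness setup.** Suppose not: there is a sequence $u_i$ with $\cF_i \to 0$, $(k+1)$-planes $P_i$ with $\int_{\B 1 0}\abs{P_i\cdot \nabla u_i}^2 \to 0$, and points $x_i \in \B{1/2}0 \cap \cS^k_{\epsilon, \bar r_i}$ where $\bar r_i = \delta_{4,i}^{1/(2(m-2))} \to 0$. After rotating, assume $P_i = P$ fixed (dimension $k+1$). By the uniform energy bound $\hat\theta(0,3)\le\Lambda$ and \eqref{eq_theta_unif}, pass to a weak $H^1$ subsequential limit $u_i \wto u$; by Proposition \ref{prop_weaklim}, $u$ is weakly harmonic, smooth off a set $\Sigma$ of finite $(n-2)$-packing, and the convergence is strong $H^1_{loc}\cap C^{0,\alpha/2}_{loc}$ away from $\Sigma$. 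The condition $\int \abs{P\cdot\nabla u_i}^2 \to 0$ passes to the limit (weak lower semicontinuity, or strong convergence off $\Sigma$ plus the packing bound killing the contribution near $\Sigma$), so $P\cdot \nabla u \equiv 0$ on $\B 1 0$: the limit $u$ is invariant under translations by $P$. Also $x_i \to x \in \overline{\B{1/2}0}$.

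**Deriving the contradiction.** The key point is that the quantitative stratification condition $x_i \in \cS^k_{\epsilon,\bar r_i}$ must fail in the limit. I would argue as follows. The limit $u$ is $(k+1)$-invariant; I want to produce, at the point $x$ and at some definite scale, a $(k+1)$-symmetric map that $\epsilon/2$-approximates $u$ on $\B s x$. Since $u$ is $(k+1)$-invariant and weakly harmonic, consider a tangent map $T$ of $u$ at $x$: by Corollary \ref{cor_weak_harm_limit} (applied to $u$ as a weakly harmonic limit — one needs that the monotonicity-type bound for $\hat\theta_u$ still holds for the limit, which follows since the error terms vanish when $f\equiv 0$) there is a $0$-symmetric tangent map $T$, and since $(k+1)$-invariance is preserved under blow-up, $T$ is both $0$-homogeneous and $(k+1)$-invariant, hence $(k+1)$-symmetric. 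Then for $s$ sufficiently small, $\fint_{\B s x}\abs{u(y) - T(y-x)}^2 < \epsilon/2$. Now use strong $L^2$ (indeed $C^{0}$ away from $\Sigma$, plus the packing bound on $\Sigma$ to handle small neighborhoods) convergence $u_i \to u$ on a fixed ball $\B{2s}{x}$: for $i$ large, $x_i \in \B{s/2}{x}$, $\bar r_i < s/2 < 1$, and $\fint_{\B{s}{x_i}}\abs{u_i(y) - T(y-x_i)}^2 < \epsilon$, which says $\B{s}{x_i}$ is $(k+1,\epsilon)$-symmetric for $u_i$ at a scale $s \in [\bar r_i, 1)$ — contradicting $x_i \in \cS^k_{\epsilon,\bar r_i}$. (The role of $\bar r$ being a power of $\delta_4$ is simply to make $\bar r_i \to 0$ at a rate tied to $\delta_{4,i}$; any rate with $\bar r_i\to 0$ works, and this explicit one is what later propositions need.)

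**The main obstacle.** The delicate step is making the convergence argument rigorous near the singular set $\Sigma$: we only have strong convergence of $u_i$ to $u$ away from $\Sigma$, whereas the $L^2$-average defining $(k+1,\epsilon)$-symmetry is over a full ball, which may meet $\Sigma$. This is handled exactly as in \cite{nava+} and as in the proof of Proposition \ref{prop_weaklim}: $\Sigma$ has finite $(n-2)$-packing content (so finite $\cH^{n-2}$ and zero $m$-measure), and $\abs{\nabla u_i}^2 d\Vol \wto \abs{\nabla u}^2 d\Vol + \nu$ with $\nu$ supported on $\Sigma$; one excises a thin neighborhood of $\Sigma$ whose measure is controlled, uses the uniform $L^\infty$ bound on $u_i$ (compactness of $N$) to bound the contribution of the $\abs{u_i - T}^2$ integrand there, and applies strong convergence on the complement. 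A secondary technical point is that one must know the limit map $u$ still satisfies the monotonicity identity for $\hat\theta_u$ with $f\equiv 0$ so that Corollary \ref{cor_weak_harm_limit} applies to it — but with $f=0$ this is precisely the classical monotonicity \eqref{eq_theta'} for (weakly, in fact stationarity is not needed here because homogeneity of the tangent follows from the vanishing radial-energy estimate which is itself a limit of the $\hat\theta$ estimates \eqref{eq_hat'} for the $u_i$) harmonic maps; alternatively one argues the homogeneity of the tangent directly from the limiting radial energy bound rather than quoting the corollary. I would present the radial-energy route to avoid circularity.
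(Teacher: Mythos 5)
Your proof takes a fundamentally different route from the paper's, and it has a genuine gap. After extracting the weak $H^1$ limit $u_i\wto u$, you want to take a tangent map $T$ of $u$ at $x=\lim x_i$ and argue it is $0$-symmetric, hence $(k+1)$-symmetric. This needs the normalized energy $\theta_u$ of the limit to be (almost) monotone. But $u$ is only \emph{weakly} harmonic (Proposition \ref{prop_weaklim}, point (3)), not stationary, and weakly harmonic maps in general satisfy no monotonicity formula — the remark after Proposition \ref{prop_weaklim} and the reference \cite{dinglili} are precisely about this failure. Corollary \ref{cor_weak_harm_limit} applies to maps satisfying both \eqref{eq_app1} and \eqref{eq_app_stat}, so it cannot be invoked for the limit. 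Your fallback (``the radial-energy route'') hits the same wall: passing the estimate \eqref{eq_hat'} for $u_i$ to the limit at fixed scales $s<r$ bounds the radial energy by the drop of $\theta_\mu$ with $\mu=\abs{\nabla u}^2\,d\Vol+\nu$, where $\nu$ is the defect measure; that drop need not be small near $x$ if $x\in\Sigma$, and nothing in your setup rules out that $\nu$ concentrates at $x$. If $x\notin\Sigma$ your argument is fine (the tangent map is a constant), but the contradiction sequence may accumulate on $\Sigma$.

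The paper avoids this entirely by never blowing up the limit. For each $x\in\B{1/2}0$ it first runs a pigeonhole argument on scales: the total variation of $\hat\theta(x,\cdot)$ on $[\bar r,1/2]$ is bounded by $c(m,\gamma)(\Lambda+\delta_4)$, and the number of dyadic scales there is of order $-\log\bar r = -\tfrac{1}{2(m-2)}\log\delta_4$, so there must exist $r_x\in[\bar r,1/2]$ with $\hat\theta(x,r_x)-\hat\theta(x,r_x/2)\lesssim \Lambda/(-\log\delta_4)\to 0$. Simultaneously $r_x^{2-m}\int_{\B{r_x}{x}}\abs{P\cdot\nabla u}^2\leq \delta_4\bar r^{2-m}=\delta_4^{1/2}\to 0$, and this is exactly where the exponent $\tfrac{1}{2(m-2)}$ is used — contrary to your remark, it is not ``any rate tending to $0$''. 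One then takes the weak limit of the \emph{rescaled} maps $T^{u_i}_{x_i,r_i}$ at these carefully chosen scales; the limit is $0$-symmetric by the small pinching plus Proposition \ref{prop_1pinch} (the pinching here is of $u_i$ itself, so no monotonicity for the limit is required), and $(k+1)$-invariant by the rescaled directional-energy bound, contradicting that $\B{r_i}{x_i}$ was not $(k+1,\epsilon)$-symmetric.
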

\begin{remark}
Notationally we define $|P\cdot \nabla u|^2\equiv \sum |\nabla_{e_i}u|^2$, where $e_i$ is an orthonormal basis of $P$.	
\end{remark}

\begin{proof}
 We want to show by contradiction that for all $x\in \B{1/2}{0}$, there exists an $r_x \in [\bar r,1/2]$ such that $\B {r_x}{x}$ is $(k+1,\epsilon)$-symmetric.  Note that, by monotonicity, for all $x\in \B {1/2}{0}$ there exists an $r_x\in [\bar r,1/2]$ such that
 \begin{gather}\label{eq_k+1_1}
  \hat \theta(x,r_x)-\hat \theta(x,r_x/2)<\frac{C_1(m,\gamma)\Lambda}{-\log(\delta_4)}\, .  
 \end{gather}
Indeed, otherwise we would have
 \begin{gather}
  c(m,\gamma)(\Lambda +\delta_4) \stackrel{\eqref{eq_theta_unif}}{\geq} \hat \theta(x,1/2)=\sum_{i=1}^{-\log(\bar r)+1} \ton{\hat \theta(x,2^{-i})-\hat \theta (x,2^{-i-1})} \stackrel{\eqref{eq_hatbounds}}{\geq} c(m) C_1(m,\gamma)\Lambda \, ,
 \end{gather}
 which is impossible if we set $C_1(m,\gamma)=2 c(m,\gamma)c(m)^{-1}$.
 Moreover, note that
 \begin{gather}\label{eq_k+1_2}
  r_x^{2-n} \int_{\B {r_x}{x}} \abs{P\cdot \nabla u}^2 \leq \delta_4 r_x^{2-n}\leq \delta_4^{1/2}\, 
 \end{gather}
by definition of $r_x$. Thus consider a contradicting sequence $u_i$ with $\delta_{4,i}\to 0$ such that there exists $x_i\in \B{1/2}{0}$ and $r_i\in [\bar r,1]$ such that $\B {r_i}{x_i}$ is not $(k+1,\epsilon)$-symmetric but such that \eqref{eq_k+1_1} and \eqref{eq_k+1_2} are valid. By a simple rotation, we can assume that the $k+1$ dimensional subspace $P$ is fixed throughout the sequence. Consider the maps $T_i= T^{u_i}_{x_i,r_i}$. Their weak limit converges to some weakly harmonic $T$ which is $0$-symmetric by unique continuation, \eqref{eq_k+1_1} and Proposition \ref{prop_1pinch}. Moreover, $T$ is also invariant wrt the $k+1$-dimensional $P$ by \eqref{eq_k+1_2}. This clearly is a contradiction.
 
\end{proof}

Now we turn to the proof of the main proposition.
\begin{proof}[Proof of Proposition \ref{prop_k+1_pinch}]
 Let $\cur{y_0,\cdots,y_k}\subseteq F$ $\rho$-effectively span the $k$-dimensional subspace $V$, and consider any $x\in \B 2 0 \setminus \B {2\rho}{V}$. Note that for all $i=0,\cdots,k$, we have
 \begin{gather}
  \B {\rho}{x}\subset \B {2} {y_i}\setminus \B {\rho} {y_i}\, .
 \end{gather}
By \eqref{eq_hat'}, we obtain that for all $i=0,\cdots,k$:
\begin{gather}
 \int_{\B \rho {x}} \abs{(z-y_i)\cdot \nabla u}^2(z)   \leq c(m)\qua{ \hat \theta(y_i,1)-\hat \theta(y_i,\rho)}\, .
\end{gather}
Let $w$ be any norm $1$-vector in $\hat V$, the linear subspace associated to the affine $V$. By definition of $\cur{y_i}$, there exists $\cur{\alpha_i}_{i=1}^k$ such that
\begin{gather}
 w=\sum_i \alpha_i (y_i-y_0) \ \ \quad \abs{\alpha_i}\leq c(m,\rho)\, .
\end{gather}
Thus we also have
\begin{gather}
 \int_{\B \rho {x}} \abs{w\cdot \nabla u}^2(z)   \leq C(m,\rho) \sum_i \int_{\B \rho {x}} \abs{[(y_i-z)+(z-y_0)]\cdot \nabla u}^2(z)   \leq\\
 \leq c(m,\rho)\sum_i \qua{\hat \theta(y_i,1)-\hat \theta(y_i,\rho)}\leq c(m,\rho,\gamma)\delta_3\, .
\end{gather}
This in particular implies
\begin{gather}
 \int_{\B \rho {x}} \abs{V\cdot \nabla u}^2   \leq c(m,\rho,\gamma)\delta_3\, .
\end{gather}
In order to gain one more independent direction along which the energy is small, set for all $z\in \B 1 0\setminus V$
\begin{gather}
 h(z)= \frac{z-\pi_V(z)}{\norm{z-\pi_V(z)}}\, .
\end{gather}
By an argument similar to the one above, we obtain also
\begin{gather}
 \int_{\B \rho {x}} \abs{h(z)\cdot \nabla u}^2(z)  \leq c(m,\rho,\gamma)\delta_3 \, .
\end{gather}
By a geometric argument, it is easy to see that if $d(z,V)\geq \rho$, then $\abs{h(z)-h(x)}\leq C(m) \abs{x-z} \rho^{-1}$. This implies that for all $r\leq \rho$, we have
\begin{gather}
 \frac 1 2 \int_{\B r {x}} \abs{h(x)\cdot \nabla u}^2(z)   \leq \int_{\B r {x}} \abs{h(z)\cdot \nabla u}^2(z)+\int_{\B r {x}} \abs{h(x)-h(z)}^2\abs{\nabla u}^2(z)  \leq c(m,\rho,\gamma)\delta_3 + c(m,\rho,\gamma,\Lambda) r^m\, ,
\end{gather}
where in the last line we have used the uniform bound $\theta(x,r)\leq c(m,\gamma)(\Lambda+1)$ for all $x\in \B 2 0$ and $r\in [0,1]$ (see the estimates in \eqref{eq_theta_unif}).

For the $k+1$-dimensional linear subspace $P=\hat L \oplus h(x)$, and for all $r\leq \rho$, we get the estimate
\begin{gather}
  r^{2-m}\int_{\B {r} {x}} \abs{P\cdot \nabla u}^2  \leq c(m,\rho,\gamma)\delta_3 r^{2-m} + C(m,\rho,\gamma,\Lambda) r^2\, .
\end{gather}
Now we can choose $r$ and subsequently $0<\delta_3<<\delta_4$ small enough in order to apply Lemma \ref{lemma_k-span} to $\B r x$, and we obtain the thesis.
\end{proof}

We close this section by observing that if $\B r x $ is not $(k+1,\epsilon)$-symmetric, but it is almost $0$-symmetric, then necessarily $u$ must have some energy on \textit{any} $k+1$ distinct directions on $\B r x$. Actually, part of this energy must be concentrated on the annulus $\B r x\setminus \B {r/2}{x}$.
\begin{lemma}\label{lemma_lower_bound}
Let $u$ be as above. Then for each $\epsilon>0$ there exists $\delta_5(m,\KN,\Lambda,\gamma,\epsilon)>0$ such that if $\cF<\delta_5$, and $\B 1 0$ is $(0,\delta_5)$-symmetric but is {\it not} $(k+1,\epsilon)$-symmetric, then for every $k+1$ linear subspace $P$ we have 
\begin{align}
\int_{A_{3/8,1/2}(0)} |P\cdot \nabla u|^2 \geq \delta_5\, ,
\end{align}
where $A_{3/8,1/2}(0)\equiv \B {1/2} 0 \setminus \B {3/8}{0}$.
\end{lemma}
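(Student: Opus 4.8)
The plan is a standard contradiction-and-compactness argument, using the structural results from the previous sections. Suppose the statement fails: then there is a fixed $\epsilon>0$ and a sequence of approximately harmonic maps $u_i$ on $\B 3 0$ with $\hat\theta_{u_i}(0,3)\leq\Lambda$, with $\cF_i\to 0$, such that $\B 1 0$ is $(0,i^{-1})$-symmetric for $u_i$ but not $(k+1,\epsilon)$-symmetric, and yet there exist $(k+1)$-dimensional linear subspaces $P_i$ with $\int_{A_{3/8,1/2}(0)}|P_i\cdot\nabla u_i|^2 < i^{-1}$. By \eqref{eq_theta_unif} the $u_i$ have uniform $H^1$ bounds, so after passing to a subsequence $u_i\wto u$ weakly in $H^1(\B 3 0)$, with $f_i\wto 0$; rotating if necessary we may also assume $P_i\equiv P$ is a fixed $(k+1)$-plane. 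By Proposition \ref{prop_weaklim}, $u$ is weakly harmonic, smooth away from a closed set $\Sigma$ of dimension $n-2$, and the $u_i$ converge strongly in $H^1_{loc}\cap C^{0,\alpha/2}_{loc}$ away from $\Sigma$; moreover by the defect-measure remark, $|\nabla u_i|^2\,d\Vol\wto |\nabla u|^2\,d\Vol+\nu$ with $\nu$ supported on $\Sigma$.

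First I would extract the symmetries of the limit $u$. Since $\B 1 0$ is $(0,i^{-1})$-symmetric for $u_i$, there are $0$-symmetric $h_i$ with $\fint_{\B 1 0}|u_i-h_i|^2\to 0$; the $u_i$ are bounded in $L^2$, so the $h_i$ are too, and a subsequence converges weakly (indeed strongly in $L^2(\B 1 0)$, by homogeneity and a $1$D argument on $S^{m-1}$) to a $0$-symmetric $L^2$ map $h$, forcing $u=h$ a.e.\ on $\B 1 0$: the limit $u$ is $0$-symmetric. Next, the energy bound $\int_{A_{3/8,1/2}(0)}|P\cdot\nabla u_i|^2<i^{-1}$ passes to the limit: by weak lower semicontinuity of the convex functional $v\mapsto\int_{A_{3/8,1/2}(0)}|P\cdot\nabla v|^2$ applied to $\nabla u_i\wto\nabla u$, we get $\int_{A_{3/8,1/2}(0)}|P\cdot\nabla u|^2=0$, so all $k+1$ coordinate derivatives of $u$ along $P$ vanish on the open annulus $A_{3/8,1/2}(0)$ — away from $\Sigma$ this is classical, and across the $(n-2)$-dimensional $\Sigma$ it extends since $\Sigma$ is non-disconnecting and the derivatives vanish a.e. Then I would apply the unique continuation property (point \eqref{it_uc} of Proposition \ref{prop_weaklim}): the maps $u$ and $u(\cdot+v)$ for $v\in P$ are both weakly harmonic, smooth off an $(n-2)$-set, and agree (after the homogeneity normalization) on the annulus, hence everywhere; thus $u$ is $P$-invariant on all of $\R^m$. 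Combined with $0$-symmetry, $u$ is $(k+1)$-symmetric.

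The contradiction is then reached by the standard strong-$L^2$-convergence step: since $u$ is $(k+1)$-symmetric and $u_i\to u$ strongly in $L^2(\B 1 0)$ (the strong convergence away from $\Sigma$ together with the uniform $L^\infty$ bound from compactness of $N$ and the smallness of $\Sigma$ gives $\int_{\B 1 0}|u_i-u|^2\to 0$), we have $\fint_{\B 1 0}|u_i(y)-u(y)|^2\leq\epsilon/2$ for $i$ large, so $\B 1 0$ is $(k+1,\epsilon)$-symmetric for $u_i$, contradicting the assumption. I expect the main technical point to be the careful handling of the annular region: one must upgrade the weak limit information across the singular set $\Sigma$ (of the limit map) and make sure the derivative-vanishing statement genuinely holds on the full annulus before invoking unique continuation — this is exactly where the packing/capacity smallness of $\Sigma$ and point \eqref{it_uc} of Proposition \ref{prop_weaklim} do the work, just as in the proof of Proposition \ref{prop_1pinch}. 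Note the restriction to the \emph{inner} annulus $A_{3/8,1/2}(0)\subset\B 1 0$ is a mild convenience: it lets us run the whole argument inside the ball where $(0,\delta_5)$-symmetry is assumed and where strong $L^2$-convergence is available, with no boundary issues.
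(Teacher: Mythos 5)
Your proposal is correct and takes essentially the same compactness-contradiction route as the paper. The one small difference: to extend $P\cdot\nabla u=0$ from the annulus $A_{3/8,1/2}$ to all of $\B 1 0$, you invoke unique continuation on $u$ versus its $P$-translates, whereas the paper simply uses that the limit is $0$-homogeneous (so radial dilations of the annulus cover all of $\R^m\setminus\{0\}$) — a slightly more direct step that avoids the extra appeal to Proposition \ref{prop_weaklim}\eqref{it_uc}; also, your argument that $u$ is $0$-symmetric can be streamlined, since strong $L^2$ convergence $h_i\to u$ follows immediately from Rellich on $u_i$ together with $\fint_{\B 1 0}|u_i-h_i|^2\to 0$, with no need for the separate $1$D-on-$S^{m-1}$ discussion.
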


\begin{proof}

Also this proof is based on a simple contradiction argument which hinges on the $H^1_{weak}$ compactness given by the uniform energy bounds.  Thus let $u_i$ be a contradicting sequence. In particular, let $u_i$ be approximately harmonic maps with $\cF_i\leq i^{-1}$ such that $\B 1 0$ is $(0,i^{-1})$-symmetric but not $(k+1,\epsilon)$-symmetric. Moreover, let $P$ be a $k+1$-dimensional subspace such that for all $i$
\begin{align}\label{e:best_subspace_lower_energy:1}
\fint_{A_{3/8,1/2}(0)}|P\cdot \nabla u_i|^2 \leq i^{-1}\to 0\, .
\end{align}
Note that we can assume that $P$ is not changing with $i$ simply by making a rotation in the domain space.
After passing to a subsequence, $u_i\wto u$ in the weak $H^1$ sense, with $u$ being a weakly harmonic map. Now \eqref{e:best_subspace_lower_energy:1} and the $H^1_{weak}$ convergence guarantee that 
\begin{gather}
 \int_{A_{3/8,1/2}(0)}\abs{P\cdot \nabla u}^2 = 0\, .
\end{gather}
Moreover, $u$ will be $0$-homogeneous by unique continuation and the pinching assumption. Thus, we obtain that 
\begin{gather}
 \int_{\B 1 0 }\abs{P\cdot \nabla u}^2 = 0\, ,
\end{gather}
and this implies that $u$ is $k+1$-symmetric.  Since $u_i$ converges strongly to $u$ in $L^2(\B 1 0)$, we obtain a contradiction. 
\end{proof}

\subsection{Uniformity of the energy and of the non-symmetry}
One moral to be taken from the previous section, in particular from Proposition \ref{prop_Kpinch}, is that if we have a lot of pinched points which span something $k$-dimensional, then the map $u$ is almost constant along these $k$-directions.

Here we prove another two important variations of this general philosophical point. First of all, we will show that in the situation described above, $u$ is almost constant also in some $H^1$ sense, not just in an $L^2$ sense. In particular $\hat \theta$ remains almost constant on all pinched points.

\begin{lemma}\label{lemma_unipinch}
  Let $u:\B 3 0 \to N$ be a solution to \eqref{eq_app1} and \eqref{eq_app_stat} with \eqref{eq_f}. Let $\rho>0$ and $\eta>0$ be fixed, and assume that for all $y\in \B 1 0$, $\hat \theta(y,1)\leq E$, then there exists $\delta_6=\delta_6(m,\Lambda,\KN,\rho,\gamma,\eta)$ such that if $\cF\leq \delta_6$, the following holds.
 Let $F(u,\delta_6)\subseteq\cur{y\in \B 1 0 \ \ s.t. \ \ \hat \theta(y,\rho)>E-\delta_6}$. If $F$ $\rho$-effectively spans a $k$-dimensional subspace $L$, then for all $x\in L\cap \B 2 0$, we have
 \begin{gather}
  \hat \theta(x, \rho)>E-\eta\, .
 \end{gather}
 Moreover, if $k\geq m-1$, then $E\leq \eta$.
\end{lemma}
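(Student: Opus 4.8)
The natural approach is the standard contradiction/compactness argument that pervades this section, run on the blow-up maps at scale $\rho$. Suppose the statement fails: there is a sequence $u_i$ with $\cF_i \to 0$, a uniform bound $\hat\theta_{u_i}(y,1)\le E_i$ (which we may also assume $\to E$ after passing to a subsequence, using \eqref{eq_theta_unif} to bound $E_i\le c(m)\Lambda + c(m,\gamma)$), points $\{y_{0,i},\dots,y_{k,i}\}\subseteq F(u_i,i^{-1})$ that $\rho$-effectively span a $k$-dimensional $L_i$, and points $x_i\in L_i\cap \B 2 0$ with $\hat\theta_{u_i}(x_i,\rho)\le E_i-\eta$. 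After a rotation and translation (and passing to a subsequence) we may fix $L_i\equiv L$, fix the affine span, and assume $y_{j,i}\to \bar y_j$, $x_i\to \bar x\in L$; crucially, the $\rho$-effective spanning property passes to the limit, as recorded in the definition. Consider the blow-ups $T_i(z) = u_i(\bar y_{0,i} + z)$ (no rescaling is needed here since we work at the fixed scale near $1$); by \eqref{eq_theta_unif} these have uniform $H^1$ bounds, so after a subsequence $T_i\wto T$ weakly in $H^1_{loc}$, and by Proposition \ref{prop_weaklim} (since $\tilde f_i$ has $L^2$ norm $\to 0$ by \eqref{eq_f}) $T$ is weakly harmonic and smooth off a closed set $\Sigma$ of dimension $n-2$.

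The heart of the argument is showing $T$ is homogeneous with respect to \emph{every} point of the affine span $L$. For each fixed $j$, the hypothesis $y_{j,i}\in F(u_i,i^{-1})$ gives $\hat\theta_{u_i}(y_{j,i},\rho) > E_i - i^{-1}$, while $\hat\theta_{u_i}(y_{j,i},1)\le E_i$; hence $\hat\theta_{u_i}(y_{j,i},1)-\hat\theta_{u_i}(y_{j,i},\rho)\to 0$, and by \eqref{eq_hat'} the radial energy of $u_i$ about $y_{j,i}$ on the annulus $\B 1 {y_{j,i}}\setminus \B \rho {y_{j,i}}$ tends to $0$. Passing to the limit and using the unique continuation property (point \eqref{it_uc} of Proposition \ref{prop_weaklim}, comparing $T$ with its homogeneous extension about $\bar y_j$, exactly as in Proposition \ref{prop_1pinch}), $T$ is $0$-symmetric about each $\bar y_j$. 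A $0$-symmetric-about-two-distinct-points map is translation invariant along the line through them; iterating over the $k+1$ effectively spanning points (this is precisely the guiding example preceding the lemma, made rigorous), $T$ is invariant under translations by $\hat L$, the direction space of $L$, and is $0$-symmetric about $\bar y_0$, hence about every point of $L$. In particular $\hat\theta_T(\cdot,\rho)$ is constant on $L$; but lower semicontinuity of $\hat\theta$ under weak $H^1$ convergence (the $L^2$ terms in \eqref{eq_deph_hat} converge by $\cF_i\to 0$ and the strong $L^2$ convergence, the $\abs{\nabla u}^2$ term is lower semicontinuous) gives $\hat\theta_T(\bar y_j,\rho)\ge \limsup_i \hat\theta_{u_i}(y_{j,i},\rho)\ge E$ while $\hat\theta_T(\bar x,\rho)\le \liminf_i \hat\theta_{u_i}(x_i,\rho)\le E-\eta$, contradicting constancy on $L$.

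For the final clause, if $k\ge m-1$ then $L$ is at least an $(m-1)$-plane and $T$ is invariant under an $(m-1)$-dimensional translation family while also $0$-homogeneous about a point of $L$; such a map depends only on a single coordinate and is $0$-homogeneous in it, hence is constant, so $E = \hat\theta_T(\cdot,\rho) = 0$, and for finite $i$ the same compactness forces $E_i \le \eta$. The main obstacle I anticipate is the bookkeeping in the limit: ensuring the $\rho$-effective spanning, the convergence of base points, and the compatibility of the various translations of $T$ all survive simultaneously, and — more delicately — that the inequality $\hat\theta_T(\bar y_j,\rho)\ge E$ really does hold, which requires the semicontinuity of $\hat\theta$ rather than just of $\theta$; here one leans on the strong $L^2$ (indeed strong $H^1_{loc}$ off $\Sigma$) convergence from Lemma \ref{lemma_conv} to handle the cross term $\ps{(y-x)\cdot\nabla u}{f}$ and the $\abs f^2$ term, both of which vanish in the limit since $f_i\to 0$, leaving only $\theta_T$, which is itself lower semicontinuous and in fact a limit on the region of strong convergence.
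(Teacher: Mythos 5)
Your setup (contradicting sequence, passing to a weak $H^1$ limit $T$, using the annulus pinching and unique continuation to show $T$ is $0$-symmetric about each $\bar y_j$, hence invariant along $\hat L$) tracks the paper up to a point, but the crucial final step is wrong, and wrong in a way that cannot be patched without a genuinely new ingredient.

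You write that ``lower semicontinuity of $\hat\theta$ under weak $H^1$ convergence \dots gives $\hat\theta_T(\bar y_j,\rho)\ge \limsup_i \hat\theta_{u_i}(y_{j,i},\rho)\ge E$.'' This is the inequality reversed. Weak lower semicontinuity of the Dirichlet energy says that the \emph{limit's} energy is $\le$ the $\liminf$ of the sequence's energies, i.e.\ $\hat\theta_T(\bar y_j,\rho) \le E$, not $\ge E$. And this is not a mere typo one can fix: energy really can be lost in the weak limit, precisely because of concentration on the set $\Sigma$ from Proposition \ref{prop_weaklim}. That is the whole point of the defect measure $\nu$ in the remark after that proposition. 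As written, your argument establishes that the limit $T$ is invariant along $L$ and that $\hat\theta_T(\bar x,\rho)\le E-\eta$ (this part is fine), but it cannot conclude $\hat\theta_T(\bar y_j,\rho)\ge E$ to produce the contradiction, because the pinching information about $u_i$ at $y_{j,i}$ lives partly on $\nu$, not on $T$.

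The paper's proof deals with exactly this issue. It does not run the argument at the level of the limit map $T$; instead it passes to the limit measure $\mu = |\nabla u|^2\, d\Vol + \nu$, where $\nu$ is the defect measure, defines $\theta_\mu(y,r) = r^{2-n}\mu(B_r(y))$, and observes that for a.e.\ $r$ one has $\theta_\mu(y,r) = \lim_i \theta_{u_i}(y_i,r)$ for $y_i\to y$ (this is \eqref{eq_y_i2y} plus weak-* convergence of measures). This genuine limit identity (not a one-sided semicontinuity) is what lets one say $\theta_\mu(\bar y_j,1)-\theta_\mu(\bar y_j,\rho)=0$ and $\theta_\mu(\bar x,\rho)\le E-\eta$ simultaneously. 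It then invokes \cite[lemma 1.7]{lin_stat} to conclude that $\mu$ itself (not just $u$) is radially invariant about each $\bar y_j$ on the annulus, hence translation-invariant along $L$, giving $\theta_\mu(\cdot,\rho)\equiv E$ on $L$ and the contradiction. Your argument shows only that $u$ is invariant, which is strictly weaker. Likewise in the $k\ge m-1$ case: you state $E=\hat\theta_T(\cdot,\rho)=0$, but without first showing $\nu=0$ (which the paper gets from the fact that an $(m-1)$-invariant measure supported on an $(m-2)$-dimensional set must vanish) you cannot pass from ``$T$ is constant'' to ``$E=0$''; the missing energy could again be sitting in $\nu$. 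So you need to promote the whole argument from the level of the map $T$ to the level of the limit measure $\mu$ and its defect part $\nu$ before either conclusion goes through.
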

\begin{proof}
 We prove this statement by contradiction. Fix any $\eta>0$, and let $u_i$ be a contradicting sequence. 
 In particular, $u_i$ will be approximately harmonic maps on $\B 3 0$ with $\theta_{u_i}(y,1)\leq E$ for all $i$, and such that $F_i=F(u_i,i^{-1})$ $\rho$-effectively spans a $k$-dimensional subspace $L$. Up to translations and rotations, we can assume that $L$ is fixed for all $i$. 
 
 Moreover, we assume by contradiction that there exists some $x_i\in L$ such that $\hat \theta_{u_i}(x_i,\rho)\leq E-\epsilon$. Note that, evidently, we can assume that $L$ has dimension at least $1$, otherwise there's nothing to prove.
 
 By weak compactness, we can pass to a subsequence and have $u_i\wto u$, where $u$ is a weakly harmonic map, and
 \begin{gather}
  \abs{\nabla u_i}^2 d\Vol \wto \mu \equiv \abs{\nabla u}^2 d\Vol + \nu \, ,
 \end{gather}
where the convergence is in the weak sense of measures, and $\nu$ is the defect measure, which is nonnegative by Fatou's lemma. Set by definition $\theta_\mu(y,r)=r^{2-n}\mu(\B r y)$, and note that $\theta_\mu$ is monotone in $r$ for all $y$ fixed. Indeed, let $0<r_1<r_2$, and consider that
\begin{gather}
 \theta_{\mu}(y,r_2)-\theta_\mu (y,r_1)=\lim_{i\to \infty} \qua{\theta_{u_i}(y,r_2)-\theta_{u_i}(y,r_1)} \stackrel{\eqref{eq_hatbounds}}{=}\lim_{i\to \infty} \qua{\hat \theta_{u_i}(y,r)-\hat \theta_{u_i}(y,r_1)} {\geq} 0
\end{gather}
Now let $\cur{y_{i,j}}_{j=0}^k\subset F_i$ be a set of points which $\rho$-effectively spans $L$. By passing to a subsequence, we can assume that $x_i\to x\in L$ and $\lim_{i\to \infty} y_{i,j}=\bar y_j$, where $\cur{\bar y_j}$ $\rho$-effectively spans $L$.

For all $j$ and $r>0$, we have that
\begin{gather}\label{eq_y_i2y}
 \ton{\frac{r-\abs{y_i-y}}{r}}^{n-2}\theta_{u_i}(y_i,r-\abs{y_i-y})\leq \theta_{u_i} (y,r)\leq \ton{\frac{r}{r+\abs{y_i-y}}}^{n-2}\theta_{u_i}(y_i,r+\abs{y_i-y})\, ,
\end{gather}
thus by taking the limit on $i$, we can conclude that for all $j$:
\begin{gather}
 \theta_\mu(y_j,1)-\theta_\mu(y_j,\rho)=0\, .
\end{gather}

% \textcolor{red}{!!! IS THE FOLLOWING QUOTATION OK? !!!} 
By an easy adaptation of \cite[lemma 1.7]{lin_stat} (in particular, by the proof of point $ii$ in this lemma, carried out at pages 797--800), this implies that $\mu$, $\nu$ and $u$ are radially invariant on $\B{1}{y_j}\setminus \B {\rho}{y_j}$ for all $j$. Since $y_j$ $\rho$-effectively span $L$, as an immediate consequence we have that $u$, $\nu$ and $\mu$ are invariant wrt $L$ on the whole $\B 2 0$.

Thus $\theta_\mu(y,\rho)=E$ for all $y\in L$, in particular $\theta_\mu(x,\rho)=E$. Since $x_i\to x$ and $\rho>0$, we obtain our contradiction by \eqref{eq_y_i2y}.

\vspace{3mm}

As a last point, if we assume that $L$ has dimension at least $m-1$, then we know that both $\nu$ and $u$ are invariant wrt an $m-1$ dimensional affine subspace. However, by point $2$ in Proposition \ref{prop_weaklim}, the support of $\nu$ must have finite $m-2$ dimensional Hausdorff measure. Thus, we see that $\nu=0$, which means that $u_i$ converges strongly in $H^1$ to $u$, which is an $m-1$-invariant weakly harmonic map. In other words, $u$ is a weakly harmonic map depending only on $1$ variable, and by standard Sobolev embeddings those maps are well-known to be continuous. Since $u$ is also $0$-symmetric wrt $y_0$, it is a constant map. This proves the last claim.

\end{proof}

We close this section with the last technical result we need. In particular, we prove that almost symmetry (or lack thereof) is preserved under some suitable pinching condition.

\begin{lemma}\label{lemma_ksym_pinch}
  Let $u:\B 8 0 \to N$ be a solution to \eqref{eq_app1} and \eqref{eq_app_stat} with \eqref{eq_f}. Let $\rho>0$ and $\epsilon>0$ be fixed. There exists $\delta_7=\delta_7(m,\Lambda,\KN,\rho,\gamma,\epsilon)$ such that if $\cF\leq \delta_7$, then the following holds.  If $ \hat \theta(0,1)-\hat \theta(0,1/2)<\delta_7$ and there exists a point $y\in \B 3 0$ with
  \begin{enumerate}
   \item $ \hat \theta(y,1)-\hat \theta(y,1/2)<\delta_7$,
   \item for some $r\in [\rho, 2]$, $\B r y$ is not $(k+1,\epsilon)$-symmetric
  \end{enumerate}
Then $\B r 0$ is not $(k+1,\epsilon/2)$-symmetric.
\end{lemma}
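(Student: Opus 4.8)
The plan is to argue by contradiction via the usual $H^1_{\mathrm{weak}}$-compactness machinery that has been used repeatedly in this section. Suppose the statement fails; then there is a sequence of approximately harmonic maps $u_i$ on $\B 8 0$ with $\cF_i \to 0$, a sequence of radii $r_i \in [\rho,2]$, and points $y_i \in \B 3 0$ such that $\hat\theta_{u_i}(0,1)-\hat\theta_{u_i}(0,1/2)<i^{-1}$, $\hat\theta_{u_i}(y_i,1)-\hat\theta_{u_i}(y_i,1/2)<i^{-1}$, $\B {r_i}{y_i}$ is not $(k+1,\epsilon)$-symmetric, but $\B {r_i}{0}$ \emph{is} $(k+1,\epsilon/2)$-symmetric. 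By the uniform energy bound \eqref{eq_theta_unif}, after passing to a subsequence we may assume $r_i\to r\in[\rho,2]$, $y_i\to y\in\overline{\B 3 0}$, and $u_i\wto u$ in weak $H^1$, where by Proposition \ref{prop_weaklim} the limit $u$ is weakly harmonic and smooth away from a closed set $\Sigma$ of finite $n-2$ packing content.

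First I would extract symmetry of the limit at both centers. Since $\tilde f_i\to 0$ in the relevant sense (this is exactly the computation $\int_{\B 1 0}|\tilde f_i|^2 = r^{4-m}\int_{\B r x}|f|^2\le \cF r^\gamma$ from the scaling discussion), the two pinching hypotheses together with \eqref{eq_hat'} give $\int_{\B 1 0\setminus \B{1/2}0}|z\cdot\nabla u_i|^2\to 0$ and the analogous statement centered at $y_i$; passing to the limit and invoking the unique continuation argument of Proposition \ref{prop_1pinch} (point \eqref{it_uc} of Proposition \ref{prop_weaklim}) shows $u$ is $0$-symmetric with respect to the origin \emph{and} $0$-symmetric with respect to $y$. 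Next, because $\B{r_i}0$ is $(k+1,\epsilon/2)$-symmetric, there are $(k+1)$-symmetric maps $h_i$ with $\fint_{\B{r_i}0}|u_i-h_i(\cdot)|^2\le \epsilon/2$; since $u_i\to u$ strongly in $L^2$ on fixed balls (the $L^2$ strong convergence is automatic from weak $H^1$ compactness and Rellich), and the space of $(k+1)$-symmetric maps with uniformly bounded energy is closed under weak $H^1$ limits (hence $L^2$ limits on $\B r 0$ — here one uses that homogeneity and $V$-invariance pass to limits, with $V_i\to V$ after rotating), we conclude $\fint_{\B r 0}|u-h|^2\le \epsilon/2$ for some $(k+1)$-symmetric $h$, i.e.\ $\B r 0$ is $(k+1,\epsilon/2)$-symmetric for the limit.

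The decisive step is to upgrade this to $(k+1,\epsilon)$-symmetry of $\B r y$ for the limit $u$, which then contradicts the fact that $\B{r_i}{y_i}$ is \emph{not} $(k+1,\epsilon)$-symmetric (again via $L^2$ strong convergence: if $u$ were $(k+1,\epsilon/2)$-close on $\B r y$ to a symmetric map then $u_i$ would be $(k+1,\epsilon)$-close for $i$ large). Here is where $0$-symmetry with respect to \emph{both} $0$ and $y$ does the work: a map that is homogeneous about two distinct points is automatically translation-invariant along the line through them, so in particular the $(k+1)$-invariant subspace $V$ for $u$ on $\B r 0$ and the radial structure about $y$ are compatible, and one checks directly that the same symmetric model $h$ (recentered, using homogeneity about the origin to move the center along rays, and translation invariance along the $0y$-line) approximates $u$ on $\B r y$ with the same $L^2$ error — the homogeneity about the origin means the rescaled picture on $\B r y$ is, up to the controlled geometric distortion of moving the center within a fixed annulus, the same as on $\B r 0$. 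I expect this transfer-of-symmetry-between-nested-balls step to be the main obstacle: one must be careful that $y$ may lie near $\partial\B 3 0$ while $r$ is as large as $2$, so the balls $\B r 0$ and $\B r y$ need not be nested and one genuinely uses the global $0$-symmetry of the limit (defined on all of $\R^m$ by homogeneous extension, as in Corollary \ref{cor_weak_harm_limit}) rather than just its behavior on a single ball. Once the limiting contradiction is in place, unwinding gives the desired $\delta_7$.
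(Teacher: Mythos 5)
Your proposal follows essentially the same contradiction-and-compactness route as the paper, and it is correct. The transfer step you flag as the ``main obstacle'' is actually cleaner than you anticipate: once the limit $u$ is homogeneous at both $0$ and $y$, at every $z\notin\{0,y\}$ one has $(z-0)\cdot\nabla u=(z-y)\cdot\nabla u=0$, hence $y\cdot\nabla u=0$, so $u(y+z)=u(z)$ \emph{exactly}; thus
\begin{gather}
\fint_{\B r 0}\abs{u(y+z)-h(z)}^2\,dz=\fint_{\B r 0}\abs{u(z)-h(z)}^2\,dz
\end{gather}
and the same model $h$ works on $\B r y$ with no rescaling or ``moving along rays'' needed, and no nesting issue arises since $y\in\overline{\B 3 0}$ and $r\le 2$ keep $\B r y\subset\B 5 0\subset\B 8 0$, so you never need a global homogeneous extension.
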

\begin{remark}
 Note that we do not assume that $\abs {y-0}\geq \rho$, and thus we cannot obtain in general that $u$ will be invariant on the line joining $0$ and $y$.
\end{remark}
\begin{proof}
 Consider again a contradicting sequence. In particular, $u_i$ will be a sequence of maps with $\hat \theta(0,1)-\hat \theta(0,1/2)\leq i^{-1}$, and for some $y_i\in \B 3 0$ we will have $\hat \theta(y_i,2)-\hat \theta(y_i,1/2)\leq i^{-1}$. Moreover, $\B {r}{y_i}$ is not $(k+1,\epsilon)$-symmetric, but $\B r 0$ is $(k+1,\epsilon/2)$-symmetric.

Thus, for all $i$, there exists an $h_i:\B {10}{0}\to N$ which is $k+1$-symmetric such that
 \begin{gather}
  \fint_{\B r 0} \abs{u_i-h_i}^2\leq \epsilon/2\, .
 \end{gather}

By passing to a subsequence if necessary, we obtain $y_i\to y\in \overline{\B 3 0}$, $u_i\wto u$ in $H^1(\B 8 0)$ and thus $u_i\to u$ in the strong $L^2(\B 8 0)$ sense, with $u$ weakly harmonic and homogeneous wrt $0$ and $y$. Moreover, $h_i\wto h$ in $L^2$, where $h$ is a $k+1$-symmetric map with
\begin{gather}
 \fint_{\B r 0} \abs{u-h}^2 \leq \limsup_{i\to \infty} \fint_{\B r 0} \abs{u-h_i}^2 \leq 2\epsilon/3\, .
\end{gather}

If $y=0$, then obviously we obtain a contradiction. Indeed, since $N$ is compact, we have
\begin{gather}
 \lim_{i\to \infty} \fint_{\B {r}{y_i}} \abs{u_i-h}^2 =\fint_{\B r 0} \abs{u-h}^2\leq 2\epsilon/3\, .
\end{gather}

In a similar way, if $y\neq 0$, then $u$ is invariant wrt the line joining $0$ and $y$, and we get
\begin{gather}
 \lim_{i\to \infty} \ton{\fint_{\B {r}{0}} \abs{u_i(y_i+z)-h(z)}^2 \ dz}^{1/2} \leq \lim_{i\to \infty} \ton{\fint_{\B {r}{0}} \abs{u_i(y_i+z)-u(y+z)}^2 \ dz}^{1/2} + (2\epsilon/3)^{1/2}=(2\epsilon/3)^{1/2}\, .
\end{gather}

\end{proof}
\vspace{.5cm}

\section{Reifenberg theorem}
In this section, we quote without proof the appropriate Reifenberg results from \cite{nava+} that are needed in this paper.  These results have since been extended and improved upon, with simplified proofs, in \cite{ENV_Reif}. Before doing that, we write a brief introduction to these results, and state the necessary definitions.

\subsection{Reifenberg theorem in literature}
The Reifenberg topological disk theorem, introduced in \cite{reif_orig}, states that if a subset $S\subset \R^n$ is sufficiently close in the Hausdorff sense at all scales to a $k$-dimensional plane, then $S$ is $C^{0,\alpha}$-homeomorphic to a disk. 

This theorem has been improved during the years, with the objective of obtaining some $C^{0,1}$ information on $S$. For example, Toro proves in \cite{toro_reif} that the correspondence is $C^{0,1}$ under some summability assumption on Jones' $\beta_2$ numbers for the set $S$. More general results along the same line are available in \cite{david_semmes,davidtoro}. 

It is worth mentioning that just by working with $\beta_2$-numbers, without the Reifenberg-type techniques, rectifiability results for sets and measures similar to the ones discussed have been obtained very recently in \cite{AzzTol,Tol}.

In \cite{nava+} and \cite{ENV_Reif}, effective estimates in the form of upper Ahlfor's regularity and rectifiability are obtained for sets and measures under the appropriate Dini conditions on the $\beta_2$-numbers.  These results play a key role in the finiteness and structure theorems of this paper.  Here we quote these theorems, and refer the reader to \cite{nava+} and \cite{ENV_Reif} for their proofs.

%This in particular implies that no topological structure on the set $S$ can be proved, just volume estimates and rectifiability results.

% \textcolor{red}{!!! CITE NICK? !!!}

\subsection{Definitions}
Here we define the so-called Jones' $\beta_2$ numbers. 
\begin{definition}\label{deph_D}
Let $\mu$ be a nonnegative Radon measure on $\B 3 0$, and fix any $r>0$ and $k\in \N$. We define the $k$-dimensional Jones' $\beta$ number by
\begin{gather}\label{eq_deph_D}
\beta_{2,\mu}^k (x,r)^2 = \min_{V\subseteq \R^n}\int_{B_{r}(x)}\frac{d^2(y,V)}{r^2}\,\frac{d\mu(y)}{r^k}\, ,
\end{gather}
where the minimum is taken over all affine subspaces $V$ of dimension $k$.
\end{definition}

It is clear that $\beta_2$ is suitable to quantify how close the support of $\mu$ is to a $k$-dimensional subspace. Note that the scaling factor $r^{-2-k}$ in the definition of $\mu$ is chosen to make $\beta_2$ ``scale invariant'' in some sense. Indeed, $r^{-2}$ takes care of the scaling properties of $d(x,V)^2$, and since we expect $\mu$ to behave like a $k$-dimensional measure, $r^{-k}\mu(\B r x)$ is the right scale invariant quantity to consider.  This is the case if, for example, $\mu$ if $k$-\al regular, in the sense that for all $x\in \supp \mu\cap \B 1 0$ and $r\leq 1$
\begin{gather}
 C^{-1} r^k \leq \mu(\B r x) \leq C r^k\, ,
\end{gather}
or if $\mu$ is the $k$-dimensional Hausdorff measure on any set $S$.  It is worth recording two basic properties of $\beta$.
\begin{lemma}
 $\beta^2_{2}$ is monotone in $\mu$, in the sense that if $\mu\leq \nu$, then for all $x,r$
 \begin{gather}
  \beta^k_{2,\mu} (x,r)^2\leq \beta^k_{2,\nu} (x,r)^2\, .
 \end{gather}
Moreover, if $\abs {x-y}\leq r$, then
\begin{gather}
 \beta^k_{2,\mu}(x,r)^2 \leq 2^{k+2} \beta^k_{2,\mu}(y,2r)^2\, .
\end{gather}
\end{lemma}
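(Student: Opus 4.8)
The plan is to read both assertions straight off the variational definition of $\beta^k_{2,\mu}$, using only nonnegativity of the distance-squared integrand and monotonicity of the integral; neither claim requires any machinery.

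For the monotonicity in $\mu$, I would fix an affine $k$-plane $V^*$ realizing the minimum in the definition of $\beta^k_{2,\nu}(x,r)^2$. Since the integrand $d^2(z,V^*)$ is nonnegative, the hypothesis $\mu\leq\nu$ yields $\int_{\B r x}\frac{d^2(z,V^*)}{r^2}\,\frac{d\mu(z)}{r^k}\leq\int_{\B r x}\frac{d^2(z,V^*)}{r^2}\,\frac{d\nu(z)}{r^k}=\beta^k_{2,\nu}(x,r)^2$. On the other hand $\beta^k_{2,\mu}(x,r)^2$ is, by definition, no larger than the left-hand side, being the infimum over all planes; combining the two gives $\beta^k_{2,\mu}(x,r)^2\leq\beta^k_{2,\nu}(x,r)^2$.

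For the doubling-type estimate, I would first observe that $\abs{x-y}\leq r$ forces $\B r x\subseteq\B{2r}y$. Taking $V^*$ optimal for $\beta^k_{2,\mu}(y,2r)^2$, I would then bound $\beta^k_{2,\mu}(x,r)^2$ by the value of the functional at $V^*$ centered at $x$ with radius $r$, rewrite the normalization via $r^{-2-k}=2^{k+2}(2r)^{-2-k}$, and enlarge the integration domain from $\B r x$ to $\B{2r}y$ --- legitimate because the integrand is nonnegative. This produces $\beta^k_{2,\mu}(x,r)^2\leq 2^{k+2}\int_{\B{2r}y}\frac{d^2(z,V^*)}{(2r)^2}\,\frac{d\mu(z)}{(2r)^k}=2^{k+2}\beta^k_{2,\mu}(y,2r)^2$, as desired.

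I do not expect any genuine obstacle: the whole lemma is a formal consequence of the definition of $\beta$ as an infimum together with monotonicity of the Lebesgue integral. The only point deserving care is the scale bookkeeping in the second part, where the constant $2^{k+2}$ comes precisely from the mismatch between the normalizations $r^{-2-k}$ and $(2r)^{-2-k}$, and a moment's inattention could cost a power of two.
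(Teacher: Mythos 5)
Your argument is correct and is exactly the elementary reading of the definition that the paper has in mind; the paper's own ``proof'' is the single sentence that both facts are immediate from the definition. Your computation fills in precisely the ball inclusion $\B r x\subseteq\B{2r}y$ and the scale bookkeeping $r^{-2-k}=2^{k+2}(2r)^{-2-k}$ that the paper leaves implicit, so there is nothing to add.
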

\begin{proof}
 Both these properties are immediate consequences of the definition of $\beta$.
\end{proof}

\subsection{Generalized Reifenberg Theorems}
Now we are ready to state the two versions of the quantitative Reifenberg theorems from \cite{nava+} that we will use to prove the uniform volume bounds on $\cS^k_{\epsilon,r}$.

\begin{theorem}\cite[theorem 3.4]{nava+}\label{th_disc_reif}
For some constants $\delta_R(m)$ and $C_R(m)$ depending only on the dimension $m$, the following holds. Let $\{\B {r_x/5}{x}\}_{x\in \cC }\subseteq \B 3 0 \subset \R^m$ be a collection of pairwise disjoint balls with their centers $x\in \B 1 0$, and let $\mu\equiv \sum_{x\in \cC}\omega_k r_x^k \delta_{x}$ be the associated measure. Assume that for each $B_r(x)\subseteq B_2$
\begin{gather}\label{eq_reif_ass}
 \int_{\B r x )}\ton{\int_0^r \beta^k_{2,\mu}(y,s)^2 \,{\frac{ds}{s}}}\, d\mu(y)<\delta_R^2 r^{k}\, .
\end{gather}
Then we have the uniform estimates
\begin{gather}
\sum_{x\in \cC} r_x^k<C_R(m)\, .
\end{gather}
\end{theorem}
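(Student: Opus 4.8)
The plan is to prove Theorem~\ref{th_disc_reif} by a stopping-time / corona-type construction combined with a covering argument, exactly as in \cite{nava+}. The underlying philosophy is that the Dini-type condition \eqref{eq_reif_ass} forces the measure $\mu$ to be, at most scales and most locations, quantitatively close to lying on a $k$-plane, and a set with this property satisfies upper Ahlfors regularity (hence the uniform bound $\sum r_x^k < C_R(m)$). First I would reduce to showing that $\mu(\B 1 0) \le C_R(m)$; indeed $\mu(\B 1 0) = \sum_{x \in \cC} \omega_k r_x^k$, so controlling the total mass is exactly controlling the sum. The key reduction is a \emph{self-improvement} step: it suffices to prove that under hypothesis \eqref{eq_reif_ass}, for \emph{every} ball $\B r y \subseteq \B 2 0$ one has $\mu(\B r y) \le C_R(m) r^k$; then applying this at $y$ a center in $\cC$ and $r = r_x$ and using disjointness of the $\B{r_x/5}{x}$ gives the conclusion (each $\B{r_x/5}{x}$ has $\mu$-mass $\omega_k r_x^k \ge c(m) r_x^k$, and they pack inside $\B 2 0$ which has bounded mass).

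The main construction I would carry out is the following inductive covering. Fix the ball $\B r y$ and, after rescaling, assume $r = 1$. Build a decreasing sequence of coverings of $\supp\mu \cap \B 1 y$ by balls at dyadic scales $2^{-j}$: at each stage, a ball $\B s z$ from the current generation is kept (declared ``good'') if the best $k$-plane $V_{z,s}$ for $\mu$ on $\B s z$ approximates $\supp\mu$ well, and in that case one covers $\supp \mu \cap \B s z$ efficiently by $\lesssim (s/s')^k$ balls of the next scale $s'$ whose centers are near $V_{z,s}$ — here one uses that points well-approximated by a $k$-plane can be covered by roughly the number of balls one would need to cover a $k$-dimensional disk. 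A ball is declared ``bad'' and frozen (removed from further subdivision, its mass charged to an error term) when the local $\beta_2^k(z,s)$ exceeds a threshold $\sim \delta_R$, or more precisely when the accumulated Dini sum $\int_0^s \beta_{2,\mu}^k(z,t)^2 \frac{dt}{t}$ first exceeds a fixed small constant. The $\mu$-measure of the union of bad balls at all scales is controlled, via Chebyshev applied to hypothesis \eqref{eq_reif_ass} and disjointifying the bad balls within each generation, by $C(m)\delta_R^{-2}\cdot \delta_R^2 = C(m)$; choosing $\delta_R$ small makes this a controlled fraction. The good part of $\mu$, being covered at every scale by $k$-plane-adapted balls whose counts multiply like $k$-dimensional content, contributes at most $C(m)$ to the mass as well. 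Summing the good and bad contributions and iterating the estimate (the bad balls themselves become new ``root'' balls to which the same argument applies, giving a geometric series since each level contributes a fixed fraction) yields $\mu(\B 1 y) \le C_R(m)$.

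The technically delicate points, and where I expect the real work to lie, are: (i) showing that a small \emph{integrated} $\beta_2^k$ at scale $s$ combined with smallness at nearby scales actually forces the \emph{support} of $\mu$ (not just its $L^2$ barycenter-type quantity) to lie in a thin tube around a single $k$-plane — this requires comparing best planes across consecutive scales and controlling how much they can tilt, which is where the Dini (rather than merely ``small at each scale'') hypothesis is essential, since the tilt of the plane from scale $s$ to $s/2$ is bounded by $\beta_2^k$ at those scales and one needs $\sum$ of these tilts to telescope; (ii) the bookkeeping that keeps the constants dimensional: one must verify that the covering counts at each refinement genuinely behave like $(\text{scale ratio})^{-k}$, which relies on the good balls' centers being $\eta s$-dense in a $k$-disk for controlled $\eta$, itself a consequence of the tube estimate plus the fact that good balls are nonempty in $\supp\mu$; and (iii) the summation of the error terms across the infinitely many scales and the inductive passage to bad sub-balls, which must be arranged so the total is a convergent geometric series rather than merely finite-at-each-stage. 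As the problem statement notes, this is essentially \cite[theorem 3.4]{nava+}, so I would quote that proof; the hardest genuinely new-looking step, were one to reprove it, is the tube/tilting estimate in (i).
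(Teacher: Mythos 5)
The paper does not prove Theorem~\ref{th_disc_reif} at all: Section~5 explicitly states that the Reifenberg results are ``quote[d] without proof'' from \cite{nava+}, and the remark below the theorem points the reader to that paper's $W^{1,p}$-Reifenberg machinery. So there is no in-paper argument to compare against; you are correct that one should simply cite \cite[theorem~3.4]{nava+}.

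That said, your outline and the actual argument in \cite{nava+} (and its streamlining in \cite{ENV_Reif}) take genuinely different routes to the same conclusion, and it is worth noting the distinction. You sketch a corona/stopping-time decomposition in the David--Semmes--Toro tradition: freeze ``bad'' balls where the accumulated Dini integral of $\beta_2$ is large, control their total mass by Chebyshev against hypothesis \eqref{eq_reif_ass}, and on ``good'' balls try to propagate a $k$-dimensional covering count down the dyadic scales. The argument in \cite{nava+} instead builds, inductively in scale, a family of approximating $k$-dimensional submanifolds together with explicit maps (``squash''/projection maps) whose composition is controlled in $W^{1,p}$; the upper Ahlfors bound $\mu(\B r y)\le C r^k$ is then extracted from the bi-Lipschitz estimate on these maps applied to the packing measure, rather than from a direct covering count. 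The map-based route has a concrete advantage precisely at the point you flag as delicate, namely your item~(ii): in a pure covering-count argument, bounding the number of next-generation balls by $(\text{scale ratio})^{-k}$ requires already knowing a mass/density bound at the smaller scale, and the Chebyshev control of the bad set likewise presupposes control of the measure on which you integrate --- this circularity is what the self-improving induction must break, and doing so cleanly is exactly the role played by the $W^{1,p}$ map and the intermediate manifolds in \cite{nava+}. Your own paper sidesteps the whole issue for a related purpose: in Section~6.2.4 the ``upwards induction'' on $\mu_t(\B t x)\le C_R t^k$ mimics this self-improvement, but there the Reifenberg theorem itself is taken as a black box and only the hypothesis \eqref{eq_reif_ass} needs to be verified at each level. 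In short, your sketch is a legitimate alternate strategy (closer in spirit to \cite{toro_reif,david_semmes,AzzTol}), but it is not the proof in \cite{nava+}; since the theorem is cited rather than reproved, this discrepancy is immaterial to the paper, and your instinct to quote the result is the right one.
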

\begin{remark}
See \cite{ENV_Reif} for a more recent generalization of the above.	
\end{remark}

This theorem will be used on some carefully chosen discrete approximation of the singular set of $u$. In order to guarantee the assumption \eqref{eq_reif_ass}, we will use the $\beta_2$ estimates of Section \ref{sec_dist_est}.  The next result is the rectifiable Reifenberg from \cite{nava+}, see also \cite{ENV_Reif}:

\begin{theorem}\cite[theorem 3.3]{nava+}\label{th_rect_reif}
There exists $\delta_R(m)>0$ such that the following holds.  Let $S\subseteq \B 3 0 \subseteq \dR^n$ be a $\lambda^k$-measurable subset, and assume for each $\B r x$ with $x\in \B 1 0 $ and $r\leq 1$ we have
\begin{align}\label{eq_reif_ass_rect}
\int_{S\cap \B r x}\,\ton{\int_0^r \beta^k_{2,\lambda^k|_S}(y,s)^2\,\frac{ds}{s}}\, d\lambda^k(y)<&\delta_R^2r^{k}\, . 
\end{align}
Then $S$ is $k$-rectifiable such that for each $x\in S$ we have $\lambda^k(B_r(x))<C_R r^k$.
\end{theorem}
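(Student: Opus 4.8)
The statement is the rectifiable Reifenberg theorem of \cite{nava+} (quoted again in \cite{ENV_Reif}), so rather than a full argument I will sketch the strategy one would follow to prove it. The plan is to establish the two conclusions — the upper Ahlfors bound $\lambda^k(\B r x)\le C_R r^k$ and the $k$-rectifiability of $S$ — \emph{simultaneously}, by an induction on scale whose engine is the construction of controlled bi-Lipschitz parametrizations of (most of) $S$ out of the best-approximating $k$-planes at each scale.

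First I would fix $x_0\in S$ and, after rescaling, work inside $\B 1{x_0}$; it then suffices to parametrize a definite fraction of $\lambda^k\ton{S\cap\B1{x_0}}$ by a single bi-Lipschitz image of a subset of $\R^k$, together with a packing bound, and then iterate on the uncovered remainder. The parametrization is built scale by scale: at the dyadic scale $2^{-j}$ take a Vitali-type cover of $S$ by balls $\B{2^{-j}}{x}$, let $V_{x,2^{-j}}$ be a $k$-plane realizing $\beta^k_{2,\lambda^k|_S}(x,2^{-j})$, and glue the local affine approximations $y\mapsto \pi_{V_{x,2^{-j}}}(y)$ via a partition of unity to obtain a map $\sigma_j\colon\R^k\to\R^n$, defined inductively from $\sigma_{j-1}$ by modifying it only over the balls of the new scale. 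The standard ``squash and stretch'' estimates give, at a good point $y$, that $\abs{\sigma_{j+1}(y)-\sigma_j(y)}\lesssim 2^{-j}\bar\beta_j(y)$ and that the bi-Lipschitz defect of $\sigma_{j+1}\circ\sigma_j^{-1}$ is controlled by $\bar\beta_j(y)^2$, where $\bar\beta_j(y)$ is an average of $\beta^k_{2,\lambda^k|_S}(\cdot,2^{-j})$ near $y$.

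The main difficulty — and what makes this far more delicate than the classical topological Reifenberg theorem — is that \eqref{eq_reif_ass_rect} controls the $\beta$-numbers only in an $\ell^2$-summed sense along scales, whereas convergence of $\sum_j(\sigma_{j+1}-\sigma_j)$ to a bi-Lipschitz limit naively requires $\ell^1$-summability of $\bar\beta_j(y)$. This I would resolve by a stopping-time (corona) decomposition: call a ball $\B{2^{-j}}{x}$ bad if $\beta^k_{2,\lambda^k|_S}(x,2^{-j})$ exceeds a small threshold $\eta$, stop the construction inside bad balls, and run it only along chains of good balls, where $\bar\beta_j\le\eta$ forces $\sum_j\bar\beta_j(y)^2$ to dominate $\eta\sum_j\bar\beta_j(y)$, yielding the needed $\ell^1$ control with a factor $\eta^{-1}$ that \eqref{eq_reif_ass_rect} absorbs. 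Then, applying Chebyshev to \eqref{eq_reif_ass_rect} together with the discrete Reifenberg estimate of Theorem \ref{th_disc_reif} to the centers of the bad balls, one shows that the total $\lambda^k$-mass contained in bad balls is at most a definite fraction (say one half) of $\lambda^k\ton{S\cap\B1{x_0}}$; this step simultaneously produces the packing/Ahlfors bound and leaves a definite fraction of $S$ covered by the bi-Lipschitz image $\sigma_\infty(\R^k)$.

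Iterating the whole argument on the uncovered remainder then exhausts $S$, up to a $\lambda^k$-null set, by countably many bi-Lipschitz images of subsets of $\R^k$, which is exactly $k$-rectifiability, while the Ahlfors estimate passes to the limit. I expect the genuinely hard step to be the coupled induction itself: keeping the bi-Lipschitz constants uniformly close to $1$ at every scale while at the same time controlling the measure of the bad set (which in turn is what one needs in order to compare the discrete measure on bad-ball centers with $\lambda^k|_S$ and thereby verify the hypothesis of Theorem \ref{th_disc_reif} at the next scale). Everything else is bookkeeping organized around the ``squash and stretch'' estimate and the Dini-type absorption of the $\eta^{-1}$ loss.
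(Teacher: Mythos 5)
This statement is quoted verbatim from \cite{nava+} (labelled there as theorem 3.3), and the present paper deliberately gives no proof of it: the text in the surrounding section says explicitly that these Reifenberg results are stated without proof and that the reader is referred to \cite{nava+} and \cite{ENV_Reif}. So there is no ``paper's own proof'' against which your sketch can be compared; the only thing I can assess is whether your outline is a plausible reconstruction of the argument in the cited source.

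On that score, your high-level description is broadly consistent with what \cite{nava+} does: scale-by-scale construction of maps from local best-approximating $k$-planes glued by partitions of unity, a corona-type stopping-time decomposition, and a coupled induction in which the discrete Reifenberg estimate (Theorem \ref{th_disc_reif}) supplies the packing bound needed to verify its own hypothesis at the next stage. Two cautionary points, though. First, your framing of the central difficulty as ``$\ell^2$ versus $\ell^1$ summability'' is somewhat misplaced. The $C^0$-displacements $\sigma_{j+1}-\sigma_j$ carry a geometric factor $2^{-j}$, so their sum converges with no Dini condition at all; what actually needs the hypothesis \eqref{eq_reif_ass_rect} is the \emph{bi-Lipschitz distortion}, and the key technical estimate in \cite{nava+} (as in Toro and David--Toro) is precisely that this distortion is of size $\beta^2$, not $\beta$, so that the $\ell^2$ Dini bound is already what is required. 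The stopping time is used to control the set of scales where $\beta$ is too large for those perturbative estimates to apply and to localize the argument into tree pieces, not primarily to convert an $\ell^2$ bound into an $\ell^1$ bound. Second, the remark in the paper immediately after the theorem indicates that the mechanism in \cite{nava+} is a $W^{1,p}$-Reifenberg theorem from which rectifiability is deduced, rather than a direct bi-Lipschitz parametrization; your sketch elides that intermediate structure. If you want an accurate proof rather than a caricature, you should consult \cite{nava+} or, for a streamlined and more general treatment, \cite{ENV_Reif}.
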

\begin{remark}
The basis for the ideas in the above are a technical refinement of a new $W^{1,p}$-Reifenberg which is proved by the authors in \cite{nava+}.  Since we do not directly need this $W^{1,p}$-Reifenberg we do not state it here.
\end{remark}
\begin{remark}
Note that \cite[theorem 1.1]{AzzTol} proved the above without the Ahlfor's upper bound.  See also \cite{ENV_Reif} for a more recent generalization, which both applies to a much more general class of measures, and does so under weaker assumptions.
% \begin{gather}
%  \int_0^r \beta^2_{\lambda^k|_S,k} (y,s)\,\frac{ds}{s}<\infty\, .
% \end{gather}
%in order to conclude rectifiability. However, in the case of singular sets of approximate harmonic maps, having small bound or finite bound doesn't change much.
\end{remark}
%\begin{remark}
% Note that we quoted only part of this theorem from \cite[theorem 3.3]{nava+}. Indeed, in the original theorem also some volume bounds are proved as a consequence of the hypothesis.
 
% Note also that, without the volume bounds, this theorem is a simple corollary of the more general \cite[theorem 1.1]{AzzTol}, which only requires that for $\lambda^k$ almost all $x\in S$:
% \begin{gather}
%  \int_0^r \beta^2_{\lambda^k|_S,k} (y,s)\,\frac{ds}{s}<\infty\, .
% \end{gather}
%in order to conclude rectifiability. However, in the case of singular sets of approximate harmonic maps, having small bound or finite bound doesn't change much.
%\end{remark}
\vspace{.5cm}

\section{Proof of the main theorems}
In order to be in a position to prove the main theorem, we need to obtain two important ingredients. First we will discuss estimates linking the $\beta_2$ of a generic measure $\mu$ with support contained in $\cS^k_{\epsilon,r}(u)$ and the monotone quantity $\hat \theta$, and later on we will describe a covering argument that will allow us to split the covering of the set $\cS^k_{\epsilon,r}(u)$ into suitable pieces with nice estimates.

\subsection{\texorpdfstring{$L^2$ subspace approximation theorem}{L2 subspace approximation theorem}}\label{sec_dist_est}
The aim of this section is to prove that the $\beta_2$ defined in the previous sections can be controlled using the monotone quantities $\theta$ and $\hat \theta$, and the parameters $\cF$ and $\gamma$. 

In order to ease the notation, we define for $x\in \B 1 0$ and $r>0$ the quantity
\begin{align}
W_{r}(x) \equiv W_{8r,r}(x)\equiv \int_{\B {8r} {x}\setminus \B {r}{x}} \frac{\abs{(y-x)\cdot \nabla u(y)}^2}{\abs {y-x}^m}d\Vol(y) \geq 0\, .
\end{align}
Note that for an approximate harmonic map, by \eqref{eq_hat'} we have the bound
\begin{gather}
 W_r(x)\leq \hat \theta(x,8r)-\hat\theta(x,r)\, .
\end{gather}

Note that, at least philosophically, bounds on $W_r(x)$ and $W_r(y)$, for $\abs {x-y}\leq r$, give bounds on the scale-invariant $L^2$ norm of $(x-y)\cdot \nabla u$ in an annulus around $x$ and $y$. This is an easy consequence of the fact that for all $z\in \R^m$, the vectors $(z-y)$ and $(z-x)$ always span the vector $(x-y)$. In this section, we will exploit this simple idea and some easy tricks to prove $\beta_2$ estimates in a very general setting.

\vspace{3mm}

The main estimate in this section is the following. Note that, up to minor technical details, this theorem is similar to \cite[theorem 7.1]{nava+}.
\begin{theorem}\label{th_best_app}
Let $u$ be as above, and fix $\epsilon>0$, $0<r\leq 1$ and $x\in \B 1 0$. There exists a constant $C_1(m,\KN,\Lambda,\gamma,\epsilon)>0$ such that if $\cF\leq \delta_5$ and $\B {8r} x$ is $(0,\delta_5)$-symmetric but not $(k+1,\epsilon)$-symmetric, then for any nonnegative finite measure $\mu$ on $\B {r} x$ we can estimate
\begin{align}\label{eq_D2W}
 \beta^k_{2,\mu}(x,r)^2  = \inf_{V} r^{-2-k}\int_{\B {r}{x}} d^2(x,V)\,d\mu(x) \leq C_1 r^{-k}\int_{\B {r}{x}} W_r (x)\,d\mu(x)\, , 
\end{align}
where the $\inf$ is taken over all $k$-dimensional affine subspaces $V\subseteq \R^m$.
\end{theorem}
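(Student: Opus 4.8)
The plan is to reduce the $\beta_2$-estimate to a pointwise comparison between the squared distance $d^2(y,V)$, for a well-chosen $k$-plane $V$, and the integrand $W_r$. The natural choice for $V$ is the \emph{least-squares best $k$-plane} for $\mu$ on $\B r x$, i.e.\ the affine subspace through the barycenter $\bar x = \fint y\, d\mu$ spanned by the top $k$ eigenvectors of the (centered) second-moment bilinear form $b(v,w) = \int_{\B r x} \langle y-\bar x, v\rangle\langle y-\bar x, w\rangle\, d\mu(y)$. The key algebraic fact, which is exactly the mechanism of \cite[theorem 7.1]{nava+}, is that $\beta^k_{2,\mu}(x,r)^2 r^{2+k}$ equals the sum of the $m-k$ smallest eigenvalues of $b$, so it suffices to bound, for \emph{each} unit vector $w$ orthogonal to $V$, the quantity $b(w,w) = \int |\langle y-\bar x, w\rangle|^2\, d\mu(y)$ by $C r^2 \int W_r\, d\mu$.

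So the first real step is the pointwise estimate: for $\mu$-a.e.\ $y$ and $z$ in $\B r x$, and any unit vector $w$,
\[
 |\langle y - z, w\rangle|^2 \;\le\; C(m)\, r^2 \Big( \text{(scale-invariant radial-energy quantity at $y$)} + \text{(same at $z$)} \Big)\,,
\]
which comes from the identity $(y-z) = (y-x') - (z-x')$ and the fact that $(y-x')\cdot\nabla u$ controls a difference-quotient of $u$ in the annulus $A_{r,8r}$. Concretely one uses that $W_r(y) = \int_{A_{r,8r}(y)} |(\cdot - y)\cdot\nabla u|^2/|\cdot - y|^m$ dominates $\fint_{A}|\partial_{\hat\rho} u|^2$ up to dimensional constants, and then Fubini together with Lemma \ref{lemma_lower_bound} converts a \emph{lower} bound on $\int_A |P\cdot\nabla u|^2$ for every $(k+1)$-plane $P$ (valid precisely because $\B{8r}{x}$ is $(0,\delta_5)$-symmetric but not $(k+1,\epsilon)$-symmetric) into the statement that the second-moment form cannot have $k+1$ large eigenvalues without forcing $\int W_r\, d\mu$ to be correspondingly large. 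That is: if $b$ had a $(k+1)$-dimensional subspace $P$ on which it is $\gtrsim r^2$, then $\mu$ would be spread out enough in the $P$-directions that, after normalizing and blowing up, the weak limit $u_\infty$ would be $0$-symmetric (pinching) yet have $|P\cdot\nabla u_\infty| \equiv 0$, hence $(k+1)$-symmetric, contradicting Lemma \ref{lemma_lower_bound}.

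The cleanest route, and the one I'd actually write, is a compactness/contradiction argument at the level of the whole inequality \eqref{eq_D2W}: rescale so $x=0$, $r=1$; suppose a sequence $u_i$, measures $\mu_i$ (normalized to mass $1$, say) with $\B 8 0$ being $(0,\delta_5)$-symmetric and not $(k+1,\epsilon)$-symmetric for $u_i$, $\cF_i\to 0$, but $\beta^k_{2,\mu_i}(0,1)^2 \big/ \int W_1\, d\mu_i \to \infty$. Normalizing $\mu_i$ and passing to a subsequence, $\mu_i \wto \mu_\infty$, $u_i \wto u_\infty$ weakly harmonic and, by Lemma \ref{lemma_lower_bound}-type pinching, $0$-symmetric; the hypothesis forces $\int W_1\, d\mu_i \to 0$, hence $(\cdot)\cdot\nabla u_\infty = 0$ on the relevant annulus for $\mu_\infty$-a.e.\ center, which by the radial-invariance argument (as in Lemma \ref{lemma_k-span}) makes $u_\infty$ invariant along the best $(k+1)$-plane of $\mu_\infty$ — but then $u_\infty$ is $(k+1)$-symmetric, contradicting the non-symmetry that passes to the limit by strong $L^2$ convergence. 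The constant $C_1$ then exists by the usual non-effective-to-effective dichotomy.

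The main obstacle I anticipate is making the passage from ``$W_r(y)$ small'' to ``$(y-z)\cdot\nabla u$ small in an $L^2$-averaged sense near $z$'' fully quantitative and uniform in the measure $\mu$ — i.e.\ the Fubini bookkeeping that shows the best-$k$-plane deficit $\sum_{j>k}\lambda_j(b)$ is genuinely controlled by $\int W_r\, d\mu$ and not just by $\sup W_r$. This is exactly the technical heart of \cite[theorem 7.1]{nava+}: one must integrate the pointwise annular estimate against $d\mu(y)\, d\mu(z)$, swap the order of integration with the $z\mapsto$ annulus-around-$y$ region, and absorb the overlap using that all points lie in $\B r x$ so the annuli $A_{r,8r}(y)$ are comparable. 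The error coming from $f$ (the term $\cF$) is handled exactly as in Lemma \ref{lemma_monotone}: it only enters through $\hat\theta$, and the hypothesis $\cF\le\delta_5$ keeps it below threshold, so no new difficulty arises from the approximate (as opposed to exact) harmonicity beyond what is already absorbed into $\hat\theta$ and $W_r$.
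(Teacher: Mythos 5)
Your first two paragraphs correctly identify the paper's mechanism: normalize $\mu$, put the barycenter at the origin, diagonalize the second-moment form $Q(v,w)=\int\langle y-\bar x,v\rangle\langle y-\bar x,w\rangle\,d\mu$, note that $\beta^k_{2,\mu}(x,r)^2 r^{2+k}=\sum_{j>k}\lambda_j\le(m-k)\lambda_{k+1}$, use the orthogonality $\int\langle y-\bar x,v\rangle\,d\mu=0$ to replace $y\cdot\nabla u(z)$ by $(y-z)\cdot\nabla u(z)$, Cauchy--Schwarz, integrate $z$ over an annulus, and finally invoke Lemma~\ref{lemma_lower_bound} (which is where the hypotheses on $(0,\delta_5)$- vs.\ $(k+1,\epsilon)$-symmetry and $\cF\le\delta_5$ enter) to get the lower bound $\int_{A}|V^{k+1}\cdot\nabla u|^2\gtrsim\delta_5$. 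That is exactly the paper's Proposition~\ref{prop_best_V} plus the short combination in Section~\ref{sec_dist_est}. (One caveat: the ``pointwise estimate'' $|\langle y-z,w\rangle|^2\le C r^2(\cdots)$ as literally written is not what is needed or true --- the left side is purely geometric; what is true is the $L^2$-annular statement $\int_A|(y-z)\cdot\nabla u|^2\lesssim r^m\big(W_r(y)+W_r(z)\big)$, which feeds into the Cauchy--Schwarz step. But I read your intent correctly.)

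The genuine gap is in the route you say you would actually write, the compactness/contradiction argument at the level of \eqref{eq_D2W}. Normalizing $\mu_i$ to mass $1$, the ratio $\beta^k_{2,\mu_i}(0,1)^2/\int W_1\,d\mu_i\to\infty$ only forces $\int W_1\,d\mu_i\to 0$ (since $\beta^k_{2,\mu_i}\le 4$); it does \emph{not} prevent $\beta^k_{2,\mu_i}\to 0$ as well, just more slowly. Passing to a weak limit $\mu_\infty$, you may perfectly well land on a measure supported in a $k$-plane, in which case $\beta^k_{2,\mu_\infty}=0$ and ``the best $(k+1)$-plane of $\mu_\infty$'' is ill-defined: radial invariance from $\mu_\infty$-a.e.\ center gives at most $k$ independent translation directions, $u_\infty$ is only $k$-symmetric, and there is no contradiction with non-$(k+1,\epsilon)$-symmetry. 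Rescaling $\mu$ does not help because both $\beta^k_{2,\mu}$ and $\int W_1\,d\mu$ are linear in $\mu$, so the offending ratio is already scale-invariant. This degeneration of both sides is a genuine feature of the estimate (take $\mu$ a Dirac, or two Diracs in a $k$-plane, and both sides of \eqref{eq_D2W} vanish), so no soft argument can rule it out; one really must track how $\lambda_{k+1}$ is controlled by $\int W_r\,d\mu$, which is precisely the Fubini/annular bookkeeping you flagged as the obstacle and then tried to avoid. A secondary issue: you take $\cF_i\to0$ in the compactness argument, but the theorem only assumes $\cF\le\delta_5$ with no blow-up in play at fixed scale $r$, so the limit need not be a genuine weakly harmonic map and the unique-continuation step is not automatic.

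In short, keep the direct eigenvector argument from your first two paragraphs (make the Cauchy--Schwarz step explicit, as in the paper's Proposition~\ref{prop_best_V}) and drop the compactness shortcut --- it cannot close because the failure mode $\beta^k_{2,\mu_i}\to0$ is compatible with all the hypotheses.
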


\begin{remark}
 As it will be clear in the proof, $\delta_5$ here is the same as the one given by Lemma \ref{lemma_lower_bound}. Moreover, $C_1=c(m) \delta_5^{-1}$.
\end{remark}

\begin{remark}
 Note that the quantity on the rhs of this theorem can be easily estimated in terms of $\hat \theta$ by \eqref{eq_hat'}.
\end{remark}

\vspace{.5 cm}

The proof of this theorem hinges on some manipulations over the eigenvalues and eigenvectors of the ``inertia matrix'' associated to every measure $\mu$.
\subsubsection{Eigenvalue and eigenvectors of the matrix associated to a measure}\label{sec_Q}

Let us consider a probability measure $\mu$ with support in $\B 1 0$, and let $x_{cm}$ be its center of mass, i.e.:
\begin{gather}\label{eq_cm}
x_{cm}=x_{cm}(\mu)\equiv \int x \, d\mu(x)\, .
\end{gather}
Consider the bilinear quadratic form $Q(v,w)$ defined by
 \begin{gather}
  Q(v,w)\equiv \int \qua{(x-x_{cm})\cdot v }\qua{(x-x_{cm})\cdot w}\, d\mu(x) \, .
 \end{gather}
 In this section, we study the eigenvalue and eigenvectors of $Q$ and their relations with the $\beta_2$ defined above.

\begin{definition}\label{deph_lambda_v}
Given a probability measure $\mu\in \B 1 0$, we set $\lambda_1(\mu),\cdots,\lambda_m(\mu)$ to be the eigenvalues of $Q(\mu)$ in decreasing order, and $v_1(\mu),\cdots,v_m(\mu)$ to be its eigenvectors. In case one eigenvalue has higher multiplicity, we take any choice of orthonormal eigenvectors inside the eigenspace.
\end{definition}

Note that by definition of eigenvectors, we have
\begin{gather}\label{eq_Qv}
 Q(v_k)=\lambda_k v_k = \int \qua{(x-x_{cm})\cdot v_k }(x-x_{cm})\, d\mu(x)
\end{gather}
We also have a variational characterization of the eigenvalues given by
\begin{gather}
\lambda_1=\lambda_1(\mu)\equiv \max_{\abs v ^2=1} \int \abs{(x-x_{cm})\cdot v}^2\, d\mu(x)\, .
\end{gather}
and $v_1=v_1(\mu)$ is any of the norm $1$ vectors obtaining this maximum. By induction, we also have
\begin{gather}
\lambda_{k+1}=\lambda_{k+1}(\mu)\equiv \max\cur{ \int \abs{(x-x_{cm})\cdot v}^2\, d\mu(x) \ \ s.t. \ \ \abs v ^2=1\,  \text{and} \ \ \forall i\leq k\, , \  v\cdot v_i=0}\, , 
\end{gather}
and $v_{k+1}$ is a vector obtaining this maximum. Note that, by definition of $v_k$, the subspace $V_k=x_{cm}+\operatorname{span}\{v_1,\ldots,v_k\}$ is the $k$-dimensional affine subspace (or one of the subspaces) achieving the minimum in the $\beta_2$. In other words
\begin{gather}\label{eq_asdf}
\min_{V\subseteq \R^m\, , \ \ \operatorname{dim}(V)=k} \int d^2(x,V)\,d\mu(x) = \int d^2(x,V_k)\,d\mu(x) = \lambda_{k+1}(\mu)+\cdots+\lambda_m(\mu)\, .
\end{gather}
\begin{remark}
 Note that evidently $V_k$ must pass through the center of mass of $\mu$. This is an immediate corollary of Jensen's inequality (or Steiner's theorem).
\end{remark}

\vspace{.5cm}

By simple manipulations with $\lambda_k$ and $v_k$, we obtain the following important estimate:
\begin{proposition}\label{prop_best_V}
Let $u:\B 9 0\to N$ be an $H^1$ map, and let $\mu$ be a probability measure on $\B 1 0$ with $\lambda_k(\mu),v_k(\mu)$ defined as in Definition \ref{deph_lambda_v}.  Then there exists $C(m)>0$ such that
\begin{align}
\lambda_k \int_{A_{3,4}(0)} \abs{v_k\cdot \nabla u(z)}^2\,d\Vol (z) \leq C(m) \int W_0(x)\, d\mu(x)\, .
\end{align}
\end{proposition}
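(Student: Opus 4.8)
The plan is to prove this pointwise-in-$\mu$ estimate by a direct computation that expresses the quantity $\lambda_k \int_{A_{3,4}(0)} |v_k\cdot\nabla u|^2$ in terms of integrals of radial derivatives of $u$, and then bound those by $\int W_0(x)\,d\mu(x)$. The starting point is the eigenvector identity \eqref{eq_Qv}: $\lambda_k v_k = \int [(x-x_{cm})\cdot v_k](x-x_{cm})\,d\mu(x)$. Pairing this with $\nabla u(z)$ for a fixed $z\in A_{3,4}(0)$ gives $\lambda_k\, v_k\cdot\nabla u(z) = \int [(x-x_{cm})\cdot v_k]\,[(x-x_{cm})\cdot\nabla u(z)]\,d\mu(x)$. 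The key algebraic trick, as hinted in the text, is that for any $z$ the vector $x-x_{cm}$ can be written as $(x-z)-(x_{cm}-z)$, so $(x-x_{cm})\cdot\nabla u(z) = (x-z)\cdot\nabla u(z) - (x_{cm}-z)\cdot\nabla u(z)$; this splits the integrand into a piece controlled by the radial derivative of $u$ about the point $x$ (which is what $W_0(x)$ measures) and a piece controlled by the radial derivative about $x_{cm}$.

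First I would fix $z\in A_{3,4}(0)$, use the above decomposition, and apply Cauchy–Schwarz in $\mu$ together with $|(x-x_{cm})\cdot v_k|\le |x-x_{cm}|\le$ (a constant, since $\supp\mu\subset \B 1 0$) to get
\begin{gather}
\lambda_k^2\,|v_k\cdot\nabla u(z)|^2 \le \lambda_k \int \big|(x-x_{cm})\cdot\nabla u(z)\big|^2\,d\mu(x)\, ,
\end{gather}
using $|(x-x_{cm})\cdot v_k|^2\le \lambda_k$-type bookkeeping more carefully — in fact the cleaner route is: by Cauchy–Schwarz, $\lambda_k^2 |v_k\cdot \nabla u(z)|^2 \le \big(\int |(x-x_{cm})\cdot v_k|^2 d\mu\big)\big(\int |(x-x_{cm})\cdot\nabla u(z)|^2 d\mu\big) = \lambda_k \int |(x-x_{cm})\cdot\nabla u(z)|^2 d\mu(x)$, hence $\lambda_k |v_k\cdot\nabla u(z)|^2 \le \int |(x-x_{cm})\cdot\nabla u(z)|^2 d\mu(x)$. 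Then integrate in $z$ over $A_{3,4}(0)$ and swap the order of integration (Fubini), reducing the claim to showing that for every $x\in\B 1 0$,
\begin{gather}
\int_{A_{3,4}(0)} |(x-z)\cdot\nabla u(z)|^2\,d\Vol(z) \le C(m)\,W_0(x)\, ,
\end{gather}
and similarly with $x$ replaced by $x_{cm}$ (note $x_{cm}\in\B 1 0$ as well). This last inequality is essentially the definition of $W_0(x) = W_{8\cdot 1,1}(x)=\int_{\B 8 x\setminus \B 1 x} |(z-x)\cdot\nabla u(z)|^2 |z-x|^{-m}\,d\Vol(z)$: on $A_{3,4}(0)$ with $x\in\B 1 0$ one has $2\le |z-x|\le 5$, so the annulus $A_{3,4}(0)$ is contained in $\B 8 x\setminus\B 1 x$ and the weight $|z-x|^{-m}$ is comparable to $1$ up to a dimensional constant; thus $\int_{A_{3,4}(0)}|(x-z)\cdot\nabla u(z)|^2 d\Vol(z) \le c(m) W_0(x)$.

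The main obstacle — really the only nontrivial point — is handling the cross term: after the decomposition $(x-x_{cm})\cdot\nabla u(z) = (x-z)\cdot\nabla u(z) - (x_{cm}-z)\cdot\nabla u(z)$ one gets $|(x-x_{cm})\cdot\nabla u(z)|^2 \le 2|(x-z)\cdot\nabla u(z)|^2 + 2|(x_{cm}-z)\cdot\nabla u(z)|^2$, and the second term must be bounded by $W_0(x_{cm})$, which in turn needs to be absorbed into $\int W_0(x)\,d\mu(x)$. Since $\mu$ is a probability measure, $W_0(x_{cm})$ is not literally $\le \int W_0\,d\mu$ (the map $x\mapsto W_0(x)$ is not obviously convex). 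The fix is to not introduce $x_{cm}$ at all in the cross estimate: instead bound $|(x_{cm}-z)\cdot\nabla u(z)|^2 = |\int (x-z)\cdot\nabla u(z)\,d\mu(x)|^2 \le \int |(x-z)\cdot\nabla u(z)|^2 d\mu(x)$ by Jensen, so that after integrating in $z$ and applying Fubini both terms reduce to $\int\big(\int_{A_{3,4}(0)} |(x-z)\cdot\nabla u(z)|^2 d\Vol(z)\big)d\mu(x) \le c(m)\int W_0(x)\,d\mu(x)$. Assembling these pieces gives the claimed bound with $C(m)$ dimensional. I would organize the write-up as: (1) the Cauchy–Schwarz reduction of $\lambda_k|v_k\cdot\nabla u(z)|^2$; (2) the $(x-z)$–$(x_{cm}-z)$ splitting with Jensen on the second factor; (3) Fubini; (4) the annulus comparison identifying the $z$-integral with $W_0(x)$ up to $c(m)$.
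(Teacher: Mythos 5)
Your argument is correct and follows essentially the same route as the paper: pair the eigenvector identity for $Q$ with $\nabla u(z)$, apply Cauchy--Schwarz using $\int |(x-x_{cm})\cdot v_k|^2\,d\mu = \lambda_k$, swap the order of integration, and compare $A_{3,4}(0)$ with $\B 8 x \setminus \B 1 x$ to produce $W_0(x)$. The one small difference is that the paper normalizes $x_{cm}=0$ and uses the vanishing first moment of $\mu$ to replace $\nabla u(z)\cdot x$ by $\nabla u(z)\cdot(x-z)$ \emph{exactly} inside the integral before Cauchy--Schwarz (no cross term at all), whereas you apply Cauchy--Schwarz first and then dispose of the $(x_{cm}-z)\cdot\nabla u(z)$ piece via the decomposition plus Jensen, costing only a harmless absolute constant.
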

\begin{proof}
For simplicity, and without essential loss of generality, we assume that $x_{cm}(\mu)= 0$ (otherwise a simple translation will do the trick). 

For any $z\in A_{3,4}$ and $k=1,\cdots,m$, we take the scalar product of \eqref{eq_Qv} with $\nabla u(z)$, and obtain
\begin{align}\label{eq_aaa}
\lambda_k\ton{ v_k\cdot \nabla u(z)}&= \int \ton{x\cdot v_k}\ton{\nabla u(z)\cdot x}\, d\mu(x) \, ,
\end{align}
By definition of center of mass (see \eqref{eq_cm}), we have for all fixed $z$:
\begin{gather}
\int x\cdot z \ d\mu(x) = x_{cm}\cdot z= 0\, . 
\end{gather}
Thus we can re-write \eqref{eq_aaa} in the form:
\begin{gather}
\lambda_k \ton{\nabla u(z)\cdot v_k}= \int \ton{x\cdot v_k}  \qua{\nabla u(z)\cdot(x-z)}\,d\mu(x) \, .
\end{gather}
A simple application of \hol inequality tells us that for all fixed $z$:
\begin{gather}
\lambda_k^2\abs{\nabla u(z) \cdot v_k}^2 \leq \lambda_k\int \abs{\nabla u(z)\cdot (x-z)}^2\,d\mu(x) \, .
\end{gather}
Note that we can evidently assume $\lambda_k>0$, otherwise there's nothing to prove. By integrating both sides of the previous inequality on $A_{3,4}(0)$, we get
\begin{gather}\label{e:best_subspace_estimate:1}
\lambda_k \int_{A_{3,4}(0)}\abs{\nabla u(z)\cdot v_k}^2\,d\Vol(z)\leq  \int\int_{A_{3,4}(0)} \abs{\nabla u(z) \cdot (x-z)}^2\,d\Vol (z)\,d\mu(x) \leq\\
\leq \int\int_{A_{3,4}(0)} \frac{\abs{\nabla u(z) \cdot (x-z)}^2}{\abs{x-z}^m} \abs{x-z}^m\,d\Vol (z)\,d\mu(x)\leq\\
\leq C(m) \int\int_{A_{1,8}(x)} \frac{\abs{\nabla u(z) \cdot (x-z)}^2}{\abs{x-z}^m} \,d\Vol (z)\,d\mu(x)\leq C(m) \int W_0(x)\ d\mu(x)\, ,
\end{gather}
as desired.
\end{proof}

\subsubsection{Proof of Theorem \ref{th_best_app}}

We are now in a position to prove Theorem \ref{th_best_app}. By rescaling \eqref{eq_D2W}, we can assume for convenience that $\mu(\B 1 0)=1$. Since we have ordered $\lambda_k$ to be decreasing in value, and by \eqref{eq_asdf}, we have
\begin{gather}\label{eq_thbest_aaa}
\beta^k_{2,\mu}(0,1)^2  = \lambda_{k+1}(\mu)+\cdots+\lambda_m(\mu)\leq (m-k)\lambda_{k+1}\, .
\end{gather}
By applying Proposition \ref{prop_best_V} to each $j=1,\cdots,k+1$, we obtain
\begin{gather}
\sum_{j=1}^{k+1} \lambda_j \int_{A_{3,4}(0)} \abs{\nabla u(z)\cdot v_j}^2\,d\Vol (z) \leq (k+1) C \int W_0(x)\, d\mu(x)\, .
\end{gather}
Let $V^{k+1}=\operatorname{span}\ton{v_1,\cdots,v_{k+1}}$ be the linear part of the best $k+1$-dimensional subspace of $\mu$. Given that $\lambda_j$ are decreasing in $j$ (by definition), the last estimate leads to
\begin{gather}
\lambda_{k+1}\int_{A_{3,4}(0)} \abs{V^{k+1}\cdot \nabla u(z)}^2\,d\Vol (z)= \lambda_{k+1} \sum_{j=1}^{k+1}\int_{A_{3,4}(p)} \abs{\nabla u(z)\cdot v_j}^2\,d\Vol (z)\leq C \int W_0(x)\, d\mu(x)\, .
\end{gather}

By assumption, we know that $\B 8 0$ is $(0,\delta_5)$-symmetric and not $(k+1,\epsilon)$-symmetric. Thus, by Lemma \ref{lemma_lower_bound}, 
\begin{gather}
\int_{A_{3,4}(p)} \abs{\nabla u(z)\cdot V^{k+1}}^2\,d\Vol (z)\geq 8^{m-2}\delta_5\, .
\end{gather}
This allows us to estimate 
\begin{gather}
c(m)\delta_5\lambda_{k+1}\leq \lambda_{k+1} \int_{A_{3,4}(0)} \abs{\nabla u(z)\cdot V^{k+1} }^2\,d\Vol (z)\leq C \int W_0(x)\, d\mu(x)\, .
\end{gather}
Since $\delta_5$ is a positive constant depending only on $(m,\KN,\Lambda,\gamma,\epsilon)$, and by \eqref{eq_thbest_aaa}, we can conclude
\begin{gather}
 \beta^k_{2,\mu}(0,1)^2 \leq C(m,\KN,\Lambda,\gamma,\epsilon) \int W_0(x)\, d\mu(x)
\end{gather}
as desired. 

\begin{flushright}
 $\square$
\end{flushright}

\subsection{Covering argument}\label{sec_cov}
In this subsection, we prove the inductive covering argument needed for the main theorem. We split this covering argument into two lemmas: in the first one, we keep refining inductively a covering by balls until all but a controlled amount of points in our balls have some definite drop in $\hat \theta$, and in the second one we show that this controlled amount of points without drop is small so that they can be ``ignored''.

\begin{lemma}[Covering Lemma I]\label{lemma_cover}
Let $u:\B 3 0\to N$ be an approximately harmonic map satisfying \eqref{eq_app1} and \eqref{eq_app_stat}, with the conditions \eqref{eq_f}.  Fix any $\epsilon>0$, $0<\rho<\rho(m)\leq 100^{-1}$, and $0<r<R, \ 0<R\leq 1$ arbitrary, set $E = \sup_{x\in B_{2R}(0)\cap \cS} \hat \theta_1(x)$, and assume the uniform bound $E\leq \Lambda$. There exists $\delta=\delta(m,\KN,\Lambda,\gamma,\rho,\epsilon)>0$ and $C_V(m)$ such that the following is true.

If $\cF<\delta$ then for any subset $\cS \subseteq \cS^k_{\epsilon,\delta r }$ there exists a finite covering of $\cS\cap \B R 0 $ such that 
\begin{gather}
 \cS\cap \B R 0 \subseteq \bigcup_{x\in \cC} \B {r_x}{x} \quad \text{ with } \ \ r_x\geq r \ \ \text{ and}\ \ \sum_{x\in \cC} r_x^k\leq C_V(m)R^k\, .
\end{gather}
Moreover, for each $x\in \cC$, one of the following is verified
\begin{enumerate}[i)]
 \item\label{it_a}  $r_x=r$
% \item \label{it_b} $\sup_{y\in B_{r_i}(x_i)\cap S^k_{\epsilon,r}}\theta_{r_i}(y)\leq E- \eta$
 \item\label{it_c}  the set of points $F_x\equiv\cur{y\in \cS\cap \B {2r_x}{x} \ \ s.t. \ \ \theta(y,\rho r_x/10)>E-\delta }$
% \begin{gather}
%  F_i=\cur{y\in \B {r_i}{x_i} \ \ s.t. \ \ \theta(y,\rho r_i)>E-\delta }
% \end{gather}
is contained in $\B {\rho r_x/5}{L_x}\cap \B {2r_x}{x}$, where $L_x$ is some $k-1$ dimensional affine subspace.
\end{enumerate}
\end{lemma}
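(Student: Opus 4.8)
The plan is to build the covering $\cC$ by a stopping-time / inductive refinement procedure starting from the single ball $\B R 0$. At each stage of the induction we have a covering of $\cS\cap\B R 0$ by balls $\{\B{r_x}{x}\}$ with $r_x\ge r$ and $\sum r_x^k\lesssim R^k$, and we try to process each ball that does not already satisfy \eqref{it_a} or \eqref{it_c}. Fix such a ball $\B{r_x}{x}$ with $r_x>r$, and look at its "pinched set" $F_x=\{y\in\cS\cap\B{2r_x}{x}\ :\ \theta(y,\rho r_x/10)>E-\delta\}$. There are two cases. If $F_x$ fails to $\rho$-effectively span a $k$-dimensional affine subspace, then by definition $F_x$ is contained in a $(\rho r_x/5)$-neighborhood of some $(k-1)$-dimensional affine subspace $L_x$, so $\B{r_x}{x}$ already satisfies \eqref{it_c} and we stop refining it. If instead $F_x$ does $\rho$-effectively span a $k$-dimensional affine subspace $V_x$, then we invoke the higher-order symmetry machinery: by Lemma \ref{lemma_unipinch} (applied at the appropriate scale, with $E$ as the energy level and $\eta$ chosen in terms of the pinching $\delta$ and the $\epsilon$-regularity constants) the pinching propagates to all of $V_x$, and then by Proposition \ref{prop_k+1_pinch} we get $\cS^k_{\epsilon,\delta_3 r_x}\cap\B{2r_x}{x}\subseteq\B{2\rho r_x}{V_x}$, i.e. the remaining singular points in this ball are trapped in a thin tube around a $k$-plane. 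In the tube we cover by $\sim(r_x/(\rho r_x))^{k}=\rho^{-k}$ balls of radius $\rho r_x$ (using a Vitali/Besicovitch-type covering of the tube, whose $k$-content is $\lesssim \rho^k\cdot\#$ of $\rho r_x$-balls), so the $k$-content is multiplied by a factor $\lesssim c(m)$ independent of $\rho$ at each refinement step of this type; this is exactly where the content estimate is kept under control.

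The subtlety — and what makes this more than a naive iteration — is to make sure the induction terminates with a uniform content bound. The key observation is that each "spanning" refinement of a ball $\B{r_x}{x}$ produces children of radius $\rho r_x$, and crucially the children's pinched sets are \emph{more} pinched (the energy has dropped, or rather $\hat\theta$ has dropped along the way by the monotonicity \eqref{eq_hat'}, since a genuine $k$-dimensional spread of pinched points forces invariance and hence a quantitative drop in $\hat\theta$ at the children's scale unless we are in case \eqref{it_c}). So the energy level $E$ relevant to the children is strictly smaller, and since $\hat\theta$ is bounded below by $0$ and bounded above by $\Lambda$, after at most $\sim\Lambda/\delta$ rounds of spanning refinement the energy would have to become negative — impossible. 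Hence every branch of the refinement tree terminates, either by hitting scale $r$ (case \eqref{it_a}), or by landing in a non-spanning ball (case \eqref{it_c}). At termination, the accumulated content bound is $\sum_{x\in\cC}r_x^k\le C_V(m)^{\Lambda/\delta}\cdot(\text{bounded number of factors})\cdot R^k$; to get the clean bound $C_V(m)R^k$ one reorganizes the count so that the number of "bad" refinement rounds along any branch is bounded \emph{independently of $\delta$} — this is achieved by choosing $\rho=\rho(m)$ first and $\delta=\delta(m,\KN,\Lambda,\gamma,\rho,\epsilon)$ last, and by noting that each spanning step multiplies the content by a dimensional constant while the depth of such steps is controlled by the total energy drop $\Lambda$ divided by the fixed per-step drop (which depends only on $\rho$, not $\delta$, because the drop comes from the rigidity in Lemma \ref{lemma_unipinch} with $\eta$ fixed in terms of $\rho$).

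Concretely the steps I would carry out, in order, are: (1) set up the refinement tree with root $\B R 0$; (2) at a ball at scale $r_x>r$, compute $F_x$ and split into the spanning / non-spanning dichotomy via the $\rho$-effective span definition; (3) in the non-spanning case record $L_x$ and stop (case \eqref{it_c}); (4) in the spanning case apply Lemma \ref{lemma_unipinch} to propagate pinching to $V_x$ and Proposition \ref{prop_k+1_pinch} to confine the remaining singular set to $\B{2\rho r_x}{V_x}$, then cover the tube by $\lesssim c(m)\rho^{-k}$ balls of radius $\rho r_x$ so that $\sum_{\text{children}}(\rho r_x)^k\le c(m)r_x^k$; (5) show each spanning step forces a drop $\hat\theta$ by at least $\delta'(m,\rho,\dots)$ at the children's scale, using \eqref{eq_hat'} and the rigidity of Lemma \ref{lemma_unipinch}, hence bound the number of spanning steps along any branch by $\Lambda/\delta'$; (6) conclude termination, and sum the content bound across the (finitely many levels of the) tree to get $\sum_{x\in\cC}r_x^k\le C_V(m)R^k$; (7) verify that every leaf satisfies \eqref{it_a} or \eqref{it_c}. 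The main obstacle I anticipate is step (5)–(6): extracting a \emph{uniform} (in $\delta$) per-step energy drop so that the multiplicative constant does not blow up — the whole point of routing the argument through Lemma \ref{lemma_unipinch} with a fixed $\eta=\eta(m,\rho)$ rather than through $\delta$ directly, and of fixing $\rho$ before $\delta$. A secondary technical point is handling the scale mismatch between $\rho r_x/10$ (the scale in $F_x$), $\rho r_x/5$ (the tube thickness in \eqref{it_c}) and $2r_x$ (the outer scale), which is purely a matter of bookkeeping with the monotonicity estimates \eqref{eq_hatbounds} and \eqref{eq_hat'}.
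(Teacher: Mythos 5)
The overall scaffold you describe (good/bad dichotomy via the $\rho$-effective span of the pinched set $F_x$, use of Lemma \ref{lemma_unipinch} to propagate pinching to the $k$-plane, Proposition \ref{prop_k+1_pinch} to confine the singular set to a thin tube, and covering the tube by $\rho r_x$-balls) matches the paper's construction. The genuine gap is in steps (5)--(6), which is exactly the point you flagged as the subtle one: you try to bound the number of ``spanning'' refinement rounds along any branch by extracting a uniform per-step drop of $\hat\theta$ at the children's scale, but \emph{no such drop exists at good balls}. Quite the opposite: after the refinement, the new ball centers are placed on $V_x$ precisely so that, by Lemma \ref{lemma_unipinch}, they satisfy $\hat\theta(y,\rho r_x/10)\ge E-\eta$; the whole point of the construction is that good balls remain energy-pinched so that the $(0,\delta_5)$-symmetry and non-$(k+1,\epsilon)$-symmetry hypotheses of Theorem \ref{th_best_app} stay valid at every scale. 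Consider a model singularity that is exactly invariant along a $k$-plane: the pinched set spans a $k$-plane at every scale, $\hat\theta$ is constant along the plane, and your refinement tree has depth $\sim\log_\rho(R/r)$ with no energy drop whatsoever. Since each spanning step multiplies the $k$-content by a dimensional constant $c(m)>1$, your bookkeeping gives $\sum r_x^k\lesssim c(m)^{\log_\rho(R/r)}R^k$, which is a positive power of $R/r$ and blows up as $r\to 0$.

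What actually controls the good-ball content in the paper is the discrete Reifenberg Theorem~\ref{th_disc_reif} applied to the measure $\mu=\sum_{x\in\cC}\omega_k r_x^k\delta_x$, with the hypothesis \eqref{eq_reif_ass} verified via the $L^2$ best-subspace estimate of Theorem~\ref{th_best_app}: the uniform pinching $\hat\theta(x,s)-\hat\theta(x,s/2)<\eta$ on all centers and scales, together with non-$(k+1,\epsilon/2)$-symmetry from Lemma~\ref{lemma_ksym_pinch}, yields $\beta^k_{2,\mu}(y,s)^2\lesssim s^{-k}\int W_s\,d\mu$, and integrating $W_s$ over $ds/s$ is bounded by the total $\hat\theta$-drop, which is at most $\eta$. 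Choosing $\eta$ small compared to $\delta_R^2/(C_1C_R^2)$ makes the Reifenberg hypothesis hold and gives $\sum r_x^k\le C_R(m)$ at \emph{every} scale simultaneously, via the upward induction in Section~\ref{sec_volest}. Without this $\beta_2$-Reifenberg mechanism there is no way to beat the $c(m)^{\text{depth}}$ blow-up. The per-step energy drop you invoke \emph{is} a real mechanism in this paper, but it belongs to the \emph{second} covering lemma and to the induction on energy upper bounds in the proof of Theorem~\ref{th_mink}, where it is applied to \emph{bad} balls (which do exhibit a drop by definition of case~\eqref{it_c}), not to the good-ball refinement inside Covering Lemma I.
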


\begin{remark}
 By the scale-invariance properties of $\hat \theta$, it is clear that for simplicity we can assume wlog that $R=1$.
\end{remark}

\begin{remark}
 Note that the set $F_x$ may be empty. 
\end{remark}

%\begin{remark}
% Note that the parameter $0<\rho<<1$ in this lemma is considered to be arbitrary, however in applications we will fix its value as a constant depending only on $m$ in the next lemma (see \eqref{eq_rho}). %\textcolor{red}{It seems like in the proof you just assume $\rho<\rho(m)$ right away, maybe better to just state this as an assumption on $\rho$?}
%\end{remark}

\begin{remark}
 For convenience, and without any loss of generality, we will assume in the proof that $r$ is some (positive) power of $\rho$, and that $\rho$ is some (negative) power of $2$. In particular:
 \begin{gather}\label{eq_convenience}
  r=\rho^{\bar j}\, \quad \text{ and } \quad \rho=2^{-a}\, , \quad \text{ with } \ \ a,\bar j \in \N\, .
 \end{gather}

\end{remark}

\subsubsection{Proof of Lemma \ref{lemma_cover}}
 The idea of the proof is the following. We are going to build inductively on $i$ a covering of the set $\cS^{k}_{\epsilon,r}$ by a family of balls of radius $r_i=\rho^{i}$. In the inductive step, we will look at each ball of radius $r_i$ and determine if this is a ``good'' or a ``bad'' ball according to how many points inside this ball have $\hat \theta(y,\rho r_i/10)\geq E-\delta$. 
 
 If this set of points ``effectively span'' some $k$-dimensional affine subspace $V$, then we will apply Lemma \ref{prop_k+1_pinch} in order to see that the whole set $\cS^k_{\epsilon,r}\cap \B {r_i}{x}$ is contained in small neighborhood of $V$. Moreover, using Lemma \ref{lemma_unipinch}, we will see that we can cover the whole neighborhood of $V$ by balls with uniform radius, and this covering will satisfy the assumptions of the discrete Reifenberg theorem. These balls are the good balls.
 
 If this set of points is empty, or it does not ``effectively span'' something $k$-dimensional, then we will stop refining our covering, because by definition condition \eqref{it_c} is verified.

 The uniform $k$-dimensional content estimates will follow from the discrete Reifenberg theorem \ref{th_disc_reif} applied to the natural measure associated with this covering. The $\beta_2$ estimates needed to apply the Reifenberg theorem are a consequence of Section \ref{sec_dist_est}.

 \subsubsection{Inductive covering: first step} Consider the map $u:\B 3 0\to N$, let $\cS\subseteq \cS^k_{\epsilon,\delta r}$ be an arbitrary subset and define the set
 \begin{gather}
  F=\cur{y\in \B 2 0 \cap \cS \ \ s.t. \ \ \hat \theta(y,\rho/10)> E-\delta}\, .
 \end{gather}
 If there exists a $k-1$-dimensional subspace $L$ such that $F\subset \B {\rho/5}{L}$, then there's nothing to prove. In this case, we call $\B 1 0$ a \textit{bad} ball.

 Otherwise, we say that $\B 1 0$ is a \textit{good} ball. In this second case, let $V$ be a $k$-dimensional subspace which is $(\rho/10)$-effectively spanned by the set $F$. Thus by definition there exists $\cur{y_j}_{j=0}^k\subset F$ that $(\rho/10)$-effectively span $V$. For $\delta$ sufficiently small, we can apply Lemma \ref{prop_k+1_pinch} to $\B 1 0$, we obtain that
 \begin{gather}
  \cS^k_{\epsilon,\delta r}\cap \B 1 0  \subset \B {\rho/5}{V}\, .
 \end{gather}
Consider a finite covering of $\B {\rho/5}{V}\cap B_1$ by balls $\cur{\B{\rho}{x}}_{x\in \cC}$ such that
\begin{enumerate}
 \item $x\in V\cap \B 1 0$
 \item if $x\neq y$, then $\B {\rho/5}{x}\cap \B {\rho/5}{y}=\emptyset$
\end{enumerate}
Note that, by Lemma \ref{lemma_unipinch}, we have for all $x\in \cC$:
\begin{gather}
 \hat \theta(x,\rho/10)\geq E-\eta\, ,
\end{gather}
as long as $\delta$ is sufficiently small. Under the same smallness assumption, Lemma \ref{lemma_ksym_pinch} implies that for all $x$ we have $x\in \cS^k_{\rho,\epsilon/2}$. We will need these two properties later on in order to apply the discrete Reifenberg Theorem \ref{th_disc_reif} to the measure associated to our final covering.

This completes the base step of the inductive covering we will be constructing in the next subsection. Now we will consider any of the balls $\B {\rho}{x}$ in this covering and start over the process.

\subsubsection{Inductive step}

We will build by induction a sequence of coverings 
\begin{align}
\cS\subseteq \bigcup_{x\in \cC^j} B_{r^j_x}(x)= \bigcup_{x\in \cC^j_b} B_{r^j_x}(x)\cup \bigcup_{x\in\cC^j_g} B_{r^j_x}(x) \equiv \B{r^j_x}{\cC^j_b}\cup \B{r^j_x}{\cC^j_g}	\, ,
\end{align}
where $\cC^j_b$ will represent the centers of a collection of ``bad balls'' and $\cC^j_g$ will represent the centers of a collection of ``good balls'' such that
\begin{enumerate}
 \item If $x\in \cC_b^j$ then $r^j_x\geq \rho^j$ and the set $F_x= \cur{y\in \cS \cap \B {2r_x^j}{x} \ \ s.t. \ \ \hat \theta(y,\rho r^j_x/10)\geq E-\delta}$ is contained in some $\B {\rho r_x^j/5}{L_x}$, where $L_x$ is a $k-1$-dimensional affine subspace.
 \item If $x\in \cC_g^j$ then $r^j_x\equiv \rho^j$ and the set $F_x=\cur{y\in \cS \cap \B {2r^j_x}{x} \ \ s.t. \ \ \hat \theta(y,\rho r^j_x/10)\geq E-\delta}$ $(\rho r_x^j/10)$-effectively spans a $k$-dimensional affine subspace $V_x$.
 %\item $\cS\subset \bigcup_{x\in \cC_b^j\cup \cC_g^j}\B {r_x^j}{x}$.
 %\begin{gather}
 % \cS\subset \bigcup_{i\in I_b^j\cup I_g^j}\B {r_i}{x_i} \, .
 %\end{gather}
 \item For all $x\neq y\in \cC^j$ we have $\B {r_x/5}{x} \cap \B {r_y/5}{y}=\emptyset$.
 \item For all $x\in \cC^j$ we have $\hat \theta(x,r_x)\geq E-\eta$.
% \begin{gather}\label{eq_pinch}
%\hat \theta(x_i,r_i/10)\geq E-\eta\, ,
% \end{gather}
 \item For all $x\in \cC^j$ and for all $s\in [r_x,1]$, $\B s {x}$ is not $(k+1,\epsilon/2)$-symmetric.
\end{enumerate}
Suppose that we have this covering for some $j$, and consider the set
\begin{gather}\label{eq_cov1}
 R_j=\cS \setminus \bigcup_{x\in \cC_b^j}\B {r_x}{x} = \cS\setminus  \B {r_x}{\cC_b^j}\, .
\end{gather}
Note that by definition this set is contained in $ \B {\rho^j}{\cC_g^j}$.  For each $x\in \cC_g^j$, we know that $F_x$ $[\rho^{j+1}/10]$-effectively spans a $k$-dimensional subspace $V_x$. As seen in the first inductive step, by Proposition \ref{prop_k+1_pinch} we have that
 \begin{gather}\label{eq_cov2}
  \cS^k_{\epsilon,\delta r}\cap \B{2\rho^j}{x} \subset \B {\rho^{j+1}/5}{V_x}\, 
 \end{gather}
for all $x\in \cC_g^j$ as long as
\begin{gather}\label{eq_da}
 \delta\leq \delta_3 (m,\Lambda,\KN,\gamma,\rho,\epsilon)\, .
\end{gather}
In order to build an open covering of $R_j$, consider the set
\begin{gather}
 A=\bigcup_{x \in \cC^j_g} \ton{\B {\rho^j}{x} \cap V_x} \setminus \B {r_x/2}{\cC_b^j}\, .
\end{gather}
By \eqref{eq_cov1} and \eqref{eq_cov2}, and since $\rho < 100^{-1}$, we have 
\begin{gather}
 R_j \subseteq \B{\rho^{j+1}/5 }{A}\, .
\end{gather}
Now first note that by the definition of $A$ and since $\rho \leq 100^{-1}$, all of these balls are disjoint from $\B {r_x/10}{\cC^j_b}$. Moreover, by Lemma \ref{lemma_unipinch}, if we choose $\delta$ sufficiently small, for all $y\in A$ we have
\begin{gather}\label{eq_db}
 \hat \theta\ton{y,\rho^{j+1}/10}\geq E-\eta\, .
\end{gather}
In particular, we need
\begin{gather}
 0<\delta\leq\delta_6(m,\Lambda,\KN,\rho,\gamma,\eta)\, .
\end{gather}
Furthermore, if we choose $\delta$ small enough, by Lemma \ref{lemma_ksym_pinch}, we obtain that for all $s\in [\rho,1]$ and for all $y\in A$
\begin{gather}\label{eq_notk+1}
\B s y \ \ \text{is not}\ \ (k+1,\epsilon/2)\text{-symmetric}\, .
 \end{gather}
In particular, we need
\begin{gather}\label{eq_eta_last}
 0<\eta\leq \delta_7(m,\Lambda,\KN,\rho,\gamma,\epsilon)\,  \quad \Longleftarrow \quad 0<\delta \leq \min\cur{\delta_7(m,\Lambda,\KN,\rho,\gamma,\epsilon), \delta_6(m,\Lambda,\KN,\rho,\gamma,\delta_7)}\, .
\end{gather}

Now consider a (finite) Vitali subcovering of this set given by
\begin{gather}\label{eq_cov3}
 R_j \subseteq \bigcup_{x\in \cC^A} \B{\rho^{j+1} }{x}\, .
\end{gather}
We can classify all the balls in this covering into good and bad according to how spread their set $F$ is. In particular, for all $x\in\cC^A$ consider as above the set
\begin{gather}\label{eq_cov4}
 F_x= \cur{y\in \cS \cap \B {2\rho^{j+1} }{x} \ \ s.t. \ \ \hat \theta\ton{y,\rho^{j+2}/10 }\geq E-\delta}\, .
\end{gather}
If $F_x$ $[\rho^{j+2}/10]$-effectively spans a $k$ dimensional subspace $V_x$, then we say that $\B{\rho^{j+1} }{x_t}$ is a good ball, and we put $x\in \cC^A_g$. Otherwise, we say that $\B{\rho^{j+1} }{x}$ is a bad ball, and we put $x\in \cC^A_b$.

We define
\begin{gather}
 \cC_b^{j+1}=\cC_b^j \cup \cC^A_b \, , \quad \cC_g^{j+1}= \cC^A_g\, .
\end{gather}
Note that the set of bad balls contains \textit{all} the bad balls encountered at any previous step. On the contrary, good balls get refined at each stage, and at each induction step the previous bad balls disappear from the set $\cC_g$.

Now the induction is complete. Indeed, property $1$ and $2$ are a direct consequence of the definition of $\cC_g$ and $\cC_b$.  Property $3$ comes from the definition of $A$ and the Vitali covering lemma. Finally, property $4$ is a consequence of \eqref{eq_db} and property $5$ comes from \eqref{eq_notk+1}. 

\subsubsection{Volume estimates}\label{sec_volest}
Now we are in a position to prove the desired volume estimates, and in particular
\begin{gather}\label{eq_obj}
 \sum_{x\in \cC} r_x^k\leq C_V(m)\, ,
\end{gather}
where $\cC=\cC^{\bar j}$ for $\bar j$ such that $\rho^{\bar j}=r$.

We will prove this estimate by an induction on the radius. For convenience, we define the measure 
\begin{gather}\label{eq_deph_mu}
 \mu=\omega_k \sum_{x\in \cC} r_x^k \delta_{x} \, .
\end{gather}

\paragraph{Upwards induction}
%\textcolor{red}{Reminder:  Maybe use a different symbol than $s$ here, its used in the proof later and it causes a little confusion}
For all $t\in (0,1]$, set $\cC_t = \cur{x \in \cC \ \ s.t. \ \ r_x\leq t}$, and define the measure
\begin{gather}
 \mu_t \equiv \omega_k \sum_{x\in \cC_t} r_x^k \delta_{x}  \leq \mu\, .
\end{gather}
Now we want to prove inductively on $t=r,2r,2^2r,2^3r,\cdots,1/8$ that for some universal constant $C_R(m)$, for all $x\in \B 3 0$ and $s\geq r$ we have
\begin{gather}\label{eq_up_ind}
 \mu_t(\B t x) \equiv \ton{\sum_{x\in \cC \ s.t. \ r_x\leq t} \omega_k r_x^k \delta_{x} }(\B t x) \leq C_R(m) t^k\, .
\end{gather}
Note that $C_R(m)$ is the constant in Theorem \ref{th_disc_reif}. Note also that $\mu_1=\mu$, so at the last step of the induction we will have recovered an estimate for the whole $\mu$, up to a covering of $\B 1 0$ by balls $\B{1/8}{p_i}$. In other words, we prove \eqref{eq_obj} with 
\begin{gather}\label{eq_CV}
C_V(m)=c(m) C_R(m)\, .
\end{gather}

Note that the base step is easily seen to be true for $t=r$. Indeed, at this stage we have
\begin{gather}
 \mu_r = \sum_{x\in \cC_r } \omega_k r^k \delta_{x}\, ,
\end{gather}
where all $\B {r/5} {x_i}$ are disjoint. Thus we immediately have $\mu_r(\B r x)\leq c(m)r^k$.

\vspace{3mm}

Now, suppose that we have proven \eqref{eq_up_ind} for $t\leq 2^j r$, we will show that \eqref{eq_up_ind} holds also for $t=2^{j+1}r$.

\paragraph{Rough estimate} First of all, we note that by a very bad estimate we have for all $x\in \B 1 0$:
\begin{gather}\label{eq_rough}
 \mu_{2\bar r}(\B {2\bar r} {x}) \leq c(m) C_R(m) (2\bar r)^k\, ,
\end{gather}
where for convenience we have set $\bar r=2^j r$.  Indeed, we can split $\mu_{2\bar r}$ into
\begin{gather}
\mu_{2\bar r}= \mu_{\bar r} + \tilde \mu_{2\bar r}\equiv \sum_{x\in \cC_{\bar r}} \omega_k r_x^k\delta_x + \sum_{x\in \cC \ s.t. \ r_x\in (\bar r,2\bar r]} \omega_k r_x^k\delta_x \, .
\end{gather}
Take a covering of $\B {2\bar r}{x}$ by balls $\B {\bar r}{y_i}$ such that $\B {\bar r/2}{y_i}$ are disjoint. The number of these balls has a universal bound $c(m)$, and by induction we have
\begin{gather}
 \mu_{\bar r} (\B {2\bar r}{x})\leq \sum_i \mu_{\bar r} (\B {\bar r}{y_i}) \leq c(m) C_R(m) \bar r^k\, .
\end{gather}
As for the other part of $\mu$, by definition of this measure all the balls $\B {r_x/5}{x}$ are pairwise disjoint, and so we get immediately
\begin{gather}
 \tilde \mu_{2\bar r}(\B {2\bar r}{x})\leq c(m) (2\bar r)^k\, .
\end{gather}

\paragraph{Reifenberg estimates}
We will show inductively that we can apply Theorem \ref{th_disc_reif} to the measures $\mu_{2\bar r}$ on each fixed $\B {2\bar r}{x}$. For convenience, we set \begin{gather}
\bar \mu = \mu_{2\bar r}|_{\B {2\bar r}{x}}\, .                                                                                                                  \end{gather}

Note that for all $x\in \supp \mu$, and all $s\in [r_x,1]$, we have $\hat \theta(x,s)-\hat \theta(x,s/2)<\eta$ because $\hat \theta(x_i,s)\leq E$ by monotonicity of $\hat \theta$ and by definition of $E$, and $\hat \theta(x_i,s/2)\geq E-\eta$ by condition (4) of our constructed covering.  Now we can choose $\eta$ small enough so that for all $x\in \supp \mu$ and $0<s\leq 1$ we have the $\beta_2$ estimate
\begin{gather}\label{eq_vol1}
 \beta_{2,\bar\mu}(x,s)^2  \stackrel{thm. \ref{th_best_app}}{\leq} C_1 s^{-k}\int_{\B {s}{x}} \hat W_s (y)\,d\bar \mu(y)\, ,
\end{gather}
where we have set for all $x\in \supp \mu$:
\begin{gather}
 \hat W_s(x) = \begin{cases}
                W_s(x_i) & \text{ if } s> r_x\, ,\\
                0 & \text{ if } s\leq r_x\, .
               \end{cases}
\end{gather}

%\textcolor{red}{The mixture of s from the induction and s from radius confuses me in these sentences.  Basically, I assume you are just saying the above is a reasonable definition (I agree with that)}
Indeed, for $s\leq r_x$, $\supp \mu \cap \B {s}{x}=\cur{x}$, and there's nothing to prove. If $s\geq r_x$, then for all $y\in \B {r_x}{x}$, $r_y< s$ by construction of $\mu$.

Now for $r_x\leq s \leq 1/8$, the ball $\B {8s} {x}$ is not $(k+1,\epsilon/2)$-symmetric by \eqref{eq_notk+1}. Let $\delta_5(m,\KN,\Lambda,\gamma,\epsilon)$ be the parameter found in Lemma \ref{lemma_lower_bound} and Theorem \ref{th_best_app}. By Proposition \ref{prop_1pinch}, we can choose a threshold
\begin{gather}\label{eq_eta0}
 \eta_0(m,\KN,\Lambda,\gamma,\epsilon)= \delta_1(m,\KN,\Lambda,\gamma,\epsilon,\delta_5)>0
\end{gather}
such that $\hat \theta(x,8s)-\hat\theta(x,4s)<\eta$ with $\eta\leq \eta_0$ implies that $\B {8s} {x_i}$ is $(0,\delta_5)$-symmetric. Thus all the assumptions of Theorem \ref{th_best_app} are satisfied, and we have the estimate \eqref{eq_vol1} as desired.

Now we can prove that for all $y\in \B {2\bar r}{x}$, and $r\leq 2\bar r$, we have
\begin{gather}\label{eq_vol_reif}
 \int_{B_r(y)}\ton{\int_0^r \beta^k_{2,\bar\mu}(z,s)^2 \,{\frac{ds}{s}}}\, d\bar \mu(z)<c(m) C_1 C_R^2 \eta r^{k}\, .
\end{gather}
Indeed, by \eqref{eq_vol1} we can estimate for all $s\leq r$:
\begin{gather}
 \int_{B_r(y)} \beta^k_{2,\bar\mu}(z,s)^2 \, d\bar \mu(z)\leq C_1 s^{-k}\int_{\B r y} \qua{\int_{\B {s}{z}} \hat W_s (t)\, d\bar \mu(t)} d\bar \mu (z)\, .
\end{gather}
Now, on $\B s z$, either $\bar \mu = \mu_s|_{\B {2\bar r}{x}} $, or there exists an $x\in \supp \mu \cap \B s z$ with $r_x>s$. Since $z\in \supp \mu$ as well, by construction we have $z=x=\supp \mu \cap \B s z$, and $\hat W_s(z)=0$. Thus in either case we have
\begin{gather}
 \int_{B_r(y)} \beta^k_{2,\bar\mu}(z,s)^2\, d\bar \mu(z)\leq C_1 s^{-k}\int_{\B r y\cap \B {2\bar r}{x}} \qua{\int_{\B {s}{z}\cap\B {2\bar r}{x}} \hat W_s (t)\, d\mu_s(t)} d\mu_s (z)\, .
\end{gather}
By induction, and by the rough estimates in \eqref{eq_rough}, for all $s\in (0,2 \bar r]$ and $z\in \B 1 0$ we can estimate
\begin{gather}
 \mu_s(\B s z)\leq c(m) C_R s^k\, .
\end{gather}
Thus we obtain
\begin{gather}
 \int_{B_r(y)} \beta^k_{2,\bar\mu}(z,s)^2\, d\bar \mu(z)\leq c(m)C_1C_R \int_{\B {r+s} y \cap \B {2\bar r}{x}} \hat W_s (z)d\mu_s (z)=c(m)C_1C_R \int_{\B {r+s} y} \hat W_s (z)d\bar \mu (z)\, .
\end{gather}
This yields
\begin{gather}
  \int_{\B r y}\ton{\int_0^r \beta^k_{2,\bar\mu}(z,s)^2\,{\frac{ds}{s}}}\, d\bar \mu(z)%\leq c(m) C_1 C_R \int_0^r \qua{\int_{\B {r+s} y} \hat W_s (z) d\bar \mu (z)}\frac{ds}{s}
  \leq c(m) C_1 C_R \int_{\B {2r} y} \qua{\int_0^r \hat W_s (z) \frac{ds}{s}}d\bar \mu (z)\, .
\end{gather}
Note that for all $x \in \supp \mu$ and $r\leq 2\bar r\leq 1/8$, we have 
\begin{gather}
 \int_0^r \hat W_s (x) \frac{ds}{s}=\int_{r_x}^r \hat W_s (x) \frac{ds}{s}\leq \int_{r_x}^{1/8} \hat W_s (x) \frac{ds}{s}\stackrel{\eqref{eq_hat'}}{\leq} c \qua{\hat \theta(x,1)-\hat \theta(x,r_x)}\leq c\eta\, .
\end{gather}
Thus, using again the induction hypothesis and the rough estimates \eqref{eq_rough}, we prove \eqref{eq_vol_reif}.

If we choose $\eta$ small enough, in particular
\begin{gather}\label{eq_eta1}
 \eta \leq \eta_1(m,\KN,\Lambda,\gamma,\epsilon)=c(m)\frac{\delta_R^2}{C_1C_R^2}\, ,
\end{gather}
we can apply Theorem \ref{th_disc_reif} to $\bar \mu$ and obtain \eqref{eq_up_ind} as wanted.

\vspace{2mm}

The only thing left to do is to choose $\delta=\delta(m,\KN,\Lambda,\gamma,\rho,\epsilon)>0$ in such a way that \eqref{eq_db} is satisfied with
\begin{gather}\label{eq_eta}
 \eta \leq \min\cur{\eta_0,\eta_1,\delta_7}\, 
\end{gather}
and also \eqref{eq_da} is satisfied. Given \eqref{eq_eta0} and \eqref{eq_eta1}, as noted above this is a simple application of Lemmas \ref{lemma_unipinch} and \ref{lemma_ksym_pinch}.  This finishes the proof of Lemma \ref{lemma_cover}.

\begin{flushright}
 $\square$
\end{flushright}

\subsubsection{Second covering lemma}
By repeating this covering argument over bad balls, we obtain the following
\begin{lemma}[Covering Lemma II]\label{lemma_coverII}
Let $u:\B 3 0\to N$ be an approximately harmonic map satisfying \eqref{eq_app1} and \eqref{eq_app_stat}, with the conditions \eqref{eq_f}.  Fix any $\epsilon>0$ and $0<r\leq R\, , \ 0<R\leq 1$, set $E = \sup_{x\in \B {2R} 0 \cap \cS} \hat \theta_1(x)$, and assume the uniform bound $E\leq \Lambda$. There exists $\delta=\delta(m,\KN,\Lambda,\gamma,\epsilon)>0$ and $C_F(m)$ such that the following is true.

If $\cF<\delta$, for any subset $\cS \subseteq \cS^k_{\epsilon,\delta r}$, there exists a finite covering of $\cS\cap \B R 0 $ such that 
\begin{gather}
 \cS\cap \B R 0 \subseteq \bigcup_{x\in \cC} \B {r_x}{x}\, , \quad \text{ with } \ \ r_x\geq r \ \ \text{ and}\ \ \sum_{x\in \cC} r_x^k\leq C_F(m) R^k\, .
\end{gather}
Moreover, for each $x\in \cC$,
\begin{enumerate}[i)]
 \item \label{it_rec_1} either $r_x= r$
 \item \label{it_rec_2} or we have the following uniform energy drop
  \begin{gather}\label{eq_Edrop}
   \forall y\in B_{r_x}(x) \cap \cS\, , \ \ \hat \theta(y,r_x/10) \leq E- \delta\, .
  \end{gather}
\end{enumerate}
\end{lemma}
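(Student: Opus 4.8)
The plan is to iterate Covering Lemma~I (Lemma~\ref{lemma_cover}) over its bad balls. Fix once and for all $\rho=\rho(m)\le 100^{-1}$, below the threshold required by Lemma~\ref{lemma_cover} and so small that $c(m)C_V(m)\rho\le \frac12$, where $C_V(m)$ is the constant of Lemma~\ref{lemma_cover} and $c(m)$ is the dimensional constant appearing below; with $\rho$ fixed in terms of $m$ alone, the threshold produced by Lemma~\ref{lemma_cover} depends only on $m,\KN,\Lambda,\gamma,\epsilon$, and we take $\delta$ to be this value. Since $\cF<\delta$, Lemma~\ref{lemma_cover} applies to $\cS\cap\B R0$ and, after the usual rescaling, to $\cS$ intersected with any sub-ball: the hypotheses $\cS\subseteq\cS^k_{\epsilon,\delta r}$, $E\le\Lambda$, and \eqref{eq_f} are scale invariant or only improve under blow-up, and $\hat\theta(\cdot,s)\le\hat\theta(\cdot,1)\le E$ by monotonicity. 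We maintain a finite family of \emph{active} balls (to be processed) and a finite family of \emph{retired} balls, each of the latter already satisfying one of the two alternatives of the statement relative to the fixed $E$. We start, wlog with $R=1$, by applying Lemma~\ref{lemma_cover} to $\cS\cap\B R0$: its radius-$r$ balls are retired under the first alternative, and its bad balls (those of radius $>r$ for which the pinched set lies near a $(k-1)$-plane) become the first generation of active balls; their total $k$-content is at most $C_V(m)R^k$.

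\textit{Processing step.} Let $\B sz$ be an active ball; by the construction its radius $s$ is a power of $\rho$ with $s>r$, hence $s\ge r/\rho$. Apply Lemma~\ref{lemma_cover} to $\cS\cap\B sz$: retire the radius-$r$ balls (total content $\le C_V(m)s^k$); and for each bad ball $\B{r_w}w$ — so $r_w>r$ (hence $\rho r_w\ge r$) and the pinched set $F_w=\{y\in\cS\cap\B{2r_w}w:\hat\theta(y,\rho r_w/10)\ge E-\delta\}$ lies in $\B{\rho r_w/5}{L_w}$ for a $(k-1)$-dimensional affine $L_w$ — choose a Vitali cover $\cS\cap\B{r_w}w\subseteq\bigcup_i\B{\rho r_w}{y_i}$ with $y_i\in\cS\cap\B{r_w}w$ and the balls $\B{\rho r_w/5}{y_i}$ pairwise disjoint. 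Call $y_i$ \emph{near} if $d(y_i,L_w)\le 2\rho r_w$ and \emph{far} otherwise. For the near indices the disjoint balls $\B{\rho r_w/5}{y_i}$ all lie in a fixed tubular neighbourhood of the $(k-1)$-plane $L_w$, so a volume count bounds their number by $c(m)\rho^{1-k}$; the corresponding $\B{\rho r_w}{y_i}$ are added to the active family (or retired under the first alternative if $\rho r_w=r$), contributing active content $\le c(m)\rho^{1-k}(\rho r_w)^k=c(m)\rho\,r_w^k$. For a far index, every point of $\B{\rho r_w}{y_i}$ is at distance $>\rho r_w\ge\rho r_w/5$ from $L_w$, hence outside $\B{\rho r_w/5}{L_w}\supseteq F_w$; thus every $y\in\B{\rho r_w}{y_i}\cap\cS$ satisfies $\hat\theta(y,\rho r_w/10)<E-\delta$, which is exactly the energy drop at one tenth of the radius $\rho r_w$, so $\B{\rho r_w}{y_i}$ is retired under the second alternative. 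There are at most $c(m)\rho^{-m}$ far balls, of total content $\le c(m)\rho^{-m}(\rho r_w)^k=c(m)\rho^{k-m}r_w^k$.

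\textit{Termination and bookkeeping.} A bad ball inside $\B sz$ has radius $r_w\le s$, so new active balls have radius $\rho r_w\le\rho s$; hence after $i$ rounds the active balls have radius $\le\rho^iR$, and once this reaches $r$ every remaining ball is retired and the process stops. Write $M_i$ for the total $k$-content of the active balls entering round $i$, so $M_0\le C_V(m)R^k$. Summing the estimates above over the bad balls of round $i$ and using $\sum_w r_w^k\le C_V(m)s^k$ from Lemma~\ref{lemma_cover} gives $M_{i+1}\le c(m)C_V(m)\rho\,M_i\le\frac12 M_i$ and a round-$i$ retired content of at most $c(m)(1+\rho^{k-m})C_V(m)M_i$. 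Therefore $M_i\le 2^{-i}C_V(m)R^k$, and the final collection $\cC$ of all retired balls has total content $\le C_F(m)R^k$ with $C_F(m)=c(m)C_V(m)^2\rho^{k-m}$. By construction $\cC$ covers $\cS\cap\B R0$, every ball of $\cC$ has radius $\ge r$, and each satisfies one of the two alternatives.

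\textit{Main obstacle.} Everything new over Lemma~\ref{lemma_cover} is concentrated in the near/far split of the Vitali cover of a bad ball at the single common radius $\rho r_w$. This radius must be small enough relative to the tube width $\rho r_w/5$ that the far balls remain strictly outside $F_w$, so that they can be retired with a genuine $\hat\theta$-drop at their own scale using only monotonicity of $\hat\theta$; it must lose only the factor $\rho^{1-k}\cdot\rho^k=\rho$ in the near family, so that the active content contracts geometrically against the \emph{fixed} constant $C_V(m)$; and it must respect the minimal scale, i.e.\ $\rho r_w\ge r$ whenever $r_w>r$, which is exactly why it is essential that Lemma~\ref{lemma_cover} outputs bad balls only at $\rho$-adic scales. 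Making these three requirements compatible is the crux; everything else is a routine geometric-series estimate layered on top of Lemma~\ref{lemma_cover}.
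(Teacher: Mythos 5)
Your proposal is correct and follows essentially the same strategy as the paper: iterate Covering Lemma I over the bad balls, splitting each bad ball into balls near the $(k-1)$-plane $L_x$ (which feed back into the iteration with a factor $\rho$) and balls away from $L_x$ (which inherit the definite energy drop from the definition of $F_x$), then choose $\rho(m)$ so that the near-ball content contracts geometrically against the fixed constant $C_V(m)$. Your organization via a single Vitali cover with centers on $\cS$ and a near/far split is a minor (and slightly cleaner) reformulation of the paper's separate covers of $\B{\rho r_x}{F_x}$ and its complement, and the extra initial application of Lemma~\ref{lemma_cover} before the first split only shifts the bookkeeping by a harmless power of $C_V(m)$.
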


\begin{remark}
 As for the previous covering lemma, also in this case we can assume for simplicity and wlog that $R=1$. 
\end{remark}

\begin{proof}
 We need to refine the covering of the previous lemma. Recall that by lemma \ref{lemma_cover} we have a covering of $\cS\cap \B 1 0$ given by
\begin{gather}\label{eq_covering_start}
 \cS\cap \B 1 0 \subseteq \bigcup_{x\in \cC} \B {r}{x}\equiv \bigcup_{x\in \cC_r} \B {r}{x} \cup \bigcup_{x\in \cC_+} \B {r_x} {x} \quad \text{ with } \ \ r_x\geq r \ \ \text{ and}\ \ \sum_{x\in \cC_r\cup \cC_+} r_x^k\leq C_V(m)\, ,
\end{gather}
where we have set
\begin{gather}
\cC_r=\cur{x\in \cC \ s.t. \ \ r_x=r}\quad  \text{and}\quad \cC_+=\cur{x\in \cC \ \ s.t. \ \ r_x>r}\, , \quad \cC=\cC_r\cup \cC_+\, . 
\end{gather}
We will of course keep $\cC_r$ as part of our final covering, while we will refine the covering on each of the balls $\cur{\B {r_x}{x}}_{x\in \cC_+}$ in an inductive way. By item \eqref{it_c} of lemma \ref{lemma_cover}, for each $x\in \cC_+$ the set $F_x\equiv\cur{y\in \cS\cap \B {2r_x}{x} \ \ s.t. \ \ \theta(y,\rho r_i/10)>E-\delta }$ is close to a $k-1$-dimensional space.
Assuming that $F_x=\emptyset$, all we need to do in order to achieve \eqref{eq_Edrop} is to re-cover $\B {r_x}{x}$ with balls $\cur{\B {\rho r_x}{y}}_{y\in \cC^{(1,f)}_x}$. These balls are the final covering we are looking for. Evidently, the number of these balls is bounded by a constant $C_f(m,\rho)$.

If $F_x\neq \emptyset$, we need to exploit the fact that we still know $F_x\subseteq \B {\rho r_x/5}{L_x}\cap \B {2r_x}{x}$, where $L_x$ is at most $k-1$ dimensional. Thus we can cover $\B {r_x}{x}\setminus \B {\rho r_x}{F_x}$ as above, and cover $\B {\rho r_x}{F_x}$ separately by balls $\cur{\B {\rho r_x}{y}}_{y\in \cC^{(1,b)}_{x}}$.
On these ``bad balls'', we will not be able to obtain any information on the energy drop over these new balls in the covering. However, their $k$-dimensional content is small since $F_x$ behaves like a $k-1$ dimensional set.
This will allow us to start over on each of these bad balls separately, and keep a uniform $k$-dimensional estimate on the content of the final covering.  More precisely:

\subsubsection{Re-covering of bad balls: Induction}
In detail, we will build by induction on $i$ a sequence of coverings of $\cS\subseteq \cS^k_{\epsilon,\delta r}\cap \B 1 0$ such that
\begin{enumerate}
 \item For all $i=1,2,\cdots$
 \begin{gather}
  \cS\subseteq \bigcup_{x\in \cC^{(i,r)}} \B {r}{x}  \cup  \bigcup_{x\in \cC^{(i,f)}} \B {r_x}{x}\cup  \bigcup_{x\in \cC^{(i,b)}} \B {r_x}{x}\, .
 \end{gather}
 \item For all $x\in \cC^{(i,r)}$, $r_x=r$. In other words, on these ``$r$-balls'' option \eqref{it_rec_1} of our lemma is verified,
 \item For all $x\in \cC^{(i,f)}$ and all $z\in \B{2r_x}{x}$ we have $\hat \theta(z,r_x/10)\leq E-\delta$. In other words, on these ``final balls'' option \eqref{it_rec_2} of our lemma is verified, 
 \item \label{it_rho--} for all $x\in \cC^{(i,b)}$, $r<r_x \leq \rho^i$. On these ``bad balls'', none of the two stopping options is verified, thus we need to refine our covering here.
 \item For some constant $C_F(m)$, we have the estimates
\begin{gather}\label{eq_cov5}
 \sum_{x\in \cC^{(i,r)}\cup \cC^{(i,f)} } r_x^k \leq C_F(m)\ton{\sum_{j=1}^i 2^{-j} }\, ,\quad \sum_{x\in \cC^{(i,b)} }r_x^k \leq 2^{-i}\, .
\end{gather}
 Thus the estimates on $r$ and final balls has uniform bounds, while our estimates on bad balls has exponentially decreasing bounds.
\end{enumerate}

\subsubsection{Re-covering of bad balls: First step in the induction}
For $i=1$, consider the covering \eqref{eq_covering_start} given by the previous lemma. We keep the balls $\cur{\B{r_x}{x}}_{x\in \cC_r}$ as they are, while for each $x\in \cC_+$ consider two coverings of $\B {\rho r_x}{F_x}$ and its complement
\begin{gather}
 \B {r_x}{x}\setminus \B{\rho r_x}{F_x} \subseteq \bigcup_{y\in \cC_x^{(1,f)}} \B{\rho r_x}{y}\, , \quad \B {r_x}{x}\cap \B{\rho r_x}{F_x} \subseteq \bigcup_{y\in \cC_x^{(1,b)}} \B{\rho r_x}{y}\, ,
\end{gather}
where $\B {\rho r_x/2}{y}$ are pairwise disjoint in both coverings. 

By definition of $F_x$, for all $y\in \cC_x^{(1,f)}$ the energy drop condition \eqref{eq_Edrop} is satisfied. Moreover we have the trivial estimates
\begin{gather}
 \sum_{y\in \cC_x^{(1,f)}} (\rho r_x)^k = \ton{\rho r_x}^k \#\cur{y\in \cC_x^{(1,f)}}\leq c(m) \rho^{k-m} r_x^k \equiv C_f(m,\rho) r_x^k\, .
\end{gather}
Since the energy drop is verified on these balls, we define $\cC^{(1,f)}$ to be the set of final balls at the step $i=1$ by
\begin{gather}
 \cC^{(1,f)} = \bigcup_{x\in \cC_+} \cC_x^{(1,f)}\, .
\end{gather}

For $y\in \cC_x^{(1,b)}$, the energy drop condition is not verified. However, since there exists a $k-1$ dimensional space $L_x$ such that
\begin{gather}
  F_x = \cur{y\in \cS \cap \B {2r_x}{x} \ \ s.t. \ \ \hat \theta(y,\rho r_x/10)\geq E-\delta}\subseteq \B {\rho r_x /5}{L_x}\, ,
\end{gather}
then we can estimate 
\begin{gather}\label{eq_Cc}
 \sum_{y\in \cC^{(1,b)}_x} (\rho r_x)^k =\rho^k r_x^k \#\cur{\cC^{(1,b)}_x}\leq c(m) \rho^{1-k} \rho^k r_x^k \equiv C_c(m) \rho r_x^k\, .
\end{gather}
On these balls, we can either have the stopping condition $\rho r_x=r$, or we need to refine the covering further. Thus we define
\begin{gather}
 \cC^{(1,b)}=\bigcup_{x\in \cC_+\, ,\ \ \rho r_x >r} \cC^{(1,b)}_x\, , \quad \cC^{(1,r)}=\cC_r \cup \bigcup_{x\in \cC_+\, ,  \ \ \rho r_x =r}  \cC^{(1,b)}_x\, .
\end{gather}
$\cC^{(1,b)}$ represents the set of ``bad balls'' where we need to refine our covering further.

By this and lemma \ref{lemma_cover}, in particular by the estimates in \eqref{eq_covering_start}, we obtain that
\begin{gather}\label{eq_bad1}
  \sum_{y\in \cC^{(1,b)} }r_y^k\leq C_c(m)\rho\sum_{x\in \cC_+} r_x^k \leq C_V(m) C_c(m) \rho\, .
\end{gather}
If we choose
\begin{gather}\label{eq_rho}
 0<\rho(m) \leq \min\cur {100^{-1},\frac {1} 2 C_V(m)^{-1} \cdot C_c(k)^{-1}}\, ,
\end{gather}
we can rephrase the above estimates as
\begin{gather}\label{eq_bad2}
 \sum_{y\in \cC^{(1,b)} }r_y^k \leq \frac 1 2\, .
\end{gather}
If we set 
\begin{gather}\label{eq_CF}
 C_F(m) = 2 C_V(m)\ton{C_f(m,\rho(m))+C_c(m)}\, ,
\end{gather}
the estimates on the final and $r$-balls are
\begin{gather}\label{eq_rf}
 \sum_{y\in \cC^{(1,r)}\cup \cC^{(1,f)} } r_y^k =\#\cur{\cC_r} r^k + \sum_{x\in \cC_+} r_x^k \ton{C_f(m,\rho)+C_c(m,\rho)}\leq C_V(m)\ton{C_f(m,\rho)+C_c(m)}= \frac 1 2 C_F(m)\, .
\end{gather}

Note that clearly for all $y\in \cC^{(1,b)}$, we have $r_y\leq \rho$.

\subsubsection{Re-covering of bad balls: Induction step}

Suppose that we have obtained our covering for $i$. It is clear that we need to improve our covering only on the balls $\cur{\B {r_x}{x}}_{x\in \cC^{(i,b)}}$. In order to do so, we consider each of these balls separately.

Since all the assumptions on lemma \ref{lemma_cover} are satisfied on each of the $\B{r_x}{x}$, we can apply again this lemma to each $\B{r_x}{x}$, and obtain that for all $x$ there exists a covering
\begin{gather}\label{eq_covering_start_i}
 \cS\cap \B {r_x}{x} \subseteq \bigcup_{y\in \hat \cC_{r,x}} \B {r}{y}\cup \bigcup_{y\in \hat \cC_{+,x}} \B {r_y} {y} \quad \text{ with } \ \ r_y\geq r \ \ \text{ and}\ \ \sum_{y\in \hat \cC_{r,x}\cup \hat \cC_{+,x}} r_y^k\leq C_V(m)r_x^k\, .
\end{gather}
Moreover, for each $y\in \hat \cC_{+,x}$, there exists a $k-1$ dimensional subspace $L_y$ such that
\begin{gather}
 F_y\equiv\cur{z\in \cS\cap \B {2r_y}{y} \ \ s.t. \ \ \theta(z,\rho r_y/10)>E-\delta }\subseteq \B {\rho r_y/5}{L_y}\cap \B {2r_y}{y}\, .
\end{gather}
By applying exactly the same procedure described in the first step of the induction to each of the balls $\cur{\B {r_y}{y}}_{y\in \hat \cC_{+,x}}$, we obtain the new desired covering. In particular, for each $y\in \hat \cC_{+,x}$ we can find a covering
\begin{gather}
 \B {r_y}{y}\setminus \B{\rho r_y}{F_y} \subseteq \bigcup_{z\in \hat \cC_y^{(i+1,f)}} \B{\rho r_y}{z}\, , \quad \B {r_y}{y}\cap \B{\rho r_y}{F_y} \subseteq \bigcup_{z\in \cC_y^{(i+1,b)}} \B{\rho r_y}{z}\, ,
\end{gather}
where for all $z\in \hat \cC_y^{(i+1,f)}$ and all $p\in \cS\cap \B {2\rho r_y}{z}$, we have $\hat \theta(p,\rho r_y)\leq E-\delta$, and we have the estimates
\begin{gather}
 \sum_{z\in \hat \cC_y^{(i+1,f)}} (\rho r_y)^k \leq C_f(m,\rho) r_y^k\, , \quad \sum_{z\in \cC^{(i+1,b)}_y} (\rho r_y)^k \leq C_c(m) \rho r_y^k\, .
\end{gather}

The new set $\cC^{(i+1,f)}$ is now defined as the previous set of ``final balls'' $\cC^{(i,f)}$ along with the new final balls $\hat \cC^{(i+1,f)}$ obtained with this covering, thus making
\begin{gather}
 \hat \cC^{(i+1,f)} = \bigcup_{x\in \cC^{(i,b)}} \bigcup_{y\in \hat \cC_{+,x}}\hat \cC_y^{(i+1,f)}\, , \quad \cC^{(i+1,f)} = \cC^{(i,f)} \cup \hat \cC^{(i+1,f)}\, .
\end{gather}
In a similar way for the $r$-balls, we obtain
\begin{gather}
  \quad \hat \cC^{(i+1,r)}=\bigcup_{x\in \cC^{(i,b)}} \ton{\hat \cC_{r,x} \cup \bigcup_{y\in \hat \cC_{+,x}\, ,  \ \ \rho r_y =r}  \cC^{(i+1,b)}_y}\, , \quad \cC^{(i+1,r)} = \cC^{(i,r)} \cup \hat \cC^{(i+1,r)} \, .
\end{gather}
However, evidently the new set of ``bad balls'' does not contain the bad balls at the previous scale, since those are the ones that were just re-covered. In particular
\begin{gather}
 \cC^{(i+1,b)}=\bigcup_{x\in \cC^{(i,b)}} \bigcup_{y\in \cC_{+,x},\ \rho r_y >r} \cC^{(i+1,b)}_y\, .
\end{gather}

The $k$-dimensional content estimate of our covering are obtained by iterating the estimates obtained in the first step. In detail, by arguing as in \eqref{eq_bad1} and \eqref{eq_bad2}, and by choosing $\rho$ according to \eqref{eq_rho}, we obtain
\begin{gather}
 \sum_{z\in \cC^{(i+1,b)} }r_z^k \leq \sum_{x\in \cC^{(i,b)}} \frac 1 2 r_x^k = 2^{-1-i}\, .
\end{gather}
As for final and $r$-balls, arguing as in \eqref{eq_rf} we can estimate the contribution given by the new $r$ and final balls by
\begin{gather}
 \sum_{z\in \hat \cC^{(i+1,r)} \cup \hat \cC^{(i+1,f)}} r_z^k \leq \ton{\frac 1 2 C_F(m)}\sum_{z\in \cC^{(i,b)} } r_z^k = 2^{-i-1} C_F(m)\, .
\end{gather}
This yields the desired result \eqref{eq_cov5}, and in turn concludes the proof of the lemma.

It is worth noticing that at the $i$-th step of the induction, the radius of the biggest ball in the covering is smaller than $\rho^i$. Thus eventually $\rho^i \leq r$ and this induction will stop in a finite number of steps.

\end{proof}

\subsubsection{Keeping track of the constants}
%\textcolor{red}{There will be corrections to this subsection because the last one will be changed.}

For the reader's convenience we record here how all the constants involved in the previous two lemmas were chosen.

First of all, note that $\epsilon>0$ is arbitrary, as well as $r>0$. However, it is of course important that all the constants here are \textit{independent of} $r$.

$C_R(m)$ is the constant coming from the Reifenberg theorem \ref{th_disc_reif}, and it depends only on $m$. $C_V(m)$ is fixed in \eqref{eq_CV}, and it is just a dimensional constant $c(m)$ (coming from a rough cover of $\B 1 0$ by balls of radius $1/8$) times $C_R(m)$. Thus $C_V(m)$ clearly depends only on $m$. 
$C_c(m)$ is fixed in \eqref{eq_Cc}, and is just another covering constant whose value depends only on $m$.

The parameter $\rho$, which was a free parameter in the first covering, is fixed once and for all in \eqref{eq_rho} as a constant depending only on $m$. For convenience, we can also pick a $\rho$ satisfying \eqref{eq_convenience}. Once this choice has been fixed, also the constant $C_F(m)$ introduced in \eqref{eq_CF} depends only on $m$.

The parameter $\eta>0$ is chosen according to \eqref{eq_eta_last}, \eqref{eq_eta0} and \eqref{eq_eta1}, as explained in \eqref{eq_eta}. At last, with this positive value of $\eta$ fixed, we choose $\delta$ in such a way that \eqref{eq_da}, \eqref{eq_db} and \eqref{eq_eta_last} are all satisfied.

\subsection{Proof of the main theorems}
Before proving our main theorems, we provide an argument that justifies the assumption $\cF<\delta$ which is present in all of our technical lemmas and covering arguments. The idea is that by condition \eqref{eq_f}, we can focus on small enough scales $r$ on which the value of $\cF$ has ``decayed'' by a factor $r^\gamma$.

\subsubsection{First covering by balls of small radius}\label{sec_delta}
 In all the estimates we need, an important assumption is that the constant $\cF$ in \eqref{eq_f} is sufficiently small, in other words our estimates apply if $u$ is an approximate harmonic map and the error $f$ is small enough. This assumption is not too restrictive because \eqref{eq_f} is better than scale invariant in nature. Indeed, if we restrict ourselves to small enough scales $r\leq r_0$, the rescaled maps $T^u_{x,r}:\B 3 0 \to N$ are approximate harmonic maps solving \eqref{eq_T1} and \eqref{eq_T2} with \eqref{eq_Tf}, and the error function $\tilde f$ satisfies
 \begin{gather}
  s^{4-m}\int_{\B y s} \abs{\tilde f}^2 \leq (\cF r_0^\gamma) s^\gamma
 \end{gather}
 for all $y\in \B 1 0$ and $s\leq 1$. Thus, if we choose $r_0(m,\cF,\gamma,\delta)$ in such a way that $\cF r_0^\gamma\leq \delta$, we can guarantee the smallness hypothesis $\cF<\delta$ on all smaller scales. 

 We can cover the original ball $\B 1 0$ with balls $\B {r_0/2}{x_i}$ such that $\B{r_0/4}{x_i}$ are disjoint, and then start over on each of this smaller balls. Evidently, we have
 \begin{gather}
  \sum_i r_0^k \leq C(m) r_0^{k-m}\leq C_0(m,\cF,\gamma,\delta)\, .
 \end{gather}
 Since we will pick the parameter $\delta$ from the covering Lemma \ref{lemma_coverII}, where we have $\delta=\delta(m,\KN,\Lambda,\gamma,\epsilon)$, and since we are only doing this rough covering once on the first big ball, the final estimates of Theorem \ref{th_mink} are not modified by this. Also the statements about the rectifiability (which is stable under countable unions, let alone finite unions) is not effected by this covering.
 
Now we are in a position to prove our main theorems. We start with the volume estimates of Theorem \ref{th_mink}.

\subsubsection{Proof of Theorem \ref{th_mink}}
This proof is basically a corollary of the covering Lemma \ref{lemma_coverII}. 

By the argument in Section \ref{sec_delta}, we can assume that $\cF<\delta$ throughout this proof. Moreover, in a similar spirit, instead of the estimates of \eqref{eq_mink}, we will prove the slightly less powerful estimate
\begin{gather}\label{eq_mink_lesspower}
\Vol\ton{\B r {\cS^k_{\epsilon,\delta r}(u) } \cap \B 1 0 }\leq C_\epsilon' r^{n-k}\, ,
\end{gather}
the difference being the $\delta r $ in $\cS^k_{\epsilon,\delta r}$. As above, since $\delta=\delta(m,\KN,\Lambda,\gamma,\epsilon)$, this does not affect the final estimate in \eqref{eq_mink}, if not by enlarging the constant $C_\epsilon'$ to $C_\epsilon$.

Consider the set $\cS=\cS^{k}_{\epsilon,\delta r}(u)\cap \B 1 0$. Note that by the monotonicity of $\hat \theta$ and the estimates in Lemma \ref{lemma_monotone}, we have the uniform bounds
\begin{gather}
\forall x\in \B 1 0\, , \ \ \forall r\in [0,1]\, , \quad \hat \theta(x,r)\leq \Lambda'=c(m)\Lambda + c(m,\gamma) \cF \, .
\end{gather}
Let $E=\sup_{x\in \cS} \hat \theta(x,1)  \leq \Lambda'$.

\subsubsection{Induction on energy upper bounds}

Using the covering Lemma \ref{lemma_coverII}, we will prove by induction on $i=0,1,\cdots,\lfloor \delta^{-1} E \rfloor +1 $ that there exist coverings of $\cS$ by balls $\cur{\B{r_x}{x}}_{x\in \cC^i}$ such that
\begin{gather}\label{eq_final_cov_1}
 \cS\subseteq \bigcup_{c\in \cC^i} \B {r_x}{x}\, , \quad \sum_{x\in \cC^i} r_x^k \leq (c(m)C_F(m))^i\, .
\end{gather}
Moreover, for all $i$ we have
\begin{gather}\label{eq_final_cov_2}
 r_x \leq r \quad \text{ or } \quad \forall y\in \cS \cap \B {2r_x}{x}\, , \ \ \hat \theta(y,r_x)\leq E- i \delta\, .
\end{gather}
It is clear that if we pick $i = \lfloor \delta^{-1} E \rfloor +1$, then the second condition cannot be true anywhere, which means that the first condition must be true, which will complete the construction of the covering. %\textcolor{red}{Why add the next line, isn't is adding complication?}To be precise, we may even stop when $E-i\delta\leq \epsilon_0$, where $\epsilon_0$ is the parameter in theorem \ref{th_app_eps_reg}. This is because by the $\epsilon$-regularity theorem, if $\hat \theta(x,2r)<\epsilon_0$, then $\cS\cap \B {r}{x}=\emptyset$. However, for the purposes of this covering, we can ignore this extra stopping condition.

Now it is clear that the estimate \eqref{eq_mink_lesspower} is true with
\begin{gather}
 C_\epsilon' (m,\KN,\Lambda,\cF,\gamma,\epsilon)= 2^k(c(m)C_F(m))^{\lfloor \delta^{-1} E \rfloor +1}\, .
\end{gather}

So the only thing left to do is to prove the properties of this inductive covering in \eqref{eq_final_cov_1}. Note that this covering is trivial for $i=0$, since $\cS \subseteq \B 1 0$ does the trick at this stage.

By induction, suppose that \eqref{eq_final_cov_1} and \eqref{eq_final_cov_2} are true for $i$. Pick any $x\in \cC^i$, and consider $\B {r_x}{x}$. By the covering lemma \ref{lemma_coverII} (or better, by an $r_x$-rescaled version of this lemma), there exists a covering $\hat \cC_x$ of $\cS \cap \B {r_x}{x}$ such that
\begin{gather}
 \cS \cap \B {r_x}{x} \subseteq \bigcup_{y\in \hat \cC_x} \B{r_y}{y}\, , \quad r_y\leq \rho r_x\leq \rho^i \, , \quad \sum_{y\in \hat \cC_x} r_y^k \leq C_F(m) r_x^k\, .
\end{gather}
Moreover, for all $y\in \hat \cC_x$, we have
\begin{gather}
 \text{either} \ r_y=r \ \ \ \ \text{or} \ \ \  \ \forall z\in \cS \cap \B {2r_y}{y}\, , \ \  \hat \theta(z,\rho r_y/10)\leq E-i\delta - \delta = E-(i+1)\delta\, .
\end{gather}
By covering each $\B {r_y}{y}$ again by a minimal set of balls of radius $\rho(m) r_y \leq r_y/10$, we obtain a covering $\cC_x$ such that
\begin{gather}
 \cS \cap \B {r_x}{x} \subseteq \bigcup_{y\in \cC_x} \B{r_y}{y}\, , \quad r_y\leq \rho r_x\leq \rho^i \, , \quad \sum_{y\in \cC_x} r_y^k \leq c(m)C_F(m) r_x^k\, .
\end{gather}
Moreover, for all $y\in \cC_x$, we have
\begin{gather}
 \text{either} \ r_y\leq r \ \ \ \ \text{or} \ \ \  \ \forall z\in \cS \cap \B {2r_y}{y}\, , \ \  \hat \theta(z,\rho r_y)\leq E-i\delta - \delta = E-(i+1)\delta\, .
\end{gather}

By summing all the contributions coming from balls $\cur{\B{r_x}{x}}_{x\in \cC^{i}}$, we obtain
\begin{gather}
 \cC^{i+1} =\bigcup_{x\in \cC^i} \cC_x\, , \quad \sum_{y\in \cC^{i+1}} r_y^k =\sum_{x\in \cC^i } \ton{\sum_{y\in \cC_x} r_y^k}\leq (c(m)C_F(m))^{i+1}\, ,
\end{gather}
as desired.

\subsubsection{Proof of Theorem \ref{th_rect} and rectifiability of \texorpdfstring{$\cS^k_\epsilon$}{the quantitative strata}}
By countable additivity, the rectifiability of $\cS^k(u)$ is a corollary of the rectifiability of $\cS^k_\epsilon(u)$ for all $\epsilon>0$. 

By the volume estimates in \eqref{eq_mink}, we have $\lambda^k \ton{\cS^k_\epsilon \cap \B 1 0} \leq C_\epsilon$. By applying the same estimates on any ball $\B r x $ with $x\in \B 1 0$ and $r\leq 1$, we obtain that
\begin{gather}\label{eq_alreg}
 \lambda^k\ton{\cS^k_\epsilon \cap \B r x} \leq C_\epsilon r^k\, ,
\end{gather}
in other words, $\cS^k_\epsilon$ is upper-\al regular.

We will prove that for all measurable subsets $\cS\subseteq \cS^k_\epsilon\cap \B 1 0$, there exists a $k$-measurable subset $E\subset \cS$ with $\lambda^k(E)\leq 7^{-1}\lambda^k(\cS)$ such that $\cS\setminus E$ is $k$-rectifiable. Since $\cS$ is an arbitrary measurable subset, this is enough to prove rectifiability by a standard density argument.

Consider any $\cS\subseteq \cS^k_\epsilon \cap \B 1 0$. We can assume wlog that $\lambda^k(\cS)>0$, otherwise there is nothing to prove. Consider the function $f(x,r)=\hat \theta(x,r)-\hat \theta(x,0)$ on $\B 1 0$. This function is monotone nondecreasing in $r$, uniformly bounded for all $x\in \B 1 0$ and $r\leq 1$, and pointwise converging to $0$ as $r\to 0$.

Thus, by dominated convergence, for all $\delta>0$, there exists a radius $\bar r >0$ such that
\begin{gather}
 \fint_{\cS} f(x,10 \bar r)d\lambda^k(x) \leq \delta^2\, .
\end{gather}
Let $E\subset \cS$ be a measurable subset with $\lambda^k(E)\leq \delta \lambda^k(\cS)$ and such that $f(x,10 \bar r)\leq \delta$ for all $x\in F\equiv \cS\setminus E$.

Now cover $F$ by a finite number of balls $\B {\bar r}{x_i}$ centered on $F$. We want to show that, if $\delta$ is chosen small enough, then on each of these balls we can apply Theorem \ref{th_rect_reif} to $F\cap \B {\bar r}{x_i}$ for all $i$, and thus proving that $F$ is $k$-rectifiable as desired.

\paragraph{Reifenberg estimates}
The estimates here are basically equivalent to the estimates carried out in Section \ref{sec_volest}. Actually, since we already know that \eqref{eq_alreg} holds, we do not even need the upper induction part of that argument. For this reason, we will only sketch the main passages in the estimates.

Fix any $i$, and consider the set $F\cap \B {\bar r}{x_i}$. For convenience, we rescale the ball $\B {\bar r }{x_i}$ to $\B 1 0$. With an abuse of notation, we will keep denoting by $u$, $\hat \theta$, $\cS^k_\epsilon$ and $F$ also the rescaled objects.

By definition of $F\subset \cS^k_{\epsilon}$, we have that $\hat \theta(x,10)-\hat\theta(x,0)\leq \delta$ for all $x\in F$. By an estimate analogous to \eqref{eq_vol1}, we have for all $x\in F$ and $s\leq 1$
\begin{gather}
 \beta_{2,\lambda^k|_F} (x,s)^2 \leq C_1 s^{-k}\int_{\B {s}{x}} W_s (y)\,d\lambda^k|_F (y)
\end{gather}
By integrating, and by \eqref{eq_alreg}, we obtain for all $x\in \B 1 0$ and $s\leq r \leq 1$:
\begin{gather}
 \int_{\B r x} \beta_{2,\lambda^k|_F} (z,s)^2\, d\lambda^k|_F(z) \leq C_1 s^{-k}\int_{\B r y} \qua{\int_{\B {s}{z}} W_s (t)\, d\lambda^k|_F(t)} d\lambda^k|_F(z) \leq C_1C_\epsilon \int_{\B {r+s} y } W_s (z)d\lambda^k|_F (z)\, .
\end{gather}
Integrating again in $s$, we finally get for all $x\in \B 1 0$ and $r\leq 1$:
\begin{gather}
 \int_{\B r x} \qua{\int_0^s  \beta_{2,\lambda^k|_F} (z,s)^2\,\frac {ds}{s}} d\lambda^k|_F(z)\leq C_1C_\epsilon \int_{\B {2r} x} [\hat \theta(x,8r)-\hat \theta(x,0) ]d\lambda^k|_F(z)\leq c(m) C_1C_\epsilon^2 \delta r^k\, .
\end{gather}
By choosing
\begin{gather}
 \delta \leq \frac{\delta_R^2}{c(m) C_1 C_\epsilon^2}\, ,
\end{gather}
we can apply Theorem \ref{th_rect_reif} to the set $F\cap \B 1 0$, thus proving that it is $k$-rectifiable. This concludes the proof.

\bibliographystyle{aomalpha}
\bibliography{../qstrat}
\vspace{.5 cm}

\end{document}